\providecommand\@dotsep{5}
\def\listtodoname{List of Todos}
\def\listoftodos{\@starttoc{tdo}\listtodoname}
\newcommand{\e}{\varepsilon}
\newcommand{\eps}{\varepsilon}
\newcommand{\C}{\mathbb{C}}
\newcommand{\R}{\mathbb{R}}
\newcommand{\RN}{{\mathbb{R}^N}}
\newcommand{\RD}{{\mathbb{R}^2}}
\newcommand{\RT}{{\mathbb{R}^3}}
\newcommand{\RQ}{{\mathbb{R}^4}}
\newcommand{\de}{\partial}
\newcommand{\weakto}{\rightharpoonup}
\renewcommand{\le}{\leslant}
\renewcommand{\ge}{\geslant}
\renewcommand{\a }{\alpha }
\renewcommand{\b }{\beta }
\newcommand{\g }{\gamma }
\newcommand{\n }{\nabla }
\newcommand{\s }{\sigma }
\renewcommand{\t}{\theta}
\newcommand{\F}{{\mathcal F}}
\newcommand{\cd}{D^2}
\newcommand{\Hd}{H^2(\RN)}
\newcommand{\cH}{\mathcal{H}}
\newcommand{\W}{\mathcal{W}}
\newcommand{\M}{\mathcal{M}}
\newcommand{\N}{\mathbb{N}}
\renewcommand{\C}{\mathbb{C}}
\renewcommand{\o}{\omega}
\newcommand{\D }{{\mathcal D}^{1,2}(\RN)}
\newcommand{\irn }{\int_{\RN}}
\def\bbm[#1]{\mbox{\boldmath $#1$}}
\newcommand{\beq }{\begin{equation}}
\newcommand{\eeq }{\end{equation}}
\renewcommand{\le}{\leqslant}
\renewcommand{\ge}{\geqslant}
\newcommand{\cD}{\mathcal D}
\newcommand{\cU}{\mathcal{U}}
\newcommand{\fd}{\mathfrak{d}}
\newcommand{\fp}{\mathfrak{p}}
\newcommand{\fii}{\mathfrak{i}}
\def\br#1\er{\textcolor{red}{#1}} 
\def\bl#1\el{\textcolor{blue}{#1}} 
\numberwithin{equation}{section}
\newtheorem{theorem}{Theorem}[section]
\newtheorem{lemma}[theorem]{Lemma}
\newtheorem{proposition}[theorem]{Proposition}
\newtheorem{remark}[theorem]{Remark}
\newtheorem{corollary}[theorem]{Corollary}
\newtheorem*{theorem*}{Theorem} 
\newcommand{\B}{\mathbb{B}}
\newcommand{\cO}{\mathcal{O}}
\title[General mixed dispersion NLS equation]{Radial and non-radial multiple solutions to a general mixed dispersion NLS equation}
\author[P. d'Avenia]{Pietro d'Avenia}
\author[A. Pomponio]{Alessio Pomponio}
\author[J. Schino]{Jacopo Schino}
\address[P. d'Avenia and A.  Pomponio]{\newline\indent
	Dipartimento di Matematica, Meccanica e Management
	\newline\indent 
	Politecnico di Bari
	\newline\indent 
	Via Orabona 4, 70125, Bari, Italy
}
\email{\href{pietro.davenia@poliba.it}{pietro.davenia@poliba.it}}
\email{\href{alessio.pomponio@poliba.it}{alessio.pomponio@poliba.it}}
\address[J. Schino]{\newline\indent
	Department of Mathematics
	\newline\indent 
	North Carolina State University
	\newline\indent 
	2311 Stinson Drive, 27607, Raleigh, NC, USA
	\newline\indent
	and
	\newline\indent
	Institute of Mathematics
	\newline\indent
	Polish Academy of Sciences
	\newline\indent
	ul. \'Sniadeckich 8, 00-656, Warsaw, Poland
}
\email{\href{mailto:jschino@ncsu.edu}{jschino@ncsu.edu}}
\subjclass[2010]{35J35, 35J91, 35Q60.}
\keywords{Bilaplacian, mixed-dispersion Schr\"odinger equation, standing wave solutions, multiple solutions, positive mass case, zero mass case, radial and non-radial solutions.}
\begin{document}

\begin{abstract}
We study the following nonlinear Schr\"odinger equation with a fourth-order dispersion term
\[
\Delta^2u-\beta\Delta u=g(u) \quad \text{in } \RN
\]
in the positive and zero mass regimes: in the former, $N\ge2$ and $\beta > -2\sqrt{m}$, where $m>0$ depends on $g$; in the latter, $N\ge3$ and $\beta>0$. In either regimes, we find an infinite sequence of solutions under rather generic assumptions about $g$; if $N=2$ in the positive mass case, or $N=4$ in the zero mass case, we need to strengthen such assumptions. Our approach is variational.
\end{abstract}

\maketitle


\section{Introduction}

Let $N\ge2$ and consider the following nonlinear Schr\"odinger equation with a fourth-order dispersion term
\begin{equation}\label{bilap}
\Delta^2u-\beta\Delta u=g(u) \quad \text{in } \RN,
\end{equation}
where
$g\colon\R\to\R$ satisfies 
\begin{enumerate}[label=(g\arabic{*}),ref=g\arabic{*}]
	\item \label{g1} $g$ is continuous and odd;
	
	\item \label{g2} $\displaystyle-\infty<\liminf_{s\to0}\frac{g(s)}{s}\le\limsup_{s\to0}\frac{g(s)}{s}=:-m<0$;
	
	\item\label{g3}  $\displaystyle\lim_{s\to+\infty}\frac{g(s)}{s^{2^{**}-1}}=0$  if $N\ge5$, $\displaystyle\lim_{s\to+\infty}\frac{g(s)}{e^{\alpha s^{2}}}=0$ for every $\alpha>0$ if $N=4$, with
	\[
2^{**}=
	\begin{cases}
	\displaystyle \frac{2N}{N-4} & \text{ if } N\ge5,\\
	+\infty & \text{ if } 2\le N \le 4;
	\end{cases}
	\]
	\item \label{g4} there exists $s_0\ne0$ such that $G(s_0)>0$, where $\displaystyle G(s):=\int_0^sg(t)\,dt$;
\end{enumerate}
and $$\beta>-2\sqrt{m}.$$
This kind of assumptions has been introduced in \cite{BL,BL2} for the study of the equation $-\Delta u = g(u)$. In particular, hypothesis \eqref{g2} corresponds to the so-called \emph{positive mass case}. 
\\
For $N\ge3$, we consider also the \emph{zero mass case}, i.e., when
\[
\lim_{s \to 0} \frac{g(s)}{s} = 0.
\]
Nevertheless, unlike \cite{BL,BL2}, the presence of two differential operators in \eqref{bilap} makes the assumptions about the behaviour of $g$ at the origin non-univocal, therefore we consider two sub-cases in place of \eqref{g2}:
\begin{enumerate}[label=(g\arabic{*}'),ref=g\arabic{*}']
\setcounter{enumi}{1}
\item \label{g2''} $\displaystyle -\infty < \liminf_{s\to0}\frac{g(s)}{|s|^{2^*-2}s} \le \limsup_{s\to0}\frac{g(s)}{|s|^{2^*-2}s} =: -\ell < 0$
\end{enumerate}
or
\begin{enumerate}[label=(g\arabic{*}''),ref=g\arabic{*}'']
\setcounter{enumi}{1}
\item \label{g2'} $\displaystyle \lim_{s\to0}\frac{g(s)}{|s|^{2^*-1}} = 0$,
\end{enumerate}
with $2^* = 2N/(N-2)$. 
\\
Observe that in this case $m=0$ and so we are requiring $\b >0$.
\\
We remark also that in both the positive and the zero mass case, there is no growth assumption about $g$ at infinity whenever $N < 4$.

Let us spend a few words about classical motivations to deal with \eqref{bilap}. 
\\
In the study of the nonlinear Schr\"odinger equation
\begin{equation}\label{NLS}
\mathrm{i} \de_t\psi + \Delta\psi + |\psi|^{2\s}\psi = 0, \quad \psi \colon \R \times \RN \to \C,
\end{equation}
results concerning blow-up vs. global existence and instability vs. stability appear and they depend strongly on the dimension and the nonlinearity (see, for example, \cite{Cazenave}).

%

To enlarge the range of $N$ and $\s$ for the solutions  to exist globally in time, in \cite{Karpman91,Karpman,KS} an additional fourth-order term is proposed in \eqref{NLS}, obtaining the nonlinear \emph{mixed dispersion} Schr\"odinger equation
\begin{equation}\label{MD}
\mathrm{i} \de_t\psi - \gamma \Delta^2 \psi + \Delta\psi + |\psi|^{2\s}\psi = 0, \quad \psi \colon \R \times \RN \to \C
\end{equation}
with $\gamma>0$.  
\\
Results in this direction can be found in  \cite{BouLen,FIP,Pausader}.


If \emph{standing wave solutions} to \eqref{MD}, i.e., solutions of the form $\psi(x,t) = e^{\mathrm{i}\mu t}u(x)$ for some $\mu\ge 0$ and $u \colon \RN \to \R$, are looked for, then one obtains \eqref{bilap} with $\beta = 1/\gamma$ and $g(s) = (|s|^{2\s}s - \mu s)/\g$. We recall that standing wave solutions are usually called \emph{waveguide solutions} in nonlinear optics, a major field of application of \eqref{NLS} and \eqref{MD}.

A different physical derivation for \eqref{MD} appears in \cite{FIP}. 
In nonlinear optics, it is well known that equation \eqref{NLS} can be drawn from the nonlinear Helmholtz equation
separating the fast oscillations from the slowly varying amplitude, changing the nondimensional variables, and using the {\em paraxial approximation}, which consists of neglecting some {\em small} terms.\\
The aforementioned  blow-up results for \eqref{NLS}, together with the fact that numerical simulations and asymptotic analysis of the nonlinear Helmholtz equation suggest that nonparaxiality arrests the blow-up, lead to taking into account the neglected terms. Following the standard numerical approach, a biharmonic term appears as part of the nonparaxial correction. In the end, we obtain \eqref{MD}. For more details, see \cite[Section 2]{FIP}.

This regularizing effect of the additional term $\gamma \Delta^2\psi$ appears clearly also in the Bopp--Podolsky theory (see \cite{B40,Pob42}). In this context, if $\delta_0$ is the Dirac delta function centred at the origin, the Poisson equation in $\RT$
\[
- \Delta \psi = 4 \pi \delta_0
\]
becomes
\[
 \gamma \Delta^2 \psi - \Delta \psi = 4 \pi \delta_0.
\]
The fundamental solution to the former
is $\F_1(x) := |x|^{-1}$, which is singular at $0$ and 
\[
 \int_{\RT} |\nabla \F_1|^2 \, dx = +\infty,
\]
while  the fundamental solution to the latter, instead, is $\F_2(x) := |x|^{-1}(1-e^{-|x|/\sqrt{\gamma}})$, which satisfies $\lim_{x \to 0} \F_2(x) = 1/\sqrt{\gamma}$ and
\[
\int_{\RT} \left[ \gamma(\Delta \F_2)^2  + |\nabla \F_2|^2 \right] \, dx < +\infty
\]
(see e.g. \cite[Section 3]{DS} for details).

\

Solutions to \eqref{bilap} can be found as critical points of the $C^1$ functional 
\[
I(u)  =\frac12\int_{\RN}[(\Delta u)^2+\beta|\nabla u|^2]\,dx-\int_{\RN}G(u)\,dx,
\]
with $I\colon H^2(\RN)\to\R$ in the positive mass case, $I\colon \cd(\RN)\to\R$ in the zero mass case, where $\cd(\RN)$ is the completion of $C_c^\infty(\RN)$ with respect to the norm
\[
\|u\|_{\cd} =\left(\|\Delta u\|_2^2+ \|\n u\|_2^2\right)^\frac{1}{2}.
\]

Since \eqref{bilap} is set in the whole $\RN$, we consider some symmetries in order to recover compactness. To this aim, let us recall from \cite[Definition 1.22]{Willem_Book} (see also \cite{BW,Lions82}) that a subgroup $\cO \subset \cO(N)$ is called \emph{compatible} with $\RN$ if and only if there exists $r>0$ such that
\[
\lim_{|y|\to+\infty} \mathfrak{m}(y,r) = +\infty,
\]
where $\cO(N)$ is the orthogonal group of order $N$ over $\R$ and
\[
\mathfrak{m}(y,r) := \sup\bigl\{n\ge1 : \exists \{g_i\}_{i=1}^n \subset \cO \text{ such that } i \ne j \Rightarrow B(g_iy,r) \cap B(g_jy,r) = \emptyset \bigr\}.
\]
For example, if $N = \sum_{i=1}^n N_i$ for some $n\ge1$ and $N_i\ge2$ integers, then $\cO := \prod_{i=1}^n \cO(N_i)$ is compatible with $\RN$. In particular, one can take $n=1$ and $N_1 = N$ and obtain $\cO = \cO(N)$; one can also take $n=2$ and $N_1 = N_2 = N/2$ if $N\ge4$ is even, or $n=3$, $N_1 = N_2 \le N/2 - 1$, and $N_3 = N - 2N_1$ if $N\ge6$.
\\
If $\cO$ is a subgroup of $\cO(N)$ compatible with $\RN$, we define $H^2_\cO(\RN)$ (resp. $\cd_\cO(\RN)$ when $N\ge3$) as the subspace of $\Hd$ (resp. $\cd(\RN)$) of $\cO$-invariant functions.

In order to find non-radial solutions when $N=4$ or $N\ge6$, according to the notations above we consider $n=2$ and
\[
X := \left\{ u\in\cd(\RN) : u(x_1,\dots,x_{N/2},x_{N/2+1},\dots,x_N) = -u(x_{N/2+1},\dots,x_N,x_1,\dots,x_{N/2}) \right\}
\]
if $N$ is even, or $n=3$ and
\[
X := \left\{ u\in\cd(\RN) :
\begin{array}{l}
u(x_1,\dots,x_{N_1},x_{N_1+1},\dots,x_{2N_1},x_{2N_1+1},\dots,x_N)\\
= -u(x_{N_1+1},\dots,x_{2N_1},x_1,\dots,x_{N_1},x_{2N_1+1},\dots,x_N)
\end{array} 
\right\}
\]
if $N\ge6$, and define $H_X^2(\RN) := H_\cO^2(\RN) \cap X$ (resp. $\cd_X(\RN) := \cd_\cO(\RN) \cap X$), where $\cO = \cO(N/2) \times \cO(N/2)$ in the former case and $\cO = \cO(N_1) \times \cO(N_1) \times \cO(N_3)$ in the latter. It is clear that $X \cap \cd_{\cO(N)}(\RN) = \{0\}$.

For simplicity, when there is no risk of misunderstanding, we introduce the notations
\begin{equation*}
\cH := H_{\cO(N)}^2(\RN) \quad \text{or} \quad \cH := H_X^2(\RN)
\end{equation*}
and
\begin{equation*}
\cD := \cd_{\cO(N)}(\RN) \quad \text{or} \quad \cD := \cd_X(\RN),
\end{equation*}
the right-hand ones provided $N=4$ or $N\ge6$. This means that, whenever a statement is made for $\cH$ (resp. $\cD$), it holds both for $H_{\cO(N)}^2(\RN)$ (resp. $\cd_{\cO(N)}(\RN)$) and, if $N=4$ or $N\ge6$, for $H_X^2(\RN)$ (resp. $\cd_X(\RN)$).

As it is well known, we can work in such subspaces of $H^2(\RN)$ or $\cd(\RN)$ and still find solutions to \eqref{bilap} in virtue of the principle of symmetric criticality \cite{Palais}.

Now we state our results, beginning with the positive mass regime.
\begin{theorem}\label{th:main}
Assume that $N\ge3$ and \eqref{g1}--\eqref{g4} hold. Then there exists a sequence $\{u_n\} \subset \cH$ of solutions to \eqref{bilap} such that $I(u_n) \to +\infty$ as $n \to +\infty$.
\end{theorem}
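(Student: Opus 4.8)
\textit{Proof strategy.} The plan is to obtain $\{u_n\}$ as critical points of $I$ on $\cH$ by a $\mathbb{Z}_2$-symmetric fountain-type minimax, the two structural difficulties being the absence of an Ambrosetti--Rabinowitz condition on $g$ (so Palais--Smale sequences need not be bounded) and the loss of compactness proper to problems on $\RN$.

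\emph{Setting and compactness from symmetry.} First I would record that $I\in C^1(\cH)$ and that, by \eqref{g1}, $G$ is even, so $I$ is even with $I(0)=0$. Although the quadratic form $u\mapsto\irn[(\Delta u)^2+\beta|\n u|^2]\,dx$ is indefinite when $\beta<0$, the condition $\beta>-2\sqrt m$ makes the Fourier symbol $|\xi|^4+\beta|\xi|^2+m$ bounded below by a positive multiple of $(1+|\xi|^2)^2$; hence, by Plancherel, $u\mapsto(\irn[(\Delta u)^2+\beta|\n u|^2+mu^2]\,dx)^{1/2}$ is a norm on $\Hd$ equivalent to the standard one, which I will use on $\cH$. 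The compatibility of $\cO$ with $\RN$ then gives the compact embedding $\cH\hookrightarrow L^p(\RN)$ for $2<p<2^{**}$ (the $H^2$-analogue of the compactness lemma for symmetric functions); when $N=4$ the relevant compactness is into Orlicz spaces of exponential type through an Adams--Trudinger--Moser inequality on $H^2(\RQ)$, while $N=3$ is entirely subcritical.

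\emph{Minimax geometry.} Set $H(s):=G(s)+\tfrac m2 s^2$. By \eqref{g2}--\eqref{g3}, for every $\e>0$ one has $|H(s)|\le\e s^2+C_\e|s|^{2^{**}}$, which together with the norm above yields $I(u)\ge\alpha>0$ on a small sphere $\{\|u\|=\rho\}$ of $\cH$. Fixing an increasing family of finite-dimensional subspaces $Y_k\subset\cH$ with complementary ``tails'' $Z_k$, assumption \eqref{g4} --- which, applied to the dilations $u(\cdot/t)$, forces $\irn G(u)\,dx$ to become large and positive --- gives $I\to-\infty$ on each $Y_k$, while $\sup\{\|u\|_{L^p}:u\in Z_k,\ \|u\|=1\}\to0$ (for a subcritical $p$) gives $\inf\{I(u):u\in Z_k,\ \|u\|=\rho_k\}\to+\infty$ for a suitable $\rho_k\to+\infty$. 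Thus $I$ has the fountain geometry and its minimax levels $c_k$ are well defined with $c_k\to+\infty$.

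\emph{Compactness of the minimax sequences and conclusion.} To prove that each $c_k$ is a critical value I would produce, at level $c_k$, a Cerami sequence $(u_n^k)_n$ that additionally satisfies the Pohozaev identity asymptotically, $P(u_n^k)\to0$, where $P(u):=\tfrac{N-4}{2}\|\Delta u\|_2^2+\tfrac{(N-2)\beta}{2}\|\n u\|_2^2-N\irn G(u)\,dx$. This is achieved by applying the same minimax scheme to the scaling-augmented functional $\widetilde I(\theta,u):=I(u(e^{-\theta}\cdot))$ on $\R\times\cH$, a Cerami sequence of which satisfies $\partial_\theta\widetilde I\to0$ and, after undoing the dilation, provides the desired Cerami sequence for $I$ at level $c_k$ with $P\to0$. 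Combining $I(u_n^k)\to c_k$, $\langle I'(u_n^k),u_n^k\rangle\to0$ and $P(u_n^k)\to0$ gives $2\|\Delta u_n^k\|_2^2+\beta\|\n u_n^k\|_2^2\to Nc_k$; using $m>0$, the behaviour of $H$ near $0$ and near $\infty$ from \eqref{g2}--\eqref{g3}, and the compact embeddings, one deduces boundedness of $(u_n^k)_n$ in $\cH$ and then, after excluding the loss of compactness, $u_n^k\to u^k$ strongly. Hence $I'(u^k)=0$ and $I(u^k)=c_k$, and by the principle of symmetric criticality \cite{Palais} each $u^k$ solves \eqref{bilap}; relabelling $\{u^k\}$ as $\{u_n\}$ yields $I(u_n)\to+\infty$.

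\emph{Main obstacle.} I expect the hard part to be the boundedness of the Cerami--Pohozaev sequences in this positive-mass, ``almost critical'' regime: the symmetry gives compactness only away from $L^2$, and the Pohozaev balance controls just $2\|\Delta u_n^k\|_2^2+\beta\|\n u_n^k\|_2^2$, which fails to be coercive when $\beta<0$, so one must genuinely exploit $m>0$ and the control of $G$ at the origin. A further complication, peculiar to $N=4$, is that the critical exponential growth allowed by \eqref{g3} forces the use of Adams--Trudinger--Moser estimates both in the geometry step and in the compactness step.
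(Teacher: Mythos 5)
Your overall architecture coincides with the paper's in its second half: the scaling-augmented functional $\widetilde I(\theta,u)=I\bigl(u(e^{-\theta}\cdot)\bigr)$ is exactly the paper's $J$ in \eqref{J}, the Cerami--Poho\v{z}aev sequence is Proposition \ref{pr:sqJ}, the balance $2\|\Delta u_n\|_2^2+\beta\|\nabla u_n\|_2^2\to N c_k$ is \eqref{forlater}, and compactness via the symmetric embeddings plus symmetric criticality is Lemmas \ref{le:bdd}--\ref{le:cpt}. The genuine gap is in the step you use to make the levels diverge. Your fountain estimate $\inf\{I(u):u\in Z_k,\ \|u\|=\rho_k\}\to+\infty$ requires $\int_{\RN}G(u)\,dx$ to be absorbed, on spheres of radius $\rho_k\to+\infty$ inside the tail spaces, by $\e\|u\|^2$ plus a term gaining smallness from $\sup\{\|u\|_p:u\in Z_k,\|u\|=1\}\to0$ at a \emph{subcritical} $p$. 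Under \eqref{g2}--\eqref{g3} alone this does not go through as stated: for $N\ge5$ the growth is only $o(|s|^{2^{**}})$ and at the critical exponent the tail spaces give no gain; for $N=4$ the Adams--Ruf--Sani inequality (Lemma \ref{Th14RS}, Corollary \ref{corTh14RS}) is available only under the constraint $\alpha\|u\|^2<32\pi^2$, which is incompatible with $\|u\|=\rho_k\to+\infty$; and for $N=3$ there is \emph{no} growth assumption at infinity at all, so $\int G(u)$ on such spheres is controlled only through $\|u\|_\infty\lesssim\rho_k$ with constants depending on $g$ in a way totally unrelated to the decay rate of the tail norms. Also, your bound $|H(s)|\le\e s^2+C_\e|s|^{2^{**}}$ is not two-sided (the liminf in \eqref{g2} only gives $O(s^2)$ from below) and is vacuous for $N\in\{3,4\}$ where $2^{**}=+\infty$. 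The paper circumvents all of this with the Hirata--Ikoma--Tanaka comparison functional: it builds $\overline h\ge h\ge g+m's$ with the monotonicity/AR-type property of Lemma \ref{le:h}~(\ref{d211}), so that $\overline I\le I$ satisfies the Palais--Smale condition (Proposition \ref{pr:PS2}), and then gets $c_k\to+\infty$ by a purely qualitative genus/PS argument (Proposition \ref{pr:infty}), with no quantitative tail estimate and no growth restriction needed when $N\le4$. If you insist on the fountain route you must either add such a comparison device or carry out a delicate diagonal choice of $(\e_k,\alpha_k,\rho_k)$, none of which is in your sketch.

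Two further points are asserted rather than proved. First, the boundedness of the Cerami--Poho\v{z}aev sequence: the Poho\v{z}aev balance only bounds $\|\Delta u_n\|_2$, $\|\nabla u_n\|_2$ and $\int G(u_n)$; the control of $\|u_n\|_2$ is the actual content of Lemma \ref{le:bdd} and needs a separate argument (for $N\ge4$ a rescaling $v_n=u_n(t_n\cdot)$ with $t_n=\|u_n\|_2^{2/N}$ and a contradiction via Proposition \ref{pr:Jarek'} applied to the auxiliary $h$; for $N=3$ an $L^\infty$--$L^6$ estimate). You correctly flag this as the main obstacle, but give no argument. Second, for your scheme to produce a Cerami sequence for $I$ at the fountain level $c_k$, you need the identification of the minimax values of $I$ with those of $\widetilde I$ over the augmented class; in the paper this is done for the symmetric mountain-pass classes with the fixed odd boundary map $\gamma_k$ (so that $\widetilde\sigma_k=\sigma_k$), and the analogous statement for fountain-type classes is not off-the-shelf and would have to be proved.
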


In the two dimensional case we have to require stronger assumptions about $g$. More precisely we have what follows.
\begin{proposition}\label{pr:main}
Let $N=2$. Assume that  \eqref{g1}, \eqref{g2}, and \eqref{g4} hold and that
\begin{equation}\label{AR}
\text{there exists $\gamma>2$ such that $\displaystyle g(s)s + ms^2 \ge \gamma \left(G(s) + \frac{m}{2}s^2\right)$ for every $s\in\R$.}
\end{equation}
Then there exists a sequence $\{u_n\} \subset H_{\cO(2)}^2(\RD)$ of solutions to \eqref{bilap} such that $I(u_n) \to +\infty$ as $n \to +\infty$.
\end{proposition}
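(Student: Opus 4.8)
The plan is to obtain $\{u_n\}$ as critical points of $I$ restricted to $\cH := H^2_{\cO(2)}(\RD)$; by the principle of symmetric criticality \cite{Palais} these are then solutions to \eqref{bilap}. First I would absorb the positive mass into the quadratic part: setting $h := g + m\,\Id$,
\[
\tilde G(s) := G(s) + \tfrac{m}{2}s^2 = \int_0^s h(t)\,dt, \qquad Q(u) := \ird\bigl[(\Delta u)^2 + \beta|\n u|^2 + m u^2\bigr]\,dx,
\]
one has $I(u) = \tfrac12 Q(u) - \ird \tilde G(u)\,dx$, and, because $\beta > -2\sqrt m$, the bilinear form $Q(\cdot,\cdot)$ is an inner product on $H^2(\RD)$ whose induced norm $\|\cdot\|$ is equivalent to the standard one (on the Fourier side, $t^2 + \beta t + m \ge c\,(1 + t^2)$ for $t \ge 0$). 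Hypothesis \eqref{g2} gives $\tilde G(s) = o(s^2)$ as $s\to0$ and, sharper, $\limsup_{s\to0} h(s)/s = 0$; hypothesis \eqref{AR} is exactly the Ambrosetti--Rabinowitz inequality $\tilde G'(s)s \ge \gamma\tilde G(s)$ with $\gamma > 2$, which together with \eqref{g4} yields $\tilde G(s) \ge c_\gamma|s|^\gamma - C s^2$ for all $s\in\R$ and some $c_\gamma,C>0$. Since $N=2$, $H^2(\RD)\hookrightarrow L^\infty(\RD)$, so $I$ is well defined and of class $C^1$ on $H^2(\RD)$ with no growth condition on $g$ at infinity (cf.\ the remark for $N<4$ in the Introduction); moreover \eqref{AR} here takes the place of the Berestycki--Lions-type scaling argument used for $N\ge3$, which degenerates when $N=2$.

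The crucial --- and, I expect, most delicate --- point is the Cerami compactness of $I$ on $\cH$ at every level. It rests on the fact that $\cH\hookrightarrow L^q(\RD)$ is compact for every $q\in(2,+\infty)$: on balls by Rellich's theorem, and on the complement of a large ball because radial $H^1$ functions obey $|u(x)|\le C|x|^{-1/2}\|u\|_{H^1}$, so their sup-norm there is uniformly small. Let $(u_n)\subset\cH$ satisfy $I(u_n)$ bounded and $(1+\|u_n\|)\|I'(u_n)\|_{\cH^*}\to0$. Boundedness of $(u_n)$ follows in the usual way from $I(u_n)-\tfrac1\gamma I'(u_n)[u_n]\ge(\tfrac12-\tfrac1\gamma)Q(u_n)$ via \eqref{AR}; hence, up to a subsequence, $u_n\weakto u$ in $\cH$, $u_n\to u$ in $L^q(\RD)$ for $q\in(2,\infty)$ and a.e., and testing $I'(u_n)$ against $C_c^\infty(\RD)$ functions (dominated convergence, using that $(u_n)$ is bounded in $L^\infty(\RD)$ and $g$ is continuous) gives $I'(u)=0$. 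Writing $v_n:=u_n-u\weakto0$ and using $I'(w)[\phi]=Q(w,\phi)-\ird h(w)\phi\,dx$, one finds $Q(v_n)-\ird h(u_n)v_n\,dx\to0$ (the term $\ird h(u)v_n\,dx$ vanishing since $v_n\weakto0$ in $L^2$ and $h(u)\in L^2$). Now write $h(u_n)=a_n u_n$ with $a_n:=h(u_n)/u_n$ (and $a_n:=0$ on $\{u_n=0\}$): one has $\sup_n\|a_n\|_\infty<\infty$ (linear growth of $h$ at $0$), $a_n u\to h(u)$ in $L^2(\RD)$ by dominated convergence --- so $\ird a_n u\,v_n\,dx\to0$ --- while for every $\eps>0$
\[
\ird a_n v_n^2\,dx \le \Bigl(\sup\{h(s)/s : 0<|s|\le\eps\}\Bigr)_{\!+}\sup_n\|v_n\|_2^2 + o(1),
\]
the error term controlling $\{|u_n|>\eps\}$ (of measure bounded uniformly in $n$, while $v_n\to0$ in $L^q(\RD)$). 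Letting $\eps\to0^+$ and using $\limsup_{s\to0}h(s)/s=0$ gives $\limsup_n Q(v_n)\le0$, hence $Q(v_n)\to0$, i.e.\ $u_n\to u$ in $\cH$. What makes this step non-routine is that $\cH\hookrightarrow L^2(\RD)$ is \emph{not} compact: strong convergence is recovered only by combining the one-sided control of $h$ near $0$ (a consequence of $\limsup_{s\to0}g(s)/s=-m$) with the sign-definiteness of $Q$.

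Finally I would run the Fountain Theorem in a form tailored to Cerami sequences (see, e.g., \cite{Willem_Book} for the Palais--Smale version). Since $g$ is odd, $I$ is even with $I(0)=0$. Fix a Hilbert basis $(e_j)_{j\ge1}$ of $\cH$ and set $Y_k:=\Span\{e_1,\dots,e_k\}$, $Z_k:=\overline{\Span\{e_j:j\ge k\}}$. On the finite-dimensional $Y_k$ all norms are equivalent, so $\tilde G(s)\ge c_\gamma|s|^\gamma-Cs^2$ with $\gamma>2$ forces $I(u)\to-\infty$ as $\|u\|\to+\infty$ in $Y_k$, whence $\max_{u\in Y_k,\,\|u\|=\rho_k}I(u)\le0$ for $\rho_k$ large. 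For the linking estimate on $Z_k$, the bound $\tilde G(s)\le\tfrac\eps2 s^2+C_{\eps,T}|s|^q$ for $|s|\le T$ (fixed $q\in(2,\infty)$, the coefficient of $s^2$ small since $\tilde G(s)=o(s^2)$), together with $\|u\|_\infty\le C\|u\|$ and $\beta_k:=\sup\{\|w\|_q:w\in Z_k,\ \|w\|=1\}\to0$, gives $\inf_{u\in Z_k,\,\|u\|=r_k}I(u)\to+\infty$ provided $r_k\to+\infty$ is chosen to grow slowly enough that the superquadratic error $C_{\eps,\,Cr_k}\,\beta_k^q\,r_k^q$ is $o(r_k^2)$; one also arranges $\rho_k>r_k$. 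Combined with the compactness proved above, the Fountain Theorem yields critical points $u_n\in\cH$ of $I$ with $I(u_n)\ge\inf_{u\in Z_n,\,\|u\|=r_n}I(u)\to+\infty$, and by symmetric criticality these solve \eqref{bilap}, which is the assertion.
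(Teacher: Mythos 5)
Your proposal is correct, but it follows a genuinely different route from the paper. The paper proves Proposition \ref{pr:main} as a by-product of the machinery built for $N\ge3$: the HIT-type truncations $h,\overline h$ and the comparison functional $\overline I$ (to get $\sigma_k\to+\infty$ via a genus argument), the Jeanjean-type augmented functional $J(s,u)$ of \eqref{J} producing special Palais--Smale sequences with $s_n\to0$ and almost-vanishing $\de_sJ$, and then \eqref{AR} enters only in Lemma \ref{le:bdd} to bound these sequences when $N=2$, with strong convergence obtained in Lemma \ref{le:cpt} through Proposition \ref{pr:Jarek'} and Fatou's lemma. You instead bypass both $\overline I$ and $J$ entirely: you absorb the mass into the quadratic form $Q$, observe that \eqref{AR} is an Ambrosetti--Rabinowitz condition for $\tilde G=G+\tfrac m2 s^2$, verify the (PS)/Cerami condition for $I$ directly on $\cH=H^2_{\cO(2)}(\RD)$ (boundedness from \eqref{AR}, compact embedding into $L^q(\RD)$, $q>2$, by Strauss, and strong convergence via the one-sided control $\limsup_{s\to0}(g(s)+ms)/s=0$ together with the coercivity of $Q$ --- your splitting $h(u_n)=a_nu_n$ with the $\{|u_n|\le\eps\}$/$\{|u_n|>\eps\}$ decomposition is sound, and is essentially an alternative to the paper's Fatou argument), and then apply the Fountain Theorem, where the $T$-dependent constant $C_{\eps,Cr_k}$ (unavoidable, since no growth at infinity is assumed) is correctly neutralized by letting $r_k\to\infty$ slowly relative to $\beta_k\to0$. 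This is more elementary and self-contained for this specific statement, precisely because \eqref{AR} simultaneously yields boundedness of ordinary (PS) sequences and the superquadratic lower bound $\tilde G(s)\ge c|s|^\gamma-Cs^2$ needed for the finite-dimensional estimate; the paper's heavier route, by contrast, is uniform in $N\ge2$ and avoids \eqref{AR} whenever the Poho\v zaev/scaling information is available ($N\ge3$), which your argument cannot replace. One small imprecision: \eqref{g2} does not give $\tilde G(s)=o(s^2)$ as $s\to0$, only the one-sided bound $\limsup_{s\to0}\tilde G(s)/s^2\le0$ (the liminf is merely finite); this is harmless, since both in the geometry and in the compactness step you only use the upper estimates $\tilde G(s)\le\tfrac\eps2 s^2+C_{\eps,T}|s|^q$ and $h(s)\le\eps s$ near $0$, exactly as you indicate.
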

We point out that \eqref{AR} holds, for example, if $g(s) = \a |s|^{2\s}s - m s$ for some $\s,\a > 0$.

In the zero mass regime (when $N\ge3$), taking into account the two different subcases mentioned above, we have the following results.

%
%

\begin{theorem}\label{th:mm0}
Let $N\ge 3$. Assume that \eqref{g1}, \eqref{g2''} or \eqref{g2'}, \eqref{g3}, and \eqref{g4} hold. Then there exists a sequence $\{u_n\} \subset \cd_{\cO(N)}(\RN)$ of solutions to \eqref{bilap} such that $I(u_n) \to +\infty$ as $n \to +\infty$.\\
If, moreover, $N\ge6$, then there exists a sequence $\{u_n\} \subset \cd_X(\RN)$ of solutions to \eqref{bilap} such that $I(u_n) \to +\infty$ as $n \to +\infty$.
\end{theorem}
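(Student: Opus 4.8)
The plan is to obtain the $u_{n}$ as critical points of $I$ restricted to a natural constraint built from the Pohozaev identity, which makes $I$ coercive there and thus bypasses any Ambrosetti--Rabinowitz-type condition; such a condition is not implied by \eqref{g1}--\eqref{g4}, and the classical device for avoiding it --- rescaling $x$ --- is unavailable here because $\Delta^{2}$ and $\Delta$ scale differently. For $u\in\cd_{\cO(N)}(\RN)$ set
\[
\Phi(u):=\frac{N-4}{2}\|\Delta u\|_{2}^{2}+\frac{N-2}{2}\beta\|\nabla u\|_{2}^{2}-N\int_{\RN}G(u)\,dx,
\]
so that every solution of \eqref{bilap} satisfies $\Phi(u)=0$, and put $\mathcal{P}:=\{u\in\cd_{\cO(N)}(\RN)\setminus\{0\}:\Phi(u)=0\}$. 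On $\mathcal{P}$ one computes $I(u)=\frac{2}{N}\|\Delta u\|_{2}^{2}+\frac{\beta}{N}\|\nabla u\|_{2}^{2}$, so $I$ is strictly positive and coercive on $\mathcal{P}$; moreover $\mathcal{P}$ is a symmetric $C^{1}$-manifold (by \eqref{g1}--\eqref{g3}; when $N=4$ one invokes the Adams inequality for $\Delta\colon\cd(\RQ)\to L^{2}(\RQ)$ to give $\int_{\RN}G(u)\,dx$ a meaning) and is bounded away from $0$ when $N\ge4$. When $N=3$, where $\mathcal{P}$ may approach the origin, one works instead on the Nehari--Pohozaev set obtained by adjoining the constraint $\langle I'(u),u\rangle=0$, on which $I$ is still positive, coercive and bounded away from $0$.

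For every $k\in\N$ one produces, by a gluing-and-dilation argument in the spirit of \cite{BL2} starting from \eqref{g4} (which supplies a function $w$ with $\int_{\RN}G(w)\,dx>0$; its dilates $w(\cdot/\lambda)$ have $G$-mass $\lambda^{N}\int_{\RN}G(w)\,dx$ and can therefore be radially projected onto $\mathcal{P}$), a compact symmetric subset of $\mathcal{P}$ of genus at least $k$ on which $I$ is bounded from above. Consequently the minimax values
\[
c_{k}:=\inf\Bigl\{\sup_{u\in A}I(u)\ :\ A\subset\mathcal{P}\ \text{compact, symmetric, of genus}\ge k\Bigr\}
\]
are well defined and satisfy $0<c_{1}\le c_{2}\le\cdots$.

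One then verifies that $I|_{\mathcal{P}}$ satisfies the Palais--Smale condition at every $c_{k}$: a Palais--Smale (or Cerami) sequence is bounded by coercivity, hence converges weakly along a subsequence, and one upgrades this to strong convergence using the compact embeddings $\cd_{\cO(N)}(\RN)\hookrightarrow L^{q}(\RN)$ for $q$ in the open interval $(2^{*},2^{**})$ --- a Strauss-type lemma for radial functions of $\cd$, supplemented when $N=4$ by the exponential Orlicz embedding --- together with \eqref{g3} and the control of $G$ near the origin ($G(s)\le-c|s|^{2^{*}}$ under \eqref{g2''}, or $G(s)=o(|s|^{2^{*}})$ under \eqref{g2'}) to pass to the limit in the nonlinear terms and then in norm. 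The symmetric minimax principle then gives, for each $k$, a critical point $u_{k}$ of $I|_{\mathcal{P}}$ with $I(u_{k})=c_{k}$, and a standard argument based on the Palais--Smale condition gives $c_{k}\to+\infty$. Finally one removes the Lagrange multiplier: from $I'(u_{k})=\mu_{k}\Phi'(u_{k})$, a computation combining the Pohozaev identity of this equation with the constraint $\Phi(u_{k})=0$ (and, for $N\ne4$, with the Nehari identity) forces $\mu_{k}=0$, so that, after elliptic regularity and the principle of symmetric criticality \cite{Palais}, $u_{k}$ is a solution of \eqref{bilap} with $I(u_{k})\to+\infty$. For $N\ge6$ the whole scheme is repeated verbatim in $\cd_{X}(\RN)$: the group $\cO(N_{1})\times\cO(N_{1})\times\cO(N_{3})$ is compatible with $\RN$, so the same compact embeddings hold, and $I$ and $\Phi$ respect the antisymmetry defining $X$ (depending on $u$ only through $|\nabla u|$, $\Delta u$ and $G(u)$), so symmetric criticality again yields genuine solutions.

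The main obstacle is the compactness step: one must control $\int_{\RN}G(u_{k})\,dx$ and $\int_{\RN}g(u_{k})u_{k}\,dx$ exactly at the two scales where the embedding of $\cd$ is only continuous --- the $L^{2^{*}}$ scale near the origin and, for $N=4$, the exponential scale at infinity --- which is precisely what \eqref{g2''}/\eqref{g2'} and \eqref{g3} are designed to handle. A secondary, structural difficulty is arranging that the constraint is a well-behaved manifold bounded away from $0$, carrying a dilation projection and non-empty high-genus minimax classes: this is immediate for $N\ge5$ but needs the modification above for $N=3$ and the Adams framework for $N=4$. (One could replace the Pohozaev constraint by Jeanjean's monotonicity trick or by an augmented functional carrying a dilation parameter, but the coercivity of $I$ on $\mathcal{P}$ makes the constrained route the most economical.)
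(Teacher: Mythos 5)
Your route (constrained minimax on the Poho\v zaev set $\mathcal{P}=\{\Phi=0\}$) is genuinely different from the paper's (which uses the augmented functional $J(s,u)$ of \eqref{J} to manufacture bounded Palais--Smale sequences, plus the comparison functional $\overline I$ under \eqref{g2''} and a Palais--Smale--Poho\v zaev/deformation argument under \eqref{g2'}), and the algebra you rely on is partly sound: on $\mathcal P$ one indeed has $I(u)=\tfrac2N\|\Delta u\|_2^2+\tfrac\beta N\|\nabla u\|_2^2$, and for $N\ge4$ the fiber map $\lambda\mapsto\Phi(u(\cdot/\lambda))$ has a unique zero when $\int_{\RN}G(u)\,dx>0$. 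But there are genuine gaps. First, the natural-constraint issue is not settled by your multiplier computation. That computation (combining the Poho\v zaev identity of the equation $I'(u)=\mu\,\Phi'(u)$ with $\Phi(u)=0$) yields $\mu\bigl(2(N-4)\|\Delta u\|_2^2+(N-2)\beta\|\nabla u\|_2^2\bigr)=0$, which forces $\mu=0$ only for $N\ge4$; for $N=3$ the bracket can vanish, so even the natural-constraint property of $\mathcal P$ is in doubt, and your proposed replacement (the Nehari--Poho\v zaev set) is not viable under \eqref{g1}--\eqref{g4} alone: with $g$ merely continuous and no monotonicity or Ambrosetti--Rabinowitz structure, that codimension-two set need not be a manifold, projections onto it are not canonical, and you would have to kill \emph{two} multipliers. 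Moreover, even for $N\ge4$ your argument removes the multiplier only at an exact critical point; to verify the Palais--Smale condition for $I|_{\mathcal P}$ you must control the multipliers $\mu_n$ along PS sequences (show they are bounded and tend to $0$), and you must justify the Poho\v zaev identity for the auxiliary equation with $g$ only continuous and $u$ only in $\cd(\RN)$ (no $L^2$ information in the zero-mass space) --- none of this is addressed, and it is exactly the kind of difficulty the paper circumvents by building PS sequences that \emph{already} almost satisfy $P(u_n)\to0$ via $\partial_sJ$.

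Second, two steps you label ``standard'' are precisely where the paper's machinery is needed. (i) Compactness: under \eqref{g2''} the nonlinearity behaves like $-\ell|s|^{2^*-2}s$ at the origin, a scale at which $\cd_{\cO(N)}(\RN)\hookrightarrow L^{2^*}$ is \emph{not} compact; passing to the limit in $\int g(u_n)(u_n-u_0)\,dx$ requires splitting off the critical part and using Fatou/weak lower semicontinuity, which is what the auxiliary function $h=(\tfrac\ell2 s^{2^*-1}+g)_+$ and Lemma \ref{le:h0} are for --- your sketch gestures at this but does not carry it out, and the same splitting would have to be redone tangentially on $\mathcal P$. (ii) Divergence of the levels: proving $c_k\to+\infty$ for a genus minimax with a merely continuous, non-AR nonlinearity is one of the main obstacles here; the paper needs either the comparison functional $\overline I$ (under \eqref{g2''}) or the deformation lemma for $J$ on $\R\times\cD$ with the $(PSP)_b$ condition (under \eqref{g2'}), and a Rabinowitz-type argument on your constraint would in turn require a deformation theory \emph{on} $\mathcal P$ (hence again uniform non-degeneracy of $\Phi'$ and multiplier control), which you have not established. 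Finally, for $N=4$ in the radial class the exponential integrability you invoke is not an off-the-shelf Adams inequality: it is the new estimate of Lemma \ref{le:radiallemma} in $\cd_{\cO(4)}(\RQ)$ (the paper explicitly does not know it in all of $\cd(\RQ)$), so it must be proved, not cited.
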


Observe that these two last theorems do not deal with the non-radial setting whenever $N~\!=~\!4$.  
The reason is that assumptions \eqref{g1}, \eqref{g2''} or \eqref{g2'}, and \eqref{g3} seem to be sufficient to prove that the energy functional $I$ is well defined (and of class $C^1$) only over $\cd_{\cO(4)}(\R^4)$, as a consequence of a new Adams-type inequality proved  in Lemma \ref{le:radiallemma} and Corollary \ref{corTh14RS-zero}. 
However, we do not know if this holds in  the whole space $\cd(\R^4)$.  
Thus,
we strengthen (\ref{g3}) and the following holds.



\begin{theorem}\label{th:mainnr}
Let $N=4$ and assume that \eqref{g1}, \eqref{g2''} or \eqref{g2'}, \eqref{g4}, and
\begin{enumerate}[label=(g\arabic{*}'),ref=g\arabic{*}']
\setcounter{enumi}{2}
\item \label{g3'} 
$\displaystyle\lim_{s\to+\infty}\frac{g(s)}{e^{\a s^{4/3}}} = 0$, for every $\a >0$
\end{enumerate}
hold. Then there exists a sequence $\{u_n\} \subset \cd_X(\RQ)$ of solutions to \eqref{bilap} such that $I(u_n) \to +\infty$ as $n \to +\infty$.
\end{theorem}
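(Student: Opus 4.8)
The plan is to mimic the variational scheme already used for Theorem \ref{th:mm0} (the zero mass, radial case in dimension $N\ge3$) and for the two–dimensional Proposition \ref{pr:main}, the only genuinely new ingredient being the functional–analytic setting in which one works. First I would show that, under \eqref{g1}, \eqref{g2''} or \eqref{g2'}, \eqref{g4}, and the strengthened growth \eqref{g3'}, the functional
\[
I(u) = \frac12\int_{\RQ}\bigl[(\Delta u)^2 + \beta|\nabla u|^2\bigr]\,dx - \int_{\RQ}G(u)\,dx
\]
is well defined and of class $C^1$ on $\cd_X(\RQ)$. The point is that $\cd_X(\RQ)\subset\cd_{\cO(4)\times\cO(4)}(\R^8)$—no, rather: functions in $\cd_X(\RQ)$ are invariant under $\cO(2)\times\cO(2)$ acting on $\R^4=\R^2\times\R^2$, hence enjoy the radial-type decay and the Adams–Moser–Trudinger estimate of Lemma \ref{le:radiallemma}/Corollary \ref{corTh14RS-zero} in each two–dimensional factor; because the $e^{\alpha s^{4/3}}$ growth in \eqref{g3'} is precisely the exponent dual to the embedding available on this symmetric subspace (the loss from $s^2$ to $s^{4/3}$ reflecting that we only control partial, not full, radial symmetry), the superposition operator $u\mapsto G(u)$ is well defined, continuous and $C^1$ there. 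I would record this as a lemma before attacking the critical-point argument.

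Next I would verify the geometric and compactness hypotheses of the symmetric mountain-pass / $\Z_2$-genus theorem (the same abstract result invoked for Theorems \ref{th:main} and \ref{th:mm0}, e.g. from Rabinowitz or Bartsch–Willem). On $\cd_X(\RQ)$ the quadratic part $\frac12\|u\|_{\cd}^2$ is, since $\beta>0$, equivalent to the square of the norm; using \eqref{g2''} or \eqref{g2'} together with \eqref{g3'} one gets $G(s)=o(|s|^{2^*})$ at $0$ and $o(e^{\alpha s^{4/3}})$ at infinity, which yields $\int G(u)\,dx = o(\|u\|_{\cd}^2)$ as $\|u\|_{\cd}\to0$ on the symmetric subspace, hence the origin is a strict local minimum uniformly on finite-dimensional subspaces. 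Assumption \eqref{g4} gives a direction along which $I$ becomes negative after a suitable rescaling $u(\cdot/t)$, so on every finite-dimensional subspace $I$ is negative outside a large ball; since $I$ is even by \eqref{g1}, the standard min-max values
\[
c_n := \inf_{\gamma\in\Gamma_n}\ \sup_{u\in \gamma}\ I(u)
\]
over genus-$n$ symmetric sets are well defined, finite, and $c_n\to+\infty$ provided the Palais–Smale (or Cerami) condition holds.

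The remaining, and principal, task is the Palais–Smale/Cerami condition on $\cd_X(\RQ)$ at every positive level. Here I expect the main obstacle, as in Proposition \ref{pr:main}: without a subcritical polynomial growth at infinity one cannot a priori bound a PS sequence, so some Ambrosetti–Rabinowitz–type control is needed. In dimension $N=2$ this was imposed explicitly as \eqref{AR}; in the present setting the correct substitute should again follow from the combination of \eqref{g2''}/\eqref{g2'} near $0$ and \eqref{g3'} near infinity, perhaps after noting that a PS sequence for $I$ restricted to the symmetric subspace is bounded because, testing with the sequence itself and with the Pohozaev–type scaling, the exponential-subcritical growth forces $\int G(u_n)\,dx$ and $\int g(u_n)u_n\,dx$ to be comparable to $\|u_n\|_{\cd}^2$ only if the sequence is bounded. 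Once boundedness is in hand, compactness is the easy part: the embedding of $\cd_X(\RQ)$ into the relevant Orlicz space is compact (this is exactly the gain from the partial symmetry, via the compatibility of $\cO$ with $\RQ$ and the concentration-compactness argument already set up earlier in the paper), so a PS sequence converges strongly up to a subsequence, and $I$ satisfies PS. Feeding this into the $\Z_2$-equivariant min-max theorem produces the unbounded sequence $\{u_n\}\subset\cd_X(\RQ)$ of critical points with $I(u_n)\to+\infty$; by the principle of symmetric criticality \cite{Palais} these are weak, hence (by elliptic regularity) genuine solutions to \eqref{bilap}, and since $X\cap\cd_{\cO(4)}(\RQ)=\{0\}$ they are non-radial. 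The one delicate point to get right is making the growth hypothesis \eqref{g3'} do double duty: guaranteeing both that $I$ is $C^1$ on the symmetric subspace and that PS sequences are bounded there; I would isolate the boundedness estimate as a separate lemma, proved by the same truncation/scaling device used for \eqref{AR} in Proposition \ref{pr:main}.
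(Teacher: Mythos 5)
There is a genuine gap, and it sits at the very foundation of your argument: the claim that functions in $\cd_X(\RQ)$ ``enjoy the radial-type decay and the Adams--Moser--Trudinger estimate of Lemma \ref{le:radiallemma}/Corollary \ref{corTh14RS-zero} in each two--dimensional factor'', and that the exponent $4/3$ in \eqref{g3'} is the one ``dual'' to this partially symmetric setting. Lemma \ref{le:radiallemma} is proved through the Radial Lemma \ref{le:radialBL}, i.e.\ the pointwise decay $|u(x)|\le C\|\nabla u\|_2/|x|$, which requires full $\cO(4)$-invariance; block-radial functions in $\cd_X(\RQ)$ (which are $\cO(2)\times\cO(2)$-invariant but not radial) do not satisfy it, and a slice-wise two-dimensional Moser--Trudinger bound cannot be glued into a global one, since the norms of the slices are not controlled by $\|u\|_{\cd}$. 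Indeed, whether the inequality of Lemma \ref{le:radiallemma} holds beyond the radial subspace is stated in Section \ref{4} as an open problem -- this is precisely why Theorem \ref{th:mainnr} strengthens \eqref{g3} to \eqref{g3'}. The actual source of the exponent $4/3$ is different and symmetry-free: by Proposition \ref{premb} one has $\cd(\RQ)\hookrightarrow \W^{1,4}(\RQ)$, and do \'O's Moser--Trudinger inequality in $W^{1,4}(\RQ)$ (Lemma \ref{le:do}, with exponent $N/(N-1)=4/3$) yields Corollary \ref{corTh14RS-nr}, valid for \emph{all} of $\cd(\RQ)$; the symmetry of $\cd_X(\RQ)$ is used only to recover compactness (Lemma \ref{le:concen}) and to force non-radiality. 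Without this step your assertion that $I$ is well defined and $C^1$ on $\cd_X(\RQ)$ has no proof.

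The min-max/compactness part is also flawed as written. You invoke the classical genus minimax and assert the Palais--Smale condition at every positive level, deducing boundedness of PS sequences ``by testing with the sequence itself and with the Pohožaev-type scaling''. But an arbitrary PS sequence is not known to satisfy, even approximately, the Pohožaev identity, and the hypotheses \eqref{g1}, \eqref{g2''} or \eqref{g2'}, \eqref{g3'}, \eqref{g4} do not imply any Ambrosetti--Rabinowitz-type inequality; this is exactly the obstruction the paper circumvents by introducing the two-variable functional $J$ in \eqref{J}, whose minimax structure produces special sequences with $\partial_sJ(s_n,u_n)\to0$ (i.e.\ $P(u_n)\to0$ up to scaling), for which boundedness follows as in Lemma \ref{le:bdd}, while the divergence of the levels is obtained either through the comparison functional $\overline I$ (under \eqref{g2''}) or through the $(PSP)_b$ condition and the deformation Lemmas \ref{le:defJ}--\ref{le:defI} (under \eqref{g2'}). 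Likewise, the ``compact embedding into the relevant Orlicz space'' you invoke is neither proved nor needed: the paper works with the Lions-type Lemma \ref{le:Lions0}, the compact embeddings into $L^p(\RQ)$ for $p>4$, and Vitali's theorem. So the correct route is to establish Corollary \ref{corTh14RS-nr} from Lemma \ref{le:do} and then run the machinery of Subsections \ref{p0}/\ref{p00} on $\cd_X(\RQ)$; the scheme you sketch would not get past the boundedness step.
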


Recently many authors focused their attention on the nonlinear Schr\"odinger equation with a fourth-order dispersion term in all of $\RN$. Here we recall just some of them. Existence and properties of ground states, multiplicity of solutions, normalized solutions, and (in)stability have been considered in \cite{BCdN,BCGJ1,BCGJ2,BN,BFJ,FJMM,FLNZ,LW}, while \cite{BCM,LCW,MSV,SCW,YT,YW,ZTZ} studied the mixed  dispersion nonlinear Schr\"odinger equation in the non-autonomous case and with different types of nonlinearities.

Nevertheless, up to our knowledge, this is the first work  where this problem is tackled in presence of very general nonlinearities and, in particular, it seems that the zero mass case has not been considered so far.  Furthermore, since our nonlinearity satisfies very general assumptions, we cannot adapt easily the strategies of the aforementioned papers. For example, in \cite{BN}, to find a least-energy solution, the authors minimize the energy functional over the set
\[
\left\{u \in \Hd : \irn |u|^{2\s+2} \, dx = 1\right\},
\]
scaling the obtained minimizer $u \mapsto \t u$ for a suitable $\t>0$. However, the inhomogeneity of our nonlinearity makes it impossible to use such an  approach. In addition,  the presence of two differential terms of different orders (unless $\beta=0$ in the positive mass regime) prevents us also from using internal scaling $u \mapsto u(\t\cdot)$.

Furthermore, it is hard to prove the boundedness of Palais--Smale sequences.  In order to overcome such a difficulty, inspired by \cite{jj}, we introduce a two-variable functional: this allows to construct a suitable Palais--Smale sequence which, in addition, {\em almost} satisfies a Poho\v zaev-type identity. 
When dealing with such a particular bounded sequence, we also need to overcome the lack of compactness. 
In the radial setting, this is usually done using the well known Radial Strauss Lemma \cite{BL,Strauss}. However, since we are also interested in non-radial solutions, we develop a unified approach, inspired by \cite{M-2020,M-2021}, which holds in both cases and is only based on the symmetry structure introduced before.

Moreover, since we are interested in multiplicity results, we also have  to find a sequence of mini--max levels that diverges positively. To this aim, we follow two  strategies according to the different assumptions. 
More precisely,  for the positive mass case and for the zero mass case when \eqref{g2''} holds, we adapt an argument of \cite{HIT} introducing a comparison functional. Under the assumption \eqref{g2'}, instead,  we proceed in a way similar to \cite{CGT,HT,IT}, proving a suitable deformation lemma exploiting, once again, the two-variable functional.

The paper is organized as follows: we deal with the positive mass case in Section 2 and the zero mass case in Section \ref{3}; in particular, in both sections we start with the functional framework, then we show some compactness results, and finally we prove the main theorems. We conclude with some open questions in Section \ref{4}.

\

{\bf Notations}
\\
For $1 \le p \le +\infty$, we denote the usual $L^p(\RN)$ norm by $\|\cdot\|_p$.\\
For $y \in \RN$ and $r>0$, we denote $B(y,r) := \{x \in \RN : |x-y| < r\}$ and $B_r := B(0,r)$.\\
For every integer $k\ge1$, $\B^k\subset\R^k$ is the closed unit ball centred at the origin, while $\mathbb{S}^{k-1} := \partial\B^k$.\\
If $\o_{N-1}$ denotes the $(N-1)$-dimensional measure of $\mathbb{S}^{N-1}$, then we recall that $\o_3 = 2\pi^2$.\\
The letters $c$ and $C$  denote positive constants that may change after an inequality sign and whose precise value is not relevant.

\section{The positive mass case}\label{2}

\subsection{The functional framework}
\

As observed in \cite{BN}, if $\beta>-2\sqrt{m}$, then fixing $m'\in(0,m)$ such that $\beta>-2\sqrt{m'}$,
\[
\|u\|:=\left(
\| \Delta u\|_2 ^2+ \beta \| \nabla u\|_2 ^2+ m' \| u \|_2^2\right)^\frac 12
\]
defines a norm in $\Hd$, which is equivalent to the standard one. 
Concerning $\cd(\RN)$, by \cite{DS} we know that
\[
\cd(\RN) = \big\{u\in \D \mid \Delta u\in L^2(\RN)\big\}.
\]
In particular, $\cd(\RN)$ is continuously embedded in $\D $.

\begin{proposition}\label{premb}
For any $N\ge 3$,  $\cd(\RN)$  is continuously embedded into $ \W^{1,2^*}(\RN)$. 
\end{proposition}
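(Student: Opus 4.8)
The plan is to prove the continuous embedding by density: I would first establish the estimate $\|\varphi\|_{\W^{1,2^*}}\le C\|\varphi\|_{\cd}$ on $C_c^\infty(\RN)$ and then pass to the limit, exploiting that $\cd(\RN)$ is by definition the completion of $C_c^\infty(\RN)$ with respect to $\|\cdot\|_{\cd}$.

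For the first part, let $\varphi\in C_c^\infty(\RN)$. Integrating by parts twice gives the classical identity
\[
\|\cd\varphi\|_2^2 = \sum_{i,j=1}^N\irn (\partial_{ij}\varphi)^2\,dx = \sum_{i,j=1}^N\irn \partial_{ii}\varphi\,\partial_{jj}\varphi\,dx = \|\Delta\varphi\|_2^2 .
\]
Next I would apply the Sobolev inequality $\|v\|_{2^*}\le C\|\nabla v\|_2$ (which holds because $N\ge3$) to $v=\partial_i\varphi$ and sum over $i$, obtaining $\|\nabla\varphi\|_{2^*}\le C\|\cd\varphi\|_2=C\|\Delta\varphi\|_2$, and apply the same inequality to $v=\varphi$, obtaining $\|\varphi\|_{2^*}\le C\|\nabla\varphi\|_2$. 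Combining these two bounds yields $\|\varphi\|_{\W^{1,2^*}}\le C\|\varphi\|_{\cd}$ for every $\varphi\in C_c^\infty(\RN)$, with $C=C(N)$.

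For the passage to the limit, given $u\in\cd(\RN)$ I would choose $\varphi_k\in C_c^\infty(\RN)$ with $\varphi_k\to u$ in $\cd(\RN)$. By the estimate above, $\{\varphi_k\}$ is a Cauchy sequence in $\W^{1,2^*}(\RN)$, hence converges there to some $w$; since $\cd(\RN)$ embeds continuously into $\D$ and hence into $L^{2^*}(\RN)$, one also has $\varphi_k\to u$ in $L^{2^*}(\RN)$, so $w=u$, and comparing distributional gradients $\nabla w=\nabla u$. Thus $u\in\W^{1,2^*}(\RN)$ and the inequality passes to the limit, giving the continuous embedding.

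I do not expect a substantive difficulty here; the only point requiring minor care is identifying the $\W^{1,2^*}$-limit of $\{\varphi_k\}$ with $u$, which follows from uniqueness of limits across the $\cd$- and $L^{2^*}$-topologies. Alternatively, one could argue directly from the representation $\cd(\RN)=\{u\in\D:\Delta u\in L^2(\RN)\}$ by writing $\nabla u=\nabla(-\Delta)^{-1}(-\Delta u)$ and noting that $\nabla(-\Delta)^{-1}$ factors through a Riesz transform (bounded on $L^2$) composed with the Riesz potential $I_1$, which maps $L^2(\RN)$ into $L^{2^*}(\RN)$ by the Hardy--Littlewood--Sobolev inequality; this directly gives $\|\nabla u\|_{2^*}\le C\|\Delta u\|_2$.
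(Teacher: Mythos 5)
Your proposal is correct and follows essentially the same route as the paper: the core estimate on $C_c^\infty(\RN)$ via the identity $\sum_{i,j}\|\partial_{ij}\varphi\|_2^2=\|\Delta\varphi\|_2^2$ and the Sobolev inequality applied to $\varphi$ and to $\partial_i\varphi$, followed by a density argument. The only (immaterial) difference is in the limiting step, where you use completeness of $\W^{1,2^*}(\RN)$ and identification of limits through the embedding $\cd(\RN)\hookrightarrow\D\hookrightarrow L^{2^*}(\RN)$, while the paper passes to the limit via a.e. convergence and Fatou's Lemma.
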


\begin{proof}
Let  $u\in C_c^\infty(\RN)$. By Sobolev inequality, there exists $C>0$ such that
\[
\|u\|_{2^*}\le C\|\n u\|_2;
\qquad
\|\de_i u\|_{2^*}\le  C\|\n \de_i u\|_2
\text{ for any }i=1,\ldots,N.
\]
Moreover, being 
\[
\sum_{i,j} \irn |\de_{ij}u|^2 dx
=\irn |\Delta u|^2 dx,
\]
we deduce that
$$\|\n u\|_{2^*}\le C\|\Delta u\|_2.$$
Therefore, for any $u\in C_c^\infty(\RN)$ we have
\begin{equation}\label{emb5}
\|u\|_{\W^{1,2^*}} = \left(\|\n u\|_{2^*}^{2^*} + \|u\|_{2^*}^{2^*}\right)^{1/2^*}\le C\big(\|\Delta u\|_2 +\|\n u\|_2\big).
\end{equation}
Now let $u\in \cd(\RN)$ and $\{u_n\}$ be a sequence in $C_c^\infty(\RN)$ such that $u_n \to u$ in $\cd(\RN)$. Then, using also the continuous embedding of $\cd(\RN)$ into $\D $ we get that, up to a subsequence, $u_n \to u$ and $|\n u_n|\to |\n u|$ a.e. in $\RN$. 
Moreover, by \eqref{emb5} and using Fatou's Lemma, we deduce that 
	\[
	\|u\|_{\W^{1,2^*}}\le C\|u\|_{\cd},
	\]
	and so $\cd(\RN) \hookrightarrow \W^{1,2^*}(\RN)$.
\end{proof}

As an immediate consequence we have the following
\begin{corollary}\label{coemb}
The following continuous embeddings hold.
\begin{enumerate}
\item If $N\ge 5$, then $\cd(\RN) \hookrightarrow L^s(\RN)$, for any $s\in [2^*,2^{**}]$.
\item $\cd(\R^4) \hookrightarrow L^s(\R^4)$, for any $s\in [4,+\infty) $.
\item $\cd(\RT) \hookrightarrow L^s(\RT)$, for any $s\in [6,+\infty] $.
\end{enumerate}
\end{corollary}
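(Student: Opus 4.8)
The plan is to derive all three embeddings from Proposition~\ref{premb}, combined with the continuous embedding $\cd(\RN)\hookrightarrow\D$ recalled above — which, via the Sobolev inequality $\|u\|_{2^*}\le C\|\n u\|_2$, already gives $\cd(\RN)\hookrightarrow L^{2^*}(\RN)$ — together with classical first-order Sobolev/Morrey embedding theorems and a concluding interpolation step.

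First I would treat the case $N\ge5$. Since $2^*=2N/(N-2)<N$ exactly when $N\ge5$, the Sobolev embedding theorem applies to $\W^{1,2^*}(\RN)$ and yields $\W^{1,2^*}(\RN)\hookrightarrow L^{(2^*)^*}(\RN)$ with $(2^*)^*=N\,2^*/(N-2^*)$; an elementary computation gives $(2^*)^*=2N/(N-4)=2^{**}$. Composing with Proposition~\ref{premb} we obtain $\cd(\RN)\hookrightarrow L^{2^{**}}(\RN)$, and since also $\cd(\RN)\hookrightarrow L^{2^*}(\RN)$, the standard interpolation inequality $\|u\|_s\le\|u\|_{2^*}^{1-\theta}\|u\|_{2^{**}}^{\theta}$ — valid for $s\in[2^*,2^{**}]$ with the corresponding $\theta\in[0,1]$ — gives $\cd(\RN)\hookrightarrow L^s(\RN)$ for all such $s$.

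Next, for $N=4$ one has $2^*=4=N$, the borderline Sobolev exponent, so $\W^{1,4}(\R^4)\hookrightarrow L^s(\R^4)$ for every $s\in[4,+\infty)$ (but not for $s=+\infty$); composing again with Proposition~\ref{premb} gives the second statement. Finally, for $N=3$ we have $2^*=6>3=N$, so Morrey's embedding yields $\W^{1,6}(\R^3)\hookrightarrow L^\infty(\R^3)$ and hence $\cd(\R^3)\hookrightarrow L^\infty(\R^3)$; since in addition $\cd(\R^3)\hookrightarrow L^6(\R^3)$, interpolating between $L^6$ and $L^\infty$ (and using the endpoint $s=+\infty$ directly from Morrey) produces $\cd(\R^3)\hookrightarrow L^s(\R^3)$ for every $s\in[6,+\infty]$.

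I do not expect any genuine obstacle — consistently with the fact that this is stated as an immediate consequence of Proposition~\ref{premb} — since the argument only invokes that proposition and textbook embedding theorems. The two points that deserve a moment of care are the elementary identity $(2^*)^*=2^{**}$ when $N\ge5$ and keeping precise track of which endpoint exponents belong to each interval, in particular that the value $+\infty$ is attained only for $N=3$ (through Morrey's inequality) and must be excluded for $N=4$.
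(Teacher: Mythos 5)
Your proposal is correct and follows essentially the same route as the paper: both rely on Proposition~\ref{premb} together with the classical Sobolev embeddings of $\W^{1,2^*}(\RN)$ (subcritical for $N\ge5$ with $(2^*)^*=2^{**}$, borderline for $N=4$, Morrey for $N=3$), plus interpolation for the intermediate exponents. The paper merely states the $N=3,4$ cases as immediate, whereas you spell out the details; no gap either way.
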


\begin{proof}
We already know that, by Proposition \ref{premb}, $\cd(\RN)\hookrightarrow  \W^{1,2^*}(\RN)$.
\\
If $N\ge 5$, since $2^{**}=(2^*)^*$, and so $ \W^{1,2^*}(\RN)\hookrightarrow L^{2^{**}}(\RN)$, we can conclude.
\\
The cases $N=3$ and $N=4$ follow immediately.
\end{proof}

We remark that the case $N=3$ has been already proved in \cite[Lemma 3.1]{DS}.

When $N=4$, let us recall the following sharp result (i.e., \cite[Theorem 1.4]{RS}), which we write explicitly for $H^2(\RQ)$.
\begin{lemma}\label{Th14RS}
There exists  $C>0$ such that
\[
\sup_{u\in H^2(\RQ),\ \| u\|\le 1} \int_{\RQ} \left(e^{32\pi^2 u^2}-1\right)\,dx\le C.
\]
\end{lemma}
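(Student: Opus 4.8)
We sketch a proof; the statement is the specialization to $H^2(\RQ)$ of the Adams--Moser--Trudinger inequality in $\RQ$ for the bilaplacian proved in \cite[Theorem 1.4]{RS}, which in the paper we simply invoke, but let us indicate the scheme. The plan is to split $\RQ$ according to the size of $|u|$: where $|u|$ is small, the integrand is dominated by a multiple of $u^2$ and is handled by the embedding $H^2(\RQ)\hookrightarrow L^2(\RQ)$; where $|u|$ exceeds a fixed threshold, one is in a situation governed by Adams' sharp exponential inequality on sets of finite measure.

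In more detail: by density of $C_c^\infty(\RQ)$ in $H^2(\RQ)$, Fatou's lemma, and the equivalence of $\|\cdot\|$ with the standard $H^2(\RQ)$ norm noted above, it suffices to bound $\int_{\RQ}(e^{32\pi^2u^2}-1)\,dx$ for $u\in C_c^\infty(\RQ)$ with $\|u\|\le1$. Fix $T>0$. On $\{|u|\le T\}$, convexity of the exponential gives $e^{32\pi^2u^2}-1\le\frac{e^{32\pi^2T^2}-1}{T^2}\,u^2$, so
\[
\int_{\{|u|\le T\}}\bigl(e^{32\pi^2u^2}-1\bigr)\,dx\le\frac{e^{32\pi^2T^2}-1}{T^2}\,\|u\|_2^2\le C.
\]
On $\Omega_T:=\{|u|>T\}$, Chebyshev's inequality gives $|\Omega_T|\le\|u\|_2^2/T^2<+\infty$, and it remains to bound $\int_{\Omega_T}(e^{32\pi^2u^2}-1)\,dx$. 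In the second-order Moser--Trudinger setting one would truncate $u$ to an $H^1$ function supported in $\overline{\Omega_T}$ with unchanged gradient; truncation does not preserve $H^2$, so I would instead follow Adams' original route: write $u=c_4\,|x|^{-2}*(-\Delta u)$, apply O'Neil's rearrangement inequality for convolutions, and reduce to a one-dimensional integral inequality whose sharp exponential threshold in dimension four is exactly $32\pi^2$, absorbing the tail via the finiteness of $|\Omega_T|$ and the $L^2$ bound. (Alternatively one may expand $e^{32\pi^2u^2}-1=\sum_{k\ge1}\frac{(32\pi^2)^k}{k!}u^{2k}$, estimate $\|u\|_{2k}^{2k}$ by a Gagliardo--Nirenberg inequality with the precise growth of the constant in $k$, and sum.)

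The main obstacle, as always with Adams-type inequalities, is the \emph{sharpness} of the constant $32\pi^2$: both routes hinge on a delicate extremal computation --- a Moser-type lemma for the second-order Riesz kernel in the first, the exact asymptotics of the Gagliardo--Nirenberg constants in the second --- and, specific to the whole-space (rather than bounded-domain) setting, one must use that this threshold depends only on $\|\Delta u\|_2$, the lower-order terms $\beta\|\n u\|_2^2+m'\|u\|_2^2$ of $\|u\|$ serving only to make the integral over $\RQ$ finite; this is precisely what the argument of \cite{RS} achieves, and it is why the stated normalization $\|u\|\le1$ suffices. Since all of this is carried out there, the proof reduces to quoting \cite[Theorem 1.4]{RS}.
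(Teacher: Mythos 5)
Your proposal does essentially what the paper does: Lemma \ref{Th14RS} is not proved there but simply quoted from \cite[Theorem 1.4]{RS}, written out for $H^2(\RQ)$, and your argument likewise reduces in the end to invoking that theorem. The additional sketch of the Adams machinery (small/large splitting, Riesz representation and O'Neil's inequality, sharpness of $32\pi^2$) is consistent with the proof given in \cite{RS}, so the two routes coincide.
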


As a consequence of Lemma \ref{Th14RS} we have
\begin{corollary}\label{corTh14RS}
Let $\sigma\ge 2$, $M>0$, and $\alpha>0$ such that $\alpha M^2<32\pi^2$. Then there exists $C>0$ such that for every $\tau\in \left(1,32\pi^2/(\alpha M^2)\right]$ and $u\in H^2(\RQ)$ with $\|u\|\le M$,
\[
\int_{\RQ} |u|^\sigma \left(e^{\alpha u^2}-1\right)\,dx\le C\|u\|_\frac{\sigma\tau}{\tau-1}^\sigma.
\]
\end{corollary}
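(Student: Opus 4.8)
The plan is to derive Corollary~\ref{corTh14RS} from Lemma~\ref{Th14RS} by combining the Taylor expansion of the exponential with a clever choice of H\"older exponents so that the final bound can be absorbed into the Adams-type inequality. First I would fix $\sigma\ge2$, $M>0$, $\alpha>0$ with $\alpha M^2<32\pi^2$, and $\tau\in(1,32\pi^2/(\alpha M^2)]$. For $u\in H^2(\RQ)$ with $\|u\|\le M$ I apply H\"older's inequality with exponents $\tau/(\tau-1)$ and $\tau$ to split
\[
\int_{\RQ} |u|^\sigma\left(e^{\alpha u^2}-1\right)dx \le \left(\int_{\RQ}|u|^{\frac{\sigma\tau}{\tau-1}}dx\right)^{\frac{\tau-1}{\tau}}\left(\int_{\RQ}\left(e^{\alpha u^2}-1\right)^\tau dx\right)^{1/\tau}.
\]
The first factor is exactly $\|u\|_{\sigma\tau/(\tau-1)}^{\sigma\cdot(\tau-1)/\tau}$, which after raising to the first power in the statement gives the $\|u\|_{\sigma\tau/(\tau-1)}^\sigma$ appearing on the right-hand side; note the embedding $H^2(\RQ)\hookrightarrow L^{\sigma\tau/(\tau-1)}(\RQ)$ holds since $\sigma\tau/(\tau-1)\in[4,+\infty)$ (using $\sigma\ge2$ and Corollary~\ref{coemb}(2)), so this factor is finite.

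The heart of the matter is to bound the second factor by a constant depending only on $M,\alpha,\sigma,\tau$. I would use the elementary inequality $(e^t-1)^\tau \le e^{\tau t}-1$ for $t\ge0$ and $\tau\ge1$ (which follows, e.g., from convexity or from comparing power series term by term), so that
\[
\int_{\RQ}\left(e^{\alpha u^2}-1\right)^\tau dx \le \int_{\RQ}\left(e^{\tau\alpha u^2}-1\right)dx.
\]
Now I set $v := u/\|u\|$, so $\|v\|\le1$ and $\tau\alpha u^2 = \tau\alpha\|u\|^2 v^2 \le \tau\alpha M^2 v^2$. Since $\tau\le 32\pi^2/(\alpha M^2)$, we have $\tau\alpha M^2\le 32\pi^2$, hence $e^{\tau\alpha u^2}-1\le e^{32\pi^2 v^2}-1$ pointwise, and Lemma~\ref{Th14RS} applied to $v$ gives $\int_{\RQ}(e^{32\pi^2 v^2}-1)\,dx\le C$ with $C$ the universal constant from that lemma. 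This yields the desired uniform bound on the second factor.

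Putting the two estimates together produces
\[
\int_{\RQ}|u|^\sigma\left(e^{\alpha u^2}-1\right)dx \le C^{1/\tau}\,\|u\|_{\sigma\tau/(\tau-1)}^{\sigma\cdot\frac{\tau-1}{\tau}},
\]
and I would finish by absorbing the exponent: since $\sigma\tau/(\tau-1)\ge 4$ and $\|u\|_{\sigma\tau/(\tau-1)}$ is controlled by $\|u\|\le M$ through the embedding, the quantity $\|u\|_{\sigma\tau/(\tau-1)}$ is bounded, so the power $\sigma(\tau-1)/\tau < \sigma$ can be traded for the power $\sigma$ at the cost of enlarging the constant (using $x^a\le \max(1,R^{a-b})x^b$ for $x\in[0,R]$ when $a<b$). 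The main obstacle I anticipate is purely bookkeeping: making the constant genuinely uniform in $\tau$ over the whole interval $(1,32\pi^2/(\alpha M^2)]$ — in particular the $C^{1/\tau}$ factor is bounded by $\max(1,C)$ independently of $\tau$, and the embedding constant for $L^{\sigma\tau/(\tau-1)}$ must be controlled as $\tau\to1^+$ (where the exponent blows up) and as $\tau$ reaches its upper endpoint; this requires either a uniform bound on the relevant Sobolev embedding constants over the range of exponents or a slightly more careful two-step interpolation. None of this is deep, but it is where the writing needs care.
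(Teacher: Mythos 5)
Your overall strategy is exactly the paper's: H\"older with exponents $\tau/(\tau-1)$ and $\tau$, the elementary inequality $(e^s-1)^\tau\le e^{s\tau}-1$, and then rescaling $u\mapsto u/\|u\|$ so that $\alpha\tau\|u\|^2\le\alpha\tau M^2\le 32\pi^2$ lets Lemma \ref{Th14RS} bound the second factor uniformly. However, there is a concrete error in your bookkeeping of the first H\"older factor, and the patch you propose to compensate for it does not work. The first factor is
\[
\left(\int_{\RQ}|u|^{\frac{\sigma\tau}{\tau-1}}\,dx\right)^{\frac{\tau-1}{\tau}}
=\left[\left(\int_{\RQ}|u|^{\frac{\sigma\tau}{\tau-1}}\,dx\right)^{\frac{\tau-1}{\sigma\tau}}\right]^{\sigma}
=\|u\|_{\frac{\sigma\tau}{\tau-1}}^{\sigma},
\]
i.e.\ the exponent is exactly $\sigma$, not $\sigma(\tau-1)/\tau$. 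So the desired estimate comes out of H\"older directly, with constant $\max(1,C)^{1}$ where $C$ is the constant of Lemma \ref{Th14RS} raised to $1/\tau$, uniformly in $\tau$; no Sobolev embedding and no uniformity-in-$\tau$ of embedding constants is needed at all.

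The reason this matters, and why your final paragraph is a genuine flaw rather than harmless extra work: the claimed inequality $x^a\le\max(1,R^{a-b})x^b$ for $x\in[0,R]$ and $a<b$ is false, since $x^a/x^b=x^{a-b}\to+\infty$ as $x\to0^+$; one can trade a \emph{larger} power for a smaller one on a bounded set, never the reverse. Boundedness of $\|u\|_{\sigma\tau/(\tau-1)}$ via an embedding cannot rescue this, because an estimate with the smaller power $\sigma(\tau-1)/\tau$ is strictly weaker than the stated one precisely when the norm is small --- and the corollary is applied later (in Lemma \ref{le:Lions} and Proposition \ref{pr:Jarek'}) in situations where $\|u_n\|_{\sigma\tau/(\tau-1)}\to0$ and the power $\sigma$ on the right-hand side is what makes the conclusion useful. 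Deleting the last paragraph and correcting the exponent computation turns your argument into the paper's proof verbatim.
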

\begin{proof}
First, observe that, if $s\ge0$ and $t\ge1$,
\begin{equation}\label{ineqe1}
(e^s-1)^t\le e^{st}-1.
\end{equation}
Let $u\in H^2(\RQ)$. By H\"older inequality and \eqref{ineqe1} we have that, for every $\tau>1$,
\[
\int_{\RQ} |u|^\sigma \left(e^{\alpha u^2}-1\right)\,dx
\le
\|u\|_{\frac{\sigma\tau}{\tau-1}}^\sigma
\left(\int_{\RQ} \left(e^{\alpha u^2}-1\right)^\tau\,dx\right)^{1/\tau}
\le
\|u\|_{\frac{\sigma\tau}{\tau-1}}^\sigma
\left(\int_{\RQ} \left(e^{\alpha\tau u^2}-1\right)\,dx\right)^{1/\tau}.
\]
Moreover, if $\tau\in \left(1,32\pi^2/(\alpha M^2)\right]$ and $\|u\|\le M$, by Lemma \ref{Th14RS},
\begin{align*}
\int_{\RQ} \left(e^{\alpha\tau u^2}-1\right)\,dx
&=
\int_{\RQ} \left(e^{\alpha\tau\|u\|^2 (u/\|u\|)^2}-1\right)\,dx
\le
\int_{\RQ} \left(e^{\alpha\tau M^2 (u/\|u\|)^2}-1\right)\,dx\\
&\le
\int_{\RQ} \left(e^{32\pi^2 (u/\|u\|)^2}-1\right)\,dx
\le C
\end{align*}
and we conclude.
\end{proof}

\begin{remark}
Corollary \ref{corTh14RS} remains valid for $0<\sigma<2$ provided $\frac{\s\tau}{\tau-1}\ge2$.
\end{remark}

\subsection{Some compactness results}
\

In this section, we prove some useful compactness results that we will apply later.

We begin with the following variant of Lions's Lemma \cite[Lemma I.1]{Lions84_2}.

\begin{lemma}\label{le:Lions}
Let $N\ge2$, and $F\colon\R\to\R$ be a continuous function such that
\begin{equation}
\label{Fin0}
\lim_{s \to 0} \frac{F(s)}{s^2} = 0
\end{equation}
and
\begin{alignat}{2}
&\lim_{|s|\to+\infty}\frac{F(s)}{|s|^{2^{**}}} = 0&&\text{ if } N\ge5,\label{Finfty5}\\
&\lim_{|s|\to+\infty}\frac{F(s)}{e^{\alpha s^2}} = 0
\ \text{ for all } \alpha>0\quad
&& \text{ if }N=4.\label{Finfty4}
\end{alignat}
Assume that $\{u_n\}\subset\Hd$ is bounded and there exists $r>0$ such that
\begin{equation*}
\lim_n\sup_{y\in\RN}\int_{B(y,r)}u_n^2 \, dx = 0.
\end{equation*}
Then
\[
\lim_n\irn |F(u_n)| \, dx = 0.
\]
\end{lemma}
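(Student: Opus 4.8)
The plan is to follow the classical Lions-type argument, splitting the integral $\int_{\RN}|F(u_n)|\,dx$ according to the size of $u_n$ and using the vanishing hypothesis to control the intermediate range, while the small and large ranges are handled by the growth assumptions \eqref{Fin0} and \eqref{Finfty5}--\eqref{Finfty4} together with the $\Hd$-boundedness (hence, by Sobolev embedding, $L^p$-boundedness for a suitable range of $p$). First I would fix $\eps>0$ and use \eqref{Fin0} to find $\delta>0$ with $|F(s)|\le\eps s^2$ for $|s|\le\delta$; this gives $\int_{\{|u_n|\le\delta\}}|F(u_n)|\,dx\le\eps\|u_n\|_2^2\le C\eps$. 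For the large values, when $N\ge5$, \eqref{Finfty5} yields $M>0$ with $|F(s)|\le\eps|s|^{2^{**}}$ for $|s|\ge M$, so $\int_{\{|u_n|\ge M\}}|F(u_n)|\,dx\le\eps\|u_n\|_{2^{**}}^{2^{**}}\le C\eps$, using the embedding $\Hd\hookrightarrow L^{2^{**}}(\RN)$. When $N\le4$ one can simply pick \emph{any} exponent $p>2$ in the subcritical (or, for $N=4$, exponential) range: \eqref{Finfty4} (or the trivial bound for $N=2,3$) gives $|F(s)|\le\eps|s|^p$ for $|s|$ large, and $\{u_n\}$ is bounded in $L^p(\RN)$ by the appropriate Sobolev-type embedding, so again $\int_{\{|u_n|\ge M\}}|F(u_n)|\,dx\le C\eps$; for $N=4$ one invokes the Adams--Moser inequality (Lemma \ref{Th14RS}/Corollary \ref{corTh14RS}) to bound, say, $\int_{\{|u_n|\ge M\}}|u_n|^{2^{**}}$-type quantities — concretely, one absorbs the exponential factor $e^{\alpha u_n^2}$ for small enough $\alpha$ and controls the remaining polynomial power by the $L^q$ bounds coming from Corollary \ref{corTh14RS}.

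The heart of the argument is the middle range $\delta\le|u_n|\le M$. Here, on the set $A_n:=\{\delta\le|u_n|\le M\}$, one has the pointwise bound $|F(u_n)|\le C_{\delta,M}\,u_n^{2}$ (or, better, $|F(u_n)|\le C_{\delta,M}\,|u_n|^{p}$ for any chosen $p\ge 2$, since there $|u_n|$ is bounded above and below), but we need the integral over $A_n$ to be small, not merely finite. The standard trick is to write, for some fixed exponent $p$ strictly between $2$ and the critical one (e.g. $p\in(2,2^{**})$ for $N\ge5$, any $p>2$ for $N\le4$),
\[
\int_{A_n}|F(u_n)|\,dx
\le C\int_{A_n}|u_n|^{p}\,dx
\le C\int_{\RN}|u_n|^{p-2}\,u_n^{2}\,dx
\le C\|u_n\|_\infty^{?}\ \ \text{---no, rather---}\ \ C\bigl\|u_n\bigr\|_{?}^{?}\,\Bigl(\sup_{y}\int_{B(y,r)}u_n^2\,dx\Bigr)^{\theta}
\]
for a suitable $\theta>0$. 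The clean way to make this rigorous is the covering argument: cover $\RN$ by balls $B(y_i,r)$ of finite overlap, apply on each ball an interpolation/Gagliardo--Nirenberg inequality of the form $\|v\|_{L^{p}(B(y_i,r))}^{p}\le C\|v\|_{L^{2}(B(y_i,r))}^{p-2\mu}\|v\|_{H^2(B(y_i,r))}^{2\mu}$ for appropriate $\mu\in(0,1]$ (valid since $p$ is below the critical exponent of $H^2$ on the bounded domain $B(y_i,r)$), sum over $i$, and use $\sum_i\|u_n\|_{H^2(B(y_i,r))}^{2}\le C\|u_n\|_{\Hd}^{2}$ together with $\|u_n\|_{L^2(B(y_i,r))}^{2}\le\sup_y\int_{B(y,r)}u_n^2\to0$. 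This gives $\int_{A_n}|F(u_n)|\,dx\le C\bigl(\sup_y\int_{B(y,r)}u_n^2\bigr)^{(p-2\mu)/2}\to0$.

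Combining the three pieces: given $\eps$, choose $\delta,M$ as above so the tails contribute at most $C\eps$ uniformly in $n$, then let $n\to\infty$ so the middle contribution vanishes; hence $\limsup_n\int_{\RN}|F(u_n)|\,dx\le C\eps$, and since $\eps$ is arbitrary the limit is $0$. The main obstacle I expect is the treatment of the borderline dimension $N=4$: there one cannot use a polynomial critical exponent and must instead exploit the Adams-type exponential integrability (Lemma \ref{Th14RS}) to dominate $F$ for large $|u_n|$ — one writes $|F(s)|\le\eps\,|s|^{q}(e^{\alpha s^2}-1)$ for $|s|$ large with $\alpha$ as small as needed, applies Corollary \ref{corTh14RS} (legitimate because $\{u_n\}$ is bounded so $\alpha\|u_n\|^2<32\pi^2$ once $\alpha$ is small), and then the $L^{q\tau/(\tau-1)}$-norm appearing there is itself controlled either by the middle-range argument or by the $L^2$ vanishing via interpolation, so the same scheme closes.
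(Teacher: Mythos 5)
Your plan coincides, in substance, with the paper's proof for $N\ge4$: the growth splitting (quadratic near $0$ via \eqref{Fin0}, critical power $|s|^{2^{**}}$ at infinity for $N\ge5$, Adams-type control via Lemma \ref{Th14RS}/Corollary \ref{corTh14RS} for $N=4$), the uniform tail bounds from boundedness in $L^2$ and $L^{2^{**}}$ (resp. the exponential estimate), and above all the key mechanism for the middle part --- local interpolation between $L^2$ and the critical space, a covering of $\RN$ by balls of radius $r$ with finite overlap, and the vanishing of $\sup_y\int_{B(y,r)}u_n^2\,dx$ --- are exactly the ingredients the paper uses (it phrases this as ``$\|u_n\|_p\to0$ for one intermediate $p$'' rather than as a three-set decomposition, but that is only a reorganization). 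One cosmetic caveat: when you sum $\|u_n\|_{H^2(B(y_i,r))}^{2\mu}$ over the cover you need $\mu=1$ (or an extra H\"older step in $i$), which is why the paper chooses the specific exponent $p=2(1+4/N)$ so that the interpolation produces exactly $\|u_n\|_{H^2(B(y_i,r))}^{2}$; with a generic $\mu\in(0,1]$ your summation step as written does not go through verbatim, but fixing $p$ appropriately repairs it immediately.

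The one genuine gap is your treatment of the large-value set when $N\in\{2,3\}$: you invoke ``the trivial bound for $N=2,3$'' to get $|F(s)|\le\eps|s|^p$ for $|s|$ large, but the lemma imposes \emph{no} assumption on $F$ at infinity in these dimensions (only \eqref{Fin0}), so no such bound exists for a general continuous $F$ (e.g. $F(s)=s^4e^{s^2}$ satisfies the hypotheses for $N=3$). The correct ingredient, which the paper uses and you never state, is the embedding $\Hd\hookrightarrow L^\infty(\RN)$ for $N\le3$: the bounded sequence $\{u_n\}$ then satisfies $\sup_n\|u_n\|_\infty\le T$, so the set $\{|u_n|\ge M\}$ is empty for $M>T$ and only the behaviour of $F$ on $[-T,T]$ (continuity plus \eqref{Fin0}) is needed, after which your middle-range covering argument closes the proof. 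This is a one-line fix, but as written the $N=2,3$ case does not follow from your argument.
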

\begin{proof}
First, let us consider the case $N\ge 5$.\\
By \eqref{Fin0} and \eqref{Finfty5}, for every $p\in(2,2^{**})$ and $\e>0$ there exists $c_\e>0$ such that, for all $s\in\R$,
	\begin{equation*}
		|F(s)| \le \e(s^2 + |s|^{2^{**}}) + c_\e|s|^p.
	\end{equation*}
	Since $\{u_n\}$ is bounded in $L^2(\RN)$, and, by Sobolev embeddings, 	it is also bounded in $L^{2^{**}}(\RN)$, there exists $C>0$ such that, for every $n\in\N$,
	\[
	\irn |F(u_n)| \, dx \le C\e + c_\e\|u_n\|_p^p.
	\]
	Thus it suffices to prove that $u_n\to0$ in $L^p(\RN)$ at least for one $p\in (2,2^{**})$.\\
	Let us take $p=2(1+4/N)$.\\
	From the interpolation inequality for Lebesgue spaces and Sobolev inequality we have that, for every $y\in\RN$ and $r>0$ as in the statement,
	\[
	\|u_n\|_{L^p(B(y,r))} \le \|u_n\|_{L^2(B(y,r))}^{1-\lambda}\|u_n\|_{L^{2^{**}}(B(y,r))}^\lambda \le C\|u_n\|_{L^2(B(y,r))}^{1-\lambda}\|u_n\|_{H^2(B(y,r))}^\lambda,
	\]
	where $C>0$ does not depend on $y\in\RN$ and $\lambda=2/p=N/(N+4)$. Hence
	\[
	\|u_n\|_{L^p(B(y,r))}^p
	\le C\|u_n\|_{L^2(B(y,r))}^{p-2}\|u_n\|_{H^2(B(y,r))}^{2}.
	\]
	Then, covering $\RN$ with balls of radius $r$ such that each point is contained in at most $N+1$ balls we obtain
	\[
	\|u_n\|_p^p
	\le C \sup_k\|u_k\|^2 \sup_{y\in\RN}\left(\int_{B(y,r)}|u_n|^{2} \, dx\right)^{(p-2)/2}\to0.
	\]
	If $N=4$, by \eqref{Fin0} and \eqref{Finfty4},  for every $\e>0$, $\alpha>0$, and $\sigma \ge 2$ there exists $c_\e>0$ such that, for all $s\in\R$,	
	\begin{equation*}
		|F(s)| \le \e s^2 + c_\e|s|^\sigma(e^{\alpha s^2}-1).
	\end{equation*}
	Then, applying Corollary \ref{corTh14RS}, the boundedness of $\{u_n\}$ implies that for $\alpha>0$ and $\tau>1$ such that $\alpha\tau\sup_{n}\|u_n\|^2 \le 32\pi^2$
	\[
	\int_{\RQ} |F(u_n)| \, dx
	\le
	C \varepsilon+c_\e \|u_n\|_\frac{\sigma\tau}{\tau-1}^\sigma
	\]
	and so it suffices to prove that $u_n\to0$ in $L^\frac{\sigma\tau}{\tau-1}(\RN)$ at least for one couple $(\sigma,\tau)$ with $\sigma \ge 2$ and $\tau>1$. For example, to simplify the computations, we take $\s = 3$ and $\tau = 5$.\\
	Arguing as before, by interpolation we have that for every $y\in\RN$,
	\[
	\|u_n\|_{L^\frac{15}{4}(B(y,r))}
	\le
	\|u_n\|_{L^2(B(y,r))}^{1-\lambda}
	\|u_n\|_{L^{16}(B(y,r))}^\lambda
	\le C\|u_n\|_{L^2(B(y,r))}^{1-\lambda}\|u_n\|_{H^2(B(y,r))}^\lambda,
	\]
	where $C>0$ does not depend on $y\in\RN$ and $\lambda=8/15$, which allows us to conclude that $\|u_n\|_\frac{15}{4}\to 0$.\\
	Finally, if $N\in\{2,3\}$, by \eqref{Fin0}, using the boundedness of $\{u_n\}$, we can write
	\begin{equation*}
		|F(s)| \le \e s^2 + c_\e|s|^3 \quad \text{for all } s\in\left[-\sup_n\|u_n\|_\infty,\sup_n\|u_n\|_\infty\right],
	\end{equation*}
	and so
	\[
	\irn |F(u_n)| \, dx \le C\e + c_\e\|u_n\|_3^3.
	\]
	To prove that $u_n \to 0$ in $L^3(\mathbb{R}^N)$ we apply again the interpolation inequality, obtaining that, for every $y\in\RN$,
	\[
	\|u_n\|_{L^3(B(y,r))} \le \|u_n\|_{L^2(B(y,r))}^{1-\lambda}
	\|u_n\|_{L^{4}(B(y,r))}^\lambda \le c\|u_n\|_{L^2(B(y,r))}^{1-\lambda}\|u_n\|_{H^2(B(y,r))}^\lambda,
	\]
	where $c>0$ does not depend on $y\in\RN$ and $\lambda=2/3$, and we conclude as before.
\end{proof}

\begin{remark}
The condition $\displaystyle\lim_n \sup_{y\in\RN} \int_{B(y,r)} u_n^2 \, dx = 0$ holds if $\displaystyle\lim_n \sup_{y\in\RN} \int_{B(y,r)} |u_n|^q \, dx = 0$ for some $q \in [2,2^*)$.
\end{remark}

The next lemma shows when the condition $\lim_n\sup_{y\in\RN}\int_{B(y,r)}u_n^2 \, dx = 0$ can occur.

\begin{lemma}\label{le:concen}
Let $\cO\subset\cO(N)$ be a subgroup compatible with $\RN$, with $r>0$ as in the definition of compatibility. Let $(Y,\|\cdot\|_Y)$ be a normed space such that $Y\hookrightarrow L^2_\textup{loc}(\RN)$ compactly and $Y\hookrightarrow L^q(\RN)$ for some $q\in[2,+\infty)$. If $\{u_n\}\subset Y$ is bounded, $u_n \to 0$ a.e. in $\RN$, and each $u_n$ is $\cO$-invariant, then
\[
\lim_n\sup_{y\in\RN}\int_{B(y,r)}u_n^2 \, dx = 0.
\]
\end{lemma}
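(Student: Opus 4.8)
The plan is to argue by contradiction, following the standard strategy for this kind of statement (cf. \cite{Willem_Book,BW,Lions82}), and to exploit the compatibility of $\cO$ to distribute mass over infinitely many disjoint balls. Suppose the conclusion fails. Then, up to a subsequence, there exist $\delta>0$ and a sequence $\{y_n\}\subset\RN$ such that
\[
\int_{B(y_n,r)} u_n^2 \, dx \ge \delta \qquad \text{for every } n.
\]
If $\{y_n\}$ were bounded, then $B(y_n,r)\subset B_R$ for some fixed $R>0$, and the compact embedding $Y\hookrightarrow L^2_\textup{loc}(\RN)$ together with $u_n\to0$ a.e.\ would force $u_n\to0$ in $L^2(B_R)$, contradicting the lower bound. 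Hence $|y_n|\to+\infty$ along a further subsequence.

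Now I would use the definition of compatibility: fix $n$ and let $\fm(y_n,r)$ be as in the excerpt. By definition there are $g_1^{(n)},\dots,g_{\fm(y_n,r)}^{(n)}\in\cO$ such that the balls $B(g_i^{(n)}y_n,r)$ are pairwise disjoint. Since each $u_n$ is $\cO$-invariant, a change of variables gives $\int_{B(g_i^{(n)}y_n,r)} u_n^2\,dx = \int_{B(y_n,r)} u_n^2\,dx \ge \delta$ for every $i$, so by disjointness
\[
\|u_n\|_2^2 \ge \sum_{i=1}^{\fm(y_n,r)} \int_{B(g_i^{(n)}y_n,r)} u_n^2 \, dx \ge \fm(y_n,r)\,\delta.
\]
Because $\cO$ is compatible and $|y_n|\to+\infty$, we have $\fm(y_n,r)\to+\infty$, hence $\|u_n\|_2\to+\infty$. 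The remaining point is to convert this into a contradiction with boundedness in $Y$: if $q=2$ this is immediate from $Y\hookrightarrow L^2(\RN)$; if $q>2$ one interpolates, writing $\|u_n\|_q^q \ge c\,\|u_n\|_2^{2}/\|u_n\|_{?}^{?}$ — more cleanly, since $u_n$ is bounded in $L^2_\textup{loc}$ is not enough, I would instead observe that the same disjoint-balls argument bounds $\|u_n\|_q^q$ from below: $\int_{B(g_i^{(n)}y_n,r)}|u_n|^q\,dx$ are all equal, but here I only control the $L^2$ mass on $B(y_n,r)$. The clean fix is to note that boundedness in $Y$ gives boundedness in $L^2(\RN)$ only when $q=2$; for general $q\in[2,\infty)$ one combines the uniform $L^q$ bound with the diverging $L^2$ mass via Hölder on each ball, $\delta\le\int_{B(g_i^{(n)}y_n,r)}u_n^2\le \|u_n\|_{L^q(B(g_i^{(n)}y_n,r))}^2 |B_r|^{1-2/q}$, so each of the $\fm(y_n,r)$ disjoint balls carries $L^q$-mass bounded below by a fixed constant, whence $\|u_n\|_q^q\ge c\,\fm(y_n,r)\to+\infty$, contradicting boundedness in $L^q(\RN)$ and hence in $Y$.

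The main obstacle, and the only place requiring genuine care, is the dichotomy step showing $|y_n|\to+\infty$: one must invoke the compact embedding $Y\hookrightarrow L^2_\textup{loc}(\RN)$ together with $u_n\to0$ a.e.\ (the latter upgrades the weak-$L^2_\textup{loc}$ limit, obtained from boundedness and the compact embedding, to the $a.e.$ limit $0$, so the $L^2_\textup{loc}$ limit is genuinely $0$). Everything else is a routine invariance-plus-disjointness computation; the hypothesis $Y\hookrightarrow L^q(\RN)$ for some finite $q$ is exactly what is needed to close the contradiction once the mass has been spread over $\fm(y_n,r)\to+\infty$ disjoint balls.
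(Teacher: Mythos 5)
Your proof is correct and uses essentially the same ingredients as the paper's: H\"older on a ball plus $\cO$-invariance and disjointness to get $\mathfrak{m}(y,r)$-many balls each carrying a fixed amount of $L^q$-mass (so compatibility handles far-away centres), and the compact embedding $Y\hookrightarrow L^2_{\textup{loc}}(\RN)$ together with the a.e.\ limit to handle bounded centres. The only difference is presentational: you argue by contradiction via a dichotomy on concentration points $y_n$, while the paper runs the same estimates directly, splitting $\sup_y$ into $|y|>R$ and $|y|\le R$; the false start with the $L^2$-norm in the middle is harmless since you correctly replace it with the H\"older/$L^q$ argument.
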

\begin{proof}
Since each $u_n$ is $\cO$-invariant, for every $n$ we have
\[
\mathfrak{m}(y,r) \left(\int_{B(y,r)} u_n^2 \, dx\right)^\frac{q}{2}
\le C \mathfrak{m}(y,r) \int_{B(y,r)} |u_n|^q \, dx 
\le C \|u_n\|_q^q 
\le C\|u_n\|_Y^q 
\le C,
\]
where $C>0$ does not depend on $y$.\\
Let $\e>0$.
Since $\cO$ is compatible with $\RN$, there exists $R>0$ such that for every $n$
\[
\sup_{|y|>R} \int_{B(y,r)} u_n^2 \, dx \le \e.
\]
Moreover, from the compact embedding $Y \hookrightarrow L^2_\textup{loc}(\RN)$ and the almost everywhere pointwise convergence $u_n\to0$, for every sufficiently large $n$
\[
\sup_{|y|\le R} \int_{B(y,r)} u_n^2 \, dx \le \int_{B(0,R+r)} u_n^2 \, dx \le \e.\qedhere
\]
\end{proof}

\begin{remark}
Observe that, for instance, we will apply Lemma \ref{le:concen} when $Y = \cd(\RN)$ and $q=2^*$ (if $N\ge3$), or $Y = H^2(\RN)$ and $q=2$.
\end{remark}

Now we prove the following compactness result (see \cite{M-2020, M-2021}).

\begin{proposition}\label{pr:Jarek}
Let $F\in C^1(\RN)$ be such that $F(0)=0$ and
\begin{itemize}
	\item if $N\ge5$, there exists $C>0$ such that
	\begin{equation*}
		|F'(s)| \le C\left(|s| + |s|^{2^{**}-1}\right) \quad \text{for all } s\in\R;
	\end{equation*}
	\item if $N=4$, for every $\alpha>0$ there exist $\sigma \ge2$ and $C>0$ such that
	\begin{equation*}
	|F'(s)| \le C\left(|s| + \big(e^{\alpha s^2}-1\big)|s|^{\sigma-1}\right) \quad \text{for all } s\in\R;
	\end{equation*}
	\item if $N\in\{2,3\}$, there exists $C>0$ such that
	\begin{equation*}
		|F'(s)| \le C|s| \quad \text{for all } s\in[-1,1].
	\end{equation*}
\end{itemize}
Let $\{u_n\}\subset \Hd$ be bounded and such that $u_n \to u_0$ a.e. in $\RN$ for some $u_0\in\Hd$.
Then
\begin{equation}\label{conv1}
\lim_n \irn \big(F(u_n) - F(u_n-u_0) \big)\, dx = \irn F(u_0) \, dx.
\end{equation}
If, in addition,
\begin{alignat*}{3}
&\lim_{s \to 0} \frac{F(s)}{s^2} = \lim_{|s|\to+\infty} \frac{F(s)}{|s|^{2^{**}}} = 0 && \text{ when } N\ge5,\\
&\lim_{s \to 0} \frac{F(s)}{s^2} = \lim_{|s|\to+\infty} \frac{F(s)}{e^{\alpha s^{2}}} = 0  \ \ \text{ for all } \alpha>0 \ \ \ &&\text{ when } N=4,\\
&\lim_{s \to 0} \frac{F(s)}{s^2} = 0 && \text{ when } N\in\{2,3\},
\end{alignat*}
and $u_0$ and all the $u_n$ are $\cO$-invariant for a suitable subgroup $\cO\subset\cO(N)$ compatible with $\RN$, then
\begin{equation*}
\lim_n \irn F(u_n) \, dx = \irn F(u_0) \, dx.
\end{equation*}
\end{proposition}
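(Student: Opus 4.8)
The plan is to prove the two claims of Proposition~\ref{pr:Jarek} in order, the first being a Brezis--Lieb-type splitting and the second an upgrade of it under the extra vanishing hypotheses on $F$.

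\textbf{Step 1: the splitting \eqref{conv1}.} First I would write, for each $n$ and a.e. $x$,
\[
F(u_n)-F(u_n-u_0)=\int_0^1 \frac{d}{dt}F\bigl(u_n-u_0+tu_0\bigr)\,dt
=\int_0^1 F'\bigl(u_n-u_0+tu_0\bigr)u_0\,dt,
\]
so that, setting $w_n:=u_n-u_0$, the integrand is bounded by $|u_0|\sup_{t\in[0,1]}|F'(w_n+tu_0)|$. Since $w_n\to0$ a.e., the integrand converges a.e. to $\int_0^1 F'(tu_0)u_0\,dt=F(u_0)$, and it remains to produce a dominating function (or at least a uniformly integrable bound) so that Vitali's or the generalised dominated convergence theorem applies. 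Using the growth hypotheses on $F'$ and the elementary inequality $|a+b|^p\le C_p(|a|^p+|b|^p)$, one bounds $|F'(w_n+tu_0)|$ by a sum of terms of the form $|w_n|^{\gamma}+|u_0|^{\gamma}$ (times an exponential factor when $N=4$) with exponents for which $\{w_n\}$, hence $\{u_n\}$ and $u_0$, lie in the corresponding Lebesgue space (by Sobolev embeddings when $N\ge5$, by Corollary~\ref{corTh14RS} when $N=4$, and by boundedness in $L^\infty$ when $N\in\{2,3\}$, using that a bounded sequence in $H^2(\RN)$ with $N\le3$ is bounded in $L^\infty$). Multiplying by $|u_0|\in L^2$ and invoking H\"older gives an $L^1$ bound on the integrand uniform in $n$; the slightly delicate point is that the bound involves $|w_n|^{\gamma}|u_0|$, which is not dominated by a fixed function but \emph{is} uniformly integrable because $\{|w_n|^\gamma\}$ is bounded in a reflexive Lebesgue space and $|u_0|$ is fixed—so Vitali's convergence theorem closes the argument. (In the $N=4$ case one must be careful to choose $\alpha$ small enough that $\alpha\tau\sup_n\|u_n\|^2\le 32\pi^2$ for some $\tau>1$, exactly as in the proof of Lemma~\ref{le:Lions}, before applying Corollary~\ref{corTh14RS}.)

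\textbf{Step 2: the upgrade to $\irn F(u_n)\to\irn F(u_0)$.} In view of \eqref{conv1} it suffices to show $\irn F(w_n)\to0$. The idea is to combine Lemma~\ref{le:concen} and Lemma~\ref{le:Lions}. Since $\{u_n\}$ is bounded in $H^2(\RN)$ and $u_n\to u_0$ a.e., the sequence $\{w_n\}$ is bounded in $H^2(\RN)$, is $\cO$-invariant (as a difference of $\cO$-invariant functions), and satisfies $w_n\to0$ a.e. Applying Lemma~\ref{le:concen} with $Y=H^2(\RN)$ and $q=2$ (using $H^2(\RN)\hookrightarrow L^2_{\mathrm{loc}}$ compactly and $\cO$ compatible with $\RN$), we obtain
\[
\lim_n\sup_{y\in\RN}\int_{B(y,r)}w_n^2\,dx=0.
\]
Now $F$ is continuous with $F(0)=0$; the extra hypotheses say precisely that $F(s)/s^2\to0$ as $s\to0$ and the appropriate decay at infinity ($|s|^{2^{**}}$, or $e^{\alpha s^2}$, or nothing when $N\in\{2,3\}$), so Lemma~\ref{le:Lions} applies to $\{w_n\}$ and yields $\irn|F(w_n)|\,dx\to0$. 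Combining with \eqref{conv1} gives $\irn F(u_n)\,dx=\irn\bigl(F(u_n)-F(w_n)\bigr)\,dx+\irn F(w_n)\,dx\to\irn F(u_0)\,dx$.

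\textbf{Main obstacle.} The routine part is Step~2, which is essentially an application of the two preceding lemmas. The real work is the dominated/uniform-integrability argument in Step~1, and within it the $N=4$ case: there the growth bound on $F'$ carries an exponential factor $e^{\alpha s^2}$, and when one expands $e^{\alpha(w_n+tu_0)^2}$ one needs $e^{\alpha(w_n+tu_0)^2}\le e^{\alpha(1+\eta)w_n^2}\,e^{\alpha(1+1/\eta)u_0^2}$ (Young's inequality in the exponent) with $\eta>0$ chosen so that $\alpha(1+\eta)\tau\sup_n\|u_n\|^2\le 32\pi^2$ still holds for some $\tau>1$; only then can H\"older plus Corollary~\ref{corTh14RS} bound the resulting integrals uniformly in $n$. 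Keeping track of the exponents and the constants here—rather than any conceptual difficulty—is the crux.
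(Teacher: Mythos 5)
Your proposal is correct and follows essentially the same route as the paper: the fundamental-theorem-of-calculus decomposition $F(u_n)-F(u_n-u_0)=\int_0^1 F'\bigl(u_n+(t-1)u_0\bigr)u_0\,dt$ combined with a uniform-integrability/Vitali argument for \eqref{conv1}, and then Lemmas \ref{le:concen} and \ref{le:Lions} applied to $u_n-u_0$ for the second claim. The only divergence is in the $N=4$ bookkeeping: the paper bounds $|u_n+(t-1)u_0|$ by $v_n:=|u_n|+|u_0|$ and uses a three-factor H\"older estimate together with Lemma \ref{Th14RS}, rather than your Young-type splitting of the exponent, which as written needs a small repair (the factor $e^{\alpha(1+1/\eta)u_0^2}$ alone lies in no $L^p(\R^4)$, so the subtracted constants must be kept, and Corollary \ref{corTh14RS} does not apply directly to a product involving two different functions), but this is a fixable detail within the same strategy.
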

\begin{proof}
Let us begin with the case $N\ge5$.\\
Note preliminarily that for every measurable $\Omega\subset\R^N$ and every $t\in[0,1]$
\[\begin{split}
\int_\Omega \big|F'\bigl(u_n + (t-1)u_0\bigr)u_0\big| \, dx & \le C\int_\Omega \bigl(|u_n + (t-1)u_0| + |u_n + (t-1)u_0|^{2^{**}-1}\bigr)|u_0| \, dx\\
& \le C\left( \||u_n| + |u_0|\|_2 \|u_0\|_{L^2(\Omega)} + \||u_n| + |u_0|\|_{2^{**}}^{2^{**}-1} \|u_0\|_{L^{2^{**}}(\Omega)} \right)\\
& \le C\left(\|u_0\|_{L^2(\Omega)} + \|u_0\|_{L^{2^{**}}(\Omega)}\right)
\end{split}\]
for some $C>0$ that does not depend on $n$ or $\Omega$. Therefore using Vitali's Theorem we obtain
\[\begin{split}
\irn \big(F(u_n) - F(u_n-u_0)\big) \, dx & = \irn \int_0^1 F'\bigl(u_n + (t-1)u_0\bigr)u_0 \, dt \, dx\\
& \to \irn \int_0^1 F'(tu_0)u_0 \, dt \, dx = \irn F(u_0) \, dx,
\end{split}\]
and so \eqref{conv1} is proved.\\
If $N=4$, for every measurable $\Omega\subset\R^N$, every $t\in[0,1]$, $\alpha>0$, and for  $\sigma\ge 2$  as in the assumptions, there holds
\[
\int_\Omega \big|F'\bigl(u_n + (t-1)u_0\bigr)u_0\big| \, dx \le C\int_\Omega \left(|u_n| + |u_0| + \left(e^{\alpha(|u_n| + |u_0|)^2} - 1\right) (|u_n| + |u_0|)^{\sigma-1}\right)|u_0| \, dx.
\]
Obviously
\[
\int_\Omega (|u_n| + |u_0|)|u_0| \, dx \le C\|u_0\|_{L^2(\Omega)}
\]
for some $C>0$ that does not depend on $n$ or $\Omega$. Moreover, let us write $v_n := |u_n| + |u_0|$ and let $M>0$ be such that $\|v_n\| \le M$.
We can choose $\a>0$ and  $p_1,p_2,p_3>1$ such that   $1/p_1 + 1/p_2 + 1/p_3 = 1$, $\a p_1 M^2\le 32\pi^2$,  $p_2 \ge 2/(\sigma-1)$, and $p_3\ge2$,  so that, from Lemma \ref{Th14RS}, the sequence $\{e^{\alpha p_1 v_n^2} - 1\}$ is bounded in $L^1(\RQ)$, obtaining
\[\begin{split}
\int_\Omega \left(e^{\alpha v_n^2} - 1\right) v_n^{\sigma-1}|u_0| \, dx & \le \left(\int_{\RQ} \left(e^{\alpha v_n^2} - 1\right)^{p_1}  dx\right)^{1/p_1} \|v_n\|_{(\sigma-1)p_2}^{\sigma-1} \|u_0\|_{L^{p_3}(\Omega)}\\
& \le \left(\int_{\RQ}\big( e^{\alpha p_1 v_n^2} - 1\big) dx\right)^{1/p_1} \|v_n\|_{(\sigma-1)p_2}^{\sigma-1} \|u_0\|_{L^{p_3}(\Omega)}
\\
&\le C'\|u_0\|_{L^{p_3}(\Omega)}
\end{split}\]
for some $C'>0$ not depending on $n$ and concluding as before. Note that such a choice of $\a, p_1,p_2,p_3$ is possible by taking $\a $ sufficiently small, $p_1$ sufficiently close to $1$, and $p_2,p_3$ sufficiently large.  
\\
Finally, if $N\in\{2,3\}$, in view of the embedding $\Hd\hookrightarrow L^\infty(\RN)$, there exists $T>0$ such that $\sup_n\|u_n\|_\infty \le T$ and $\widetilde C = \widetilde C(T)>0$ such that
\[
|F'(s)| \le \widetilde C|s| \quad \text{for all } s\in[-2T,2T].
\]
Hence, in a similar way as above, for every measurable $\Omega\subset\R^N$ and every $t\in[0,1]$, we get
\[
\int_\Omega \big|F'\bigl(u_n + (t-1)u_0\bigr)u_0\big| \, dx \le C \|u_0\|_{L^2(\Omega)}
\]
for some $C>0$ that does not depend on $n$ or $\Omega$ and conclude as before.
\\
Now let us move to the second part, assuming that all the $u_n$ and $u_0$ are $\cO$-invariant. Since \eqref{conv1} holds, it is enough to prove that
\[
\irn F(u_n - u_0) \, dx \to 0,
\]
but this is true in virtue of Lemmas \ref{le:Lions} and \ref{le:concen}.
\end{proof}

Applying Proposition \ref{pr:Jarek} to the function $F(s) = |s|^p$, we get\begin{corollary}\label{co:emb}
Let $N\ge2$ and $\cO\subset\cO(N)$ a subgroup compatible with $\RN$. Then $H^2_\cO(\RN) \hookrightarrow\hookrightarrow L^p(\RN)$ for every $p\in(2,2^{**})$.
\end{corollary}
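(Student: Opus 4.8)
The plan is to apply Proposition \ref{pr:Jarek} to the function $F(s) = |s|^p$, which is of class $C^1$ since $p > 2$, with $F(0) = 0$ and $F'(s) = p|s|^{p-2}s$. Take an arbitrary bounded sequence $\{u_n\} \subset H^2_\cO(\RN)$. Because $H^2_\cO(\RN)$ is a closed subspace of the Hilbert space $\Hd$, up to a subsequence $u_n \weakto u_0$ in $H^2_\cO(\RN)$ for some $\cO$-invariant $u_0 \in \Hd$; by the Rellich--Kondrachov theorem on balls (and a diagonal argument over an exhausting sequence of balls) we may also assume $u_n \to u_0$ a.e. in $\RN$. Note that $u_0 \in L^p(\RN)$ by the Sobolev embedding $\Hd \hookrightarrow L^p(\RN)$, valid for $p \in (2,2^{**})$.

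Next I would verify that $F$ satisfies the growth hypotheses of Proposition \ref{pr:Jarek}. If $N\ge5$, since $1 < p-1 < 2^{**}-1$ one has $|F'(s)| = p|s|^{p-1} \le p\bigl(|s| + |s|^{2^{**}-1}\bigr)$, bounding by the first term when $|s|\le1$ and by the second when $|s|>1$. If $N=4$, given $\alpha>0$ I would choose $\sigma = p \ge 2$ and $R>0$ with $e^{\alpha R^2}-1\ge1$: then $p|s|^{p-1} \le pR^{p-2}|s|$ for $|s|\le R$ and $p|s|^{p-1} = p|s|^{\sigma-1} \le p\bigl(e^{\alpha s^2}-1\bigr)|s|^{\sigma-1}$ for $|s|>R$. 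If $N\in\{2,3\}$, then $|F'(s)| = p|s|^{p-1} \le p|s|$ on $[-1,1]$ since $p-1\ge1$. The additional conditions needed for the second conclusion hold as well: $F(s)/s^2 = |s|^{p-2}\to0$ as $s\to0$ because $p>2$; $F(s)/|s|^{2^{**}} = |s|^{p-2^{**}}\to0$ as $|s|\to+\infty$ when $N\ge5$ because $p<2^{**}$; and $F(s)/e^{\alpha s^2} = |s|^p/e^{\alpha s^2}\to0$ as $|s|\to+\infty$ for every $\alpha>0$ when $N=4$.

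Since moreover all the $u_n$ and $u_0$ are $\cO$-invariant, Proposition \ref{pr:Jarek} gives
\[
\lim_n \irn |u_n|^p \, dx = \irn |u_0|^p \, dx,
\]
that is, $\|u_n\|_p \to \|u_0\|_p$. Combining this with $u_n \weakto u_0$ in $L^p(\RN)$ (the embedding $H^2_\cO(\RN)\hookrightarrow L^p(\RN)$ being linear and continuous, hence weak-to-weak continuous) and the uniform convexity of $L^p(\RN)$ for $1<p<\infty$, we conclude $u_n \to u_0$ strongly in $L^p(\RN)$; alternatively, strong convergence follows from $u_n\to u_0$ a.e. together with $\|u_n\|_p\to\|u_0\|_p<+\infty$. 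As every bounded sequence in $H^2_\cO(\RN)$ thus admits a strongly $L^p$-convergent subsequence, the embedding is compact.

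This argument is essentially a packaging of Proposition \ref{pr:Jarek}; I do not expect a genuine obstacle. The only point requiring a little care is checking the growth bound on $F'$ in the borderline dimension $N=4$, handled by the splitting $|s|\le R$ versus $|s|>R$ above, and noting that for $p\in(2,2^{**})$ the choice $\sigma=p$ already satisfies $\sigma\ge2$.
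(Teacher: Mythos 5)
Your proof is correct and is essentially the paper's argument: the paper's proof consists precisely of applying Proposition \ref{pr:Jarek} to $F(s)=|s|^p$, and you have simply filled in the verification of its hypotheses and the passage from norm plus weak (or a.e.) convergence to strong $L^p$ convergence. Note only that one can shortcut your last step by combining the two conclusions of Proposition \ref{pr:Jarek}, namely \eqref{conv1} together with $\irn F(u_n)\,dx \to \irn F(u_0)\,dx$, which yields $\irn |u_n-u_0|^p\,dx \to 0$ directly without invoking uniform convexity or the Brezis--Lieb-type argument.
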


In a similar way to Proposition \ref{pr:Jarek}, the following further compactness result  for $F'$ holds.

\begin{proposition}\label{pr:Jarek'}
Let $F\in C^1(\RN)$ be such that $F(0)=0$ and
\begin{alignat*}{2}
	& \lim_{s \to 0} \frac{F'(s)}{|s|} = \lim_{|s|\to+\infty} \frac{F'(s)}{|s|^{2^{**}-1}} = 0 && \text{ when } N\ge5,\\
	& \lim_{s \to 0} \frac{F'(s)}{|s|} = \lim_{|s|\to+\infty} \frac{F'(s)}{e^{\alpha s^{2}}} = 0 \ \ \text{ for all } \alpha>0 \ \ \ &&  \text{ when } N=4,\\
	& \lim_{s \to 0} \frac{F'(s)}{|s|} = 0 && \text{ when } N\in\{2,3\},
\end{alignat*}
and let $\{u_n\}$ be a bounded sequence of $\cO$-invariant functions in  $\Hd$, for a suitable subgroup  $\cO\subset\cO(N)$ compatible with $\RN$, such that $u_n \to u_0$ a.e. in $\RN$ for some $u_0\in\Hd$.\\
Then
\begin{equation*}
	\lim_n \irn F'(u_n)u_n \, dx = \irn F'(u_0)u_0 \, dx.
\end{equation*}
\end{proposition}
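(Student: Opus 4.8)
The plan is to mirror the second part of Proposition~\ref{pr:Jarek}: reduce the assertion to the equi-integrability of the sequence $\{F'(u_n)u_n\}$ over $\RN$ and then invoke Vitali's convergence theorem. The reason one cannot apply Proposition~\ref{pr:Jarek} verbatim is that $s\mapsto sF'(s)$ need not be of class $C^1$; this is circumvented by using Corollary~\ref{co:emb} in place of the Lions-type Lemma~\ref{le:Lions}, which is legitimate precisely because the limit hypotheses on $F'$ make $s\mapsto sF'(s)$ strictly \emph{subcritical}. Concretely, I would first note that, $\{u_n\}$ being bounded in $H^2_\cO(\RN)$ with $u_n\to u_0$ a.e.\ and $u_0\in\Hd$, Corollary~\ref{co:emb} yields $u_n\to u_0$ in $L^p(\RN)$ for every $p\in(2,2^{**})$ (relative compactness, upgraded to convergence of the whole sequence by the usual subsequence argument, the limit being identified through the a.e.\ convergence). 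Since $F'$ is continuous, $F'(u_n)u_n\to F'(u_0)u_0$ a.e., and the growth hypotheses give $F'(u_0)u_0\in L^1(\RN)$; hence it suffices to prove that for every $\eta>0$ there are $\delta>0$ and $R>0$ such that $\sup_n\int_A|F'(u_n)u_n|\,dx<\eta$ whenever $|A|<\delta$ or $A\subseteq\RN\setminus B_R$.

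The core step is an elementary pointwise bound: for every $\varepsilon>0$ there is $c_\varepsilon>0$ with $|F'(s)s|\le\varepsilon\,a(s)+c_\varepsilon\,b(s)$ for all $s\in\R$, where, using continuity of $F'$ together with the limit assumptions (and, for $N\in\{2,3\}$, the embedding $\Hd\hookrightarrow L^\infty(\RN)$ with $T:=\sup_n\|u_n\|_\infty<\infty$), one may take $a(s)=s^2+|s|^{2^{**}}$ and $b(s)=|s|^p$ for some fixed $p\in(2,2^{**})$ if $N\ge5$; $a(s)=s^2$ and $b(s)=s^2(e^{\alpha s^2}-1)$ with $\alpha>0$ small if $N=4$; and $a(s)=s^2$, $b(s)=|s|^3$ (valid on $[-T,T]$, which contains the ranges of all the $u_n$ and of $u_0$) if $N\in\{2,3\}$. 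In every case $\{a(u_n)\}$ is bounded in $L^1(\RN)$ by the Sobolev embeddings, whereas $\{b(u_n)\}$ is equi-integrable and uniformly tight in $L^1(\RN)$: for $N\ge5$ and $N\in\{2,3\}$ this is immediate from $u_n\to u_0$ in $L^p(\RN)$, resp.\ $L^3(\RN)$ (a convergent sequence in $L^1$ is equi-integrable and tight); for $N=4$ one combines H\"older's inequality with Lemma~\ref{Th14RS} — in the guise $\|e^{\alpha u_n^2}-1\|_r^r\le\|e^{r\alpha u_n^2}-1\|_1\le C$, valid once $\alpha$ is so small that $r\alpha\sup_n\|u_n\|^2\le32\pi^2$ for the finitely many exponents $r$ that occur — together with the compact embeddings $H^2_\cO(\R^4)\hookrightarrow\hookrightarrow L^q(\R^4)$, $q\in(2,\infty)$, of Corollary~\ref{co:emb}. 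Given $\eta>0$, one then picks $\varepsilon$ so that $\varepsilon\sup_n\|a(u_n)\|_1<\eta/2$ and afterwards $\delta,R$ so that $c_\varepsilon\sup_n\int_A b(u_n)\,dx<\eta/2$ on the sets in question; this gives the required equi-integrability, and Vitali's theorem concludes (in fact it yields $F'(u_n)u_n\to F'(u_0)u_0$ in $L^1(\RN)$).

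I expect the only real obstacle to be the case $N=4$, where the exponential growth of $F'$ has to be absorbed through the Adams-type inequality of Lemma~\ref{Th14RS}. The delicate part is to choose the H\"older exponents and the parameter $\alpha$ simultaneously, so that the exponential factor stays bounded in the relevant $L^r(\R^4)$ while the accompanying power of $u_n$ lands in a subcritical $L^q(\R^4)$ where Corollary~\ref{co:emb} supplies compactness; moreover one must treat the two sub-estimates slightly differently — isolating a power of $|A|$ for sets of small measure, and using the uniform tightness of $\{u_n\}$ in $L^q(\R^4)$ for far-out annuli. All the remaining verifications are routine applications of Hölder's inequality and of the embeddings already recorded in the excerpt.
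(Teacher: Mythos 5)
Your argument is correct, and it reaches a slightly stronger conclusion ($F'(u_n)u_n\to F'(u_0)u_0$ in $L^1(\RN)$) than the statement asks for. The difference with the paper is organizational rather than substantive: the paper writes $\irn\big(F'(u_n)u_n-F'(u_0)u_0\big)\,dx$ by adding and subtracting $F'(u_n)u_0$, disposes of $\irn|F'(u_n)-F'(u_0)||u_0|\,dx$ by the same Vitali argument used in Proposition \ref{pr:Jarek} (domination on arbitrary measurable sets through $\|u_0\|_{L^q(\Omega)}$, with Lemma \ref{Th14RS} when $N=4$), and controls $\irn|F'(u_n)||u_n-u_0|\,dx$ by the $\e(\text{critical})+c_\e(\text{subcritical})$ bound, H\"older's inequality, and the strong convergence $u_n\to u_0$ in $L^p(\RN)$, $p\in(2,2^{**})$, coming from Corollary \ref{co:emb}; you instead apply the $\e$--$c_\e$ splitting directly to $|F'(s)s|$ and prove uniform integrability and tightness of $\{F'(u_n)u_n\}$, concluding by Vitali with the a.e.\ convergence. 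Both routes rest on exactly the same pillars — the subcritical decomposition, the compact embedding of Corollary \ref{co:emb} (whole-sequence convergence being recovered from the a.e.\ limit, as you note), and the Adams-type Lemma \ref{Th14RS} via the trick $(e^s-1)^t\le e^{st}-1$ of Corollary \ref{corTh14RS} when $N=4$ — so nothing new is needed; your version avoids the add-and-subtract step and any reference back to the proof of Proposition \ref{pr:Jarek}, at the price of spelling out the equi-integrability/tightness bookkeeping (small sets and complements of large balls) that the paper's H\"older-based estimate sidesteps. Your handling of the delicate $N=4$ case (fixing $\a$ and the H\"older exponents in terms of $M=\sup_n\|u_n\|$ before choosing $\e$, so that $e^{\a u_n^2}-1$ stays bounded in the relevant $L^r(\RQ)$ while the power factor lands in a subcritical $L^q(\RQ)$) is exactly the choice the paper also makes, and the order of quantifiers is sound.
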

\begin{proof}
	As in the proof of Proposition \ref{pr:Jarek}, from Vitali's Theorem
	\[\begin{split}
		\left|\irn \big(F'(u_n)u_n - F'(u_0)u_0\big)  dx\right| & \le \irn |F'(u_n)-F'(u_0)||u_0| \, dx + \irn |F'(u_n)||u_n-u_0| \, dx\\
		& = o_n(1) + \irn |F'(u_n)||u_n-u_0| \, dx.
	\end{split}\]
	Fix $p\in(2,2^{**})$.
	Since $u_0$ and all the $u_n$ are $\cO$-invariant, from Corollary \ref{co:emb} we deduce that $\lim_n \|u_n - u_0\|_p = 0$.\\
	Assume first that $N\ge 5$ and let $\e>0$. There exists $c_\e>0$ such that for every $s\in\R$
	\[
	|F'(s)| \le \e(|s| + |s|^{2^{**}-1}) + c_\e|s|^{p-1}.
	\]
	Whence there exists $C>0$ not depending on $\e$ such that for every sufficiently large $n$
	\[
		\irn |F'(u_n)||u_n-u_0| \, dx  \le \e(\|u_n\|_2\|u_n - u_0\|_2 + \|u_n\|_{2^{**}}^{2^{**} -1}\|u_n - u_0\|_{2^{**}}) + c_\e\|u_n\|_p^{p-1}\|u_n - u_0\|_p\\
		 \le C \e
	\]
and so we conclude.
\\
	If $N=4$, for every $\e,\alpha>0$ and $\sigma \ge 2$ there exists $c_\e = c_\e(\alpha, \sigma)>0$ such that for every $s\in\R$
	\[
	|F'(s)| \le \e|s| + c_\e\left(e^{\alpha s^2}-1\right)|s|^{\sigma-1}.
	\]
	Then, taking $M>0$ such that $\|u_n\|\le M$ for every $n$ and $\a<32\pi^2/M^2$, arguing again as in the proof of Proposition \ref{pr:Jarek} we obtain
	\[
	\irn \left(e^{\a u_n^2}-1\right)|u_n|^{\s-1}|u_n - u_0| \, dx \le C\|u_n - u_0\|_{p_3}
	\]
	with $p_3>2$ and $C>0$ not depending on $n$, whence for every sufficiently large $n$
	\[
	\irn |F'(u_n)||u_n-u_0| \, dx \le \eps\|u_n\|_2\|u_n - u_0\|_2 + c_\e C\|u_n - u_0\|_{p_3} \le C\e.
	\]
	Finally, if $N\in\{2,3\}$, fix $p>2$. For every $\e>0$ there exists $c_\e>0$ such that for every $s\in[-T,T]$
	\[
	|F'(s)| \le \e|s| + c_\e|s|^{p-1},
	\]
	where $T>0$ is such that $\|u_n\|_\infty\le T$, and we conclude as in the case $N\ge5$.
\end{proof}

\subsection{Proofs of Theorem \ref{th:main} and Proposition \ref{pr:main}}\label{p+}
\

Following \cite{HIT}, we fix  $m'\in (0,m)$ such that $\beta > -2\sqrt{m'}$, where $m$ is defined in \eqref{g2}, $q\in(2, 2^{**})$, and introduce the functions $h\colon\R\to\R$ and $\overline h\colon\R\to\R$ as
\begin{equation*}
h(s) := \left(m's + g(s)\right)_+ \quad\text{ and } \quad \overline h(s) :=
\begin{cases}
s^{q-1}\sup_{0<t\le s}\frac{h(t)}{t^{q-1}} & \, \text{if } s>0\\
0 & \, \text{if } s=0
\end{cases}
\end{equation*}
for $s\ge0$, extending them oddly for $s<0$. Let us define 
$$H(s) := \int_0^sh(t)\,dt \quad \text{ and } \quad \overline H(s) := \int_0^s\bar h(t)\,dt.$$

In a similar way to \cite[Lemma 2.1, Corollary 2.2]{HIT} we can prove as follows.
\begin{lemma}\label{le:h}
The following properties hold.
\begin{enumerate}[label=(\alph{*}),ref=\alph{*}]
	\item \label{a211} There exists $\delta_0>0$ such that $\overline H(s) = \overline h(s) = H(s) = h(s) = 0$ for every $s\in[-\delta_0,\delta_0]$.
	\item \label{b211} The functions $h$ and $\overline h$ satisfy \eqref{g3}. Moreover, if $N\ge5$, then 
	$$\lim_{s\to+\infty}\frac{\overline H(s)}{s^{2^{**}}} = \lim_{s\to+\infty}\frac{H(s)}{s^{2^{**}}} = 0;$$
	if $N=4$, then for every $\a>0$ $$\lim_{s\to+\infty}\frac{\overline{H}(s)}{e^{\a s^2}} = \lim_{s\to+\infty}\frac{H(s)}{e^{\a s^2}} = 0.$$
	\item \label{c211} For every $s\ge0$, we have that $\overline h(s) \ge h(s) \ge g(s) + m's$ and $\overline H(s) \ge H(s) \ge G(s) + m's^2/2$.
	\item \label{d211} The function $s\mapsto\overline{h}(s)/s^{q-1}$ is non-decreasing on $(0,+\infty)$ and $\overline{h}(s)s \ge q\overline{H}(s) \ge 0$ for all $s\in\R$.
\end{enumerate}
\end{lemma}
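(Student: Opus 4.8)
The plan is to verify the four items in turn by unwinding the definitions of $h$, $\overline h$, $H$, $\overline H$, closely following \cite[Lemma 2.1, Corollary 2.2]{HIT}. For (\ref{a211}) I would use \eqref{g2}: since $\limsup_{s\to0}g(s)/s=-m<-m'$, there is $\delta_1>0$ such that $g(s)/s<-m'$, hence $m's+g(s)<0$, for all $0<|s|<\delta_1$; therefore $h\equiv0$ on $(-\delta_1,\delta_1)$ by oddness, then $\overline h\equiv0$ there as well (the supremum $\sup_{0<t\le s}h(t)/t^{q-1}$ vanishes for $0<s<\delta_1$), and finally $H\equiv\overline H\equiv0$ on $(-\delta_1,\delta_1)$ by integration. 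One then takes $\delta_0:=\delta_1/2$ to obtain a closed interval.

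Items (\ref{c211}) and the monotonicity part of (\ref{d211}) are immediate and I would do them next: for $s>0$ one has $h(s)=(m's+g(s))_+\ge m's+g(s)$ and, inserting $t=s$ into the supremum defining $\overline h$, $\overline h(s)\ge h(s)$, while all quantities vanish at $s=0$ (recall $g(0)=0$); integrating over $[0,s]$ gives $\overline H\ge H\ge G+m'(\cdot)^2/2$. The map $s\mapsto\overline h(s)/s^{q-1}=\sup_{0<t\le s}h(t)/t^{q-1}$ is non-decreasing because the supremum runs over a growing set. For the inequality $\overline h(s)s\ge q\overline H(s)\ge0$, set $\psi:=\overline h(\cdot)/(\cdot)^{q-1}\ge0$, which is non-decreasing; then for $s>0$ one has $\overline H(s)=\int_0^s t^{q-1}\psi(t)\,dt\le\psi(s)s^q/q=\overline h(s)s/q$ and $\overline H(s)\ge0$, and the case $s<0$ reduces to $s>0$ since $\overline h$ is odd and $\overline H$ even, so that $s\overline h(s)=|s|\overline h(|s|)$ and $\overline H(s)=\overline H(|s|)$.

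For (\ref{b211}), the growth of $h$ follows at once from $0\le h(s)\le m's+|g(s)|$ for $s>0$ together with \eqref{g3}, the linear term being negligible both against $s^{2^{**}-1}$ and against $e^{\alpha s^2}$. For $\overline h$ I would control $\psi(s)=\sup_{0<t\le s}h(t)/t^{q-1}$: given $\e>0$, choose $T\ge1$ with $h(t)\le\e t^{2^{**}-1}$ for $t\ge T$ if $N\ge5$ (resp. $h(t)\le\e e^{\alpha't^2}$ for $t\ge T$, with a fixed $\alpha'\in(0,\alpha)$, if $N=4$); on $(0,T]$ the ratio $h(t)/t^{q-1}$ is bounded by a constant $C_T$ (continuous and vanishing near $0$ by (\ref{a211})), while on $[T,s]$ it is $\le\e s^{2^{**}-q}$ (resp. $\le\e e^{\alpha's^2}$). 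Multiplying by $s^{q-1}$ one gets $\overline h(s)/s^{2^{**}-1}\le\max\{C_Ts^{q-2^{**}},\e\}$ (resp. $\overline h(s)/e^{\alpha s^2}\le C_Ts^{q-1}e^{-\alpha s^2}+\e s^{q-1}e^{-(\alpha-\alpha')s^2}$), so the $\limsup$ as $s\to+\infty$ is $\le\e$, hence $0$; this shows $\overline h$ satisfies \eqref{g3}. The statements for $H$ and $\overline H$ then follow by integrating the analogous pointwise bounds over $[0,s]$, using $\int_T^s e^{\alpha't^2}\,dt\le e^{\alpha's^2}/(2\alpha'T)$ when $N=4$.

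I expect the only genuinely delicate point to be this last exponential estimate for $N=4$ in (\ref{b211}): one cannot compare $e^{\alpha t^2}$ with itself and must instead trade the exponent down to some $\alpha'<\alpha$, so that the decay $e^{-(\alpha-\alpha')s^2}$ absorbs the polynomial factor $s^{q-1}$ that the definition of $\overline h$ forces out; everything else is routine bookkeeping with the definitions.
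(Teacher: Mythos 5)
Your proof is correct and follows essentially the same elementary route the paper takes, which is simply to adapt \cite[Lemma 2.1, Corollary 2.2]{HIT}: unwinding the definitions of $h$, $\overline h$, $H$, $\overline H$ and using \eqref{g2}, \eqref{g3} together with $m'<m$ and $q<2^{**}$. In particular, your treatment of the only point that goes beyond \cite{HIT} --- the $N=4$ exponential estimates, where you trade $e^{\alpha s^2}$ for $e^{\alpha' s^2}$ with $\alpha'<\alpha$ so that the polynomial factor $s^{q-1}$ and the integral bound $\int_T^s e^{\alpha' t^2}\,dt\le e^{\alpha' s^2}/(2\alpha' T)$ are absorbed --- is sound.
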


Note that, in view of Lemma \ref{le:h},  $\overline h$ and $\overline H$ are well defined  and there holds
\begin{subequations}\label{boundh}
\begin{alignat}{2}
	&\exists C>0 \text{ such that } \overline{h}(s) \le C s^{2^{**}-1} \hbox{ for }  s \ge 0,
	&&\text{ if } N\ge5, \label{boundha}\\
	&\forall \a>0, \s \ge 2\, \exists C>0 \text{ such that } \overline{h}(s) \le C \big(e^{\alpha s^{2}}-1\big)s^{\s-1} \hbox{ for }  s \ge 0,\ \ 
	&&\text{ if } N=4, \label{boundhb}\\
	&\forall T>0, \s \ge 2\, \exists C>0 \text{ such that } \overline{h}(s) \le C s^{\s-1}  \hbox{ for } s \in [0,T],
	&&\text{ if } N\in \{2,3\}, \label{boundhc}
\end{alignat}
\end{subequations}
and
\begin{subequations}\label{boundH}
\begin{alignat}{2}
	&\exists C>0 \text{ such that } \overline{H}(s) \le C|s|^{2^{**}} \hbox{ for } s\in \R,
	&&\text{ if } N\ge5, \label{boundHa} \\
	&\forall \a>0, \s \ge 2\, \exists C>0 \text{ such that } \overline{H}(s) \le C \big(e^{\alpha s^{2}}-1\big)|s|^{\s} \hbox{ for } s\in \R,\ \
	&& \text{ if } N=4, \label{boundHb} \\
	&\forall T>0, \s \ge 2\, \exists C>0 \text{ such that } \overline{H}(s) \le C |s|^{\s} \hbox{ for }  s \in [-T,T],\ \ 
	&&\text{ if } N\in \{2,3\}. \label{boundHc}
\end{alignat}
\end{subequations}
The very same estimates hold for $h$ and $H$ respectively.

We then introduce a comparison $C^1$ functional $\overline{I}\colon\Hd\to\R$ as
\[
\overline{I}(u) := \frac12 \irn \left[(\Delta u)^2 + \b |\nabla u|^2 + m'u^2 \right]dx - \irn \overline{H}(u) \, dx .
\]
Now we can prove the following (cf. \cite[Lemmas 2.4 and 2.5]{HIT}).

\begin{proposition}\label{pr:PS2}
The functionals $I$ and $\overline{I}$ satisfy:
\begin{enumerate}[label=(\alph{*}),ref=\alph{*}]
	\item \label{aPS2} $\overline{I} \le I$;
	\item\label{bPS2} there exist $\rho,\mu>0$ such that $I(u) \ge \overline{I}(u) \ge \mu$ for every $\|u\| = \rho$ and $I(u) \ge \overline{I}(u) \ge 0$ for every $\|u\| \le \rho$;
	\item \label{cPS2} for every integer $k\ge1$ there exists an odd map $\gamma_k \in C(\mathbb{S}^{k-1},\cH)$ such that $\overline I\circ\gamma_k \le I\circ\gamma_k < 0$;
	\item \label{dPS2} $\overline{I}$ satisfies the Palais--Smale condition if restricted to $\cH$.
\end{enumerate}
\end{proposition}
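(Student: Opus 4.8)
\textbf{Proof plan for Proposition \ref{pr:PS2}.}

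The plan is to verify the four items essentially in order, exploiting the machinery already set up.

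\emph{Item \ref{aPS2}.} By Lemma \ref{le:h}\eqref{c211} we have $\overline H(s) \ge G(s) + \frac{m'}{2}s^2$ for all $s$, hence
\[
\overline I(u) = \frac12\|u\|^2 - \irn \overline H(u)\,dx \le \frac12\|u\|^2 - \irn\Big(G(u)+\frac{m'}{2}u^2\Big)dx = I(u),
\]
where $\|u\|$ is the equivalent norm with weight $m'$ introduced in the functional framework. This is immediate.

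\emph{Item \ref{bPS2}.} In view of \ref{aPS2} it suffices to bound $\overline I$ from below near the origin. Using the estimates \eqref{boundH}, together with the continuous Sobolev embeddings of $\cH$ (Corollary \ref{coemb} in the zero mass case, the equivalence of norms in the positive mass case) and, when $N=4$, Corollary \ref{corTh14RS} applied with a radius $M$ comparable to $\rho$, one gets $\irn \overline H(u)\,dx \le C\|u\|^{\sigma}$ for some $\sigma>2$ (one chooses $\sigma=q$ when $N\in\{2,3\}$, $\sigma=2^{**}$ when $N\ge5$, and any $\sigma>2$ when $N=4$, picking $\tau>1$ so that $\alpha\tau\rho^2\le32\pi^2$). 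Therefore $\overline I(u)\ge\frac12\|u\|^2 - C\|u\|^\sigma = \|u\|^2(\frac12 - C\|u\|^{\sigma-2})$, which is positive for $\|u\|$ small and bounded below by some $\mu>0$ on a sphere $\|u\|=\rho$ with $\rho$ chosen small enough; combining with \ref{aPS2} gives the claim for $I$ as well.

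\emph{Item \ref{cPS2}.} This is the genus/linking step producing arbitrarily many negative-energy directions. Using \eqref{g4} (so $G(s_0)>0$ for some $s_0\ne0$) I would fix a $k$-dimensional subspace of $\cH$ spanned by functions with disjoint supports, each a suitably rescaled bump taking values near $\pm s_0$ on a large set; by a change of scale $u\mapsto u(\cdot/\lambda)$ or by dilating the plateau, the term $\irn G(u)\,dx$ can be made to dominate the quadratic part (the classical Berestycki--Lions trick, cf. \cite{BL,BL2}), so that $I<0$ on the unit sphere of this subspace. Composing with an odd homeomorphism of $\mathbb S^{k-1}$ onto that sphere yields the odd map $\gamma_k$; since $\overline I\le I$ by \ref{aPS2}, the same map works for $\overline I$. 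The only care needed is that the rescaled bumps actually lie in $\cH$, i.e. are $\cO(N)$-invariant (resp. in $X$); for the radial case one takes radial bumps on disjoint annuli, and for the non-radial space $H^2_X$ one pairs up bumps using the defining antisymmetry.

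\emph{Item \ref{dPS2}.} This is the main obstacle. Let $\{u_n\}\subset\cH$ be a Palais--Smale sequence for $\overline I|_\cH$ at some level. First, boundedness: because $s\mapsto\overline h(s)/s^{q-1}$ is non-decreasing and $\overline h(s)s\ge q\overline H(s)\ge0$ with $q>2$ (Lemma \ref{le:h}\eqref{d211}), the standard computation $\overline I(u_n)-\frac1q\overline I'(u_n)[u_n]\ge(\frac12-\frac1q)\|u_n\|^2$ gives $\|u_n\|\le C$ — here the fact that $\overline h$, unlike $g$, satisfies an Ambrosetti--Rabinowitz-type condition is exactly why the comparison functional was introduced. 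Hence, up to a subsequence, $u_n\weakto u_0$ in $\cH$ and $u_n\to u_0$ a.e. The function $F$ with $F'=\overline h$ satisfies the hypotheses of Proposition \ref{pr:Jarek'} (its growth is governed by \eqref{boundh}, and by Lemma \ref{le:h}\eqref{a211}--\eqref{b211} it vanishes near $0$ and has the prescribed behaviour at infinity), so, using the $\cO$-invariance, $\irn\overline h(u_n)u_n\,dx\to\irn\overline h(u_0)u_0\,dx$; similarly $\irn\overline h(u_n)u_0\,dx\to\irn\overline h(u_0)u_0\,dx$ (e.g. by Proposition \ref{pr:Jarek'} or directly, since $\overline h(u_n)\to\overline h(u_0)$ in the appropriate dual space). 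Then from $\overline I'(u_n)[u_n]=o_n(1)$ and $\overline I'(u_n)[u_0]=o_n(1)$ one obtains $\|u_n\|^2\to\irn\overline h(u_0)u_0\,dx$ and $\langle u_n,u_0\rangle\to\irn\overline h(u_0)u_0\,dx=\|u_0\|^2$ (the last equality because $u_0$ is a critical point of $\overline I|_\cH$ by weak continuity of $\overline I'$ on the invariant subspace, again via Proposition \ref{pr:Jarek'}). Thus $\|u_n\|\to\|u_0\|$ and $u_n\weakto u_0$ in a Hilbert space, whence $u_n\to u_0$ strongly. The delicate point throughout is ensuring the compactness inputs (Propositions \ref{pr:Jarek}, \ref{pr:Jarek'}) apply, which is why every statement is restricted to the symmetric subspace $\cH$ and why the growth conditions were carefully matched to \eqref{boundh}--\eqref{boundH}.
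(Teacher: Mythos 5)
Your plan follows the paper's proof of Proposition \ref{pr:PS2} essentially step by step: (\ref{aPS2}) from Lemma \ref{le:h}~(\ref{c211}); (\ref{bPS2}) from the bounds \eqref{boundH} plus Sobolev embeddings and, for $N=4$, Corollary \ref{corTh14RS}; (\ref{dPS2}) by the Ambrosetti--Rabinowitz-type property in Lemma \ref{le:h}~(\ref{d211}) for boundedness and then Proposition \ref{pr:Jarek'} for compactness (the paper states this in one line; your expanded argument, including $\|u_n\|^2\to\irn\overline h(u_0)u_0\,dx=\|u_0\|^2$ and weak-plus-norm convergence in a Hilbert space, is exactly what that line compresses).

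The one point to be careful about is your sketch of (\ref{cPS2}). Taking the unit sphere of a $k$-dimensional subspace spanned by disjoint-support bumps with plateau value $s_0$ does not by itself give $\irn G(u)\,dx>0$: for $u=\sum_i\xi_i w_i$ with $\sum_i\xi_i^2=1$, the plateau of the $i$-th bump contributes roughly $|\{w_i=s_0\}|\,G(\xi_i s_0)$, and $G(\xi_i s_0)$ can be negative for intermediate values of $\xi_i$ (indeed $G(s)\sim-\frac m2 s^2<0$ near $0$, and \eqref{g4} gives positivity only at $s_0$), so the sum need not be positive on the whole sphere. This is precisely why the paper does not use a linear span but instead invokes the genuinely nonlinear odd maps $\pi_k\in C(\mathbb{S}^{k-1},\cH)$ with $\irn G(\pi_k(\xi))\,dx\ge1$ constructed in \cite[Proof of Theorem 10]{BL2} (radial case) and \cite[Proof of Lemma 3.4]{jl} (the $X$-symmetric case), smoothed so as to lie in $H^2(\RN)$, and only then applies the dilation $\gamma_k(\xi)=\pi_k(\xi)(\cdot/\lambda)$, for which $I(\gamma_k(\xi))\le\frac{\lambda^{N-4}}{2}\|\Delta\pi_k(\xi)\|_2^2+\frac{\beta\lambda^{N-2}}{2}\|\nabla\pi_k(\xi)\|_2^2-\lambda^N<0$ for $\lambda$ large. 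Since you ultimately defer to the Berestycki--Lions construction anyway, this is an imprecision in the sketch rather than a different (or unworkable) route, but as written the ``span of bumps'' step would fail and should be replaced by the cited constructions.
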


\begin{proof}
(\ref{aPS2}) It follows from Lemma \ref{le:h} (\ref{c211}).\\
(\ref{bPS2}) In virtue of point (\ref{aPS2}), it suffices to prove the statement for $\overline{I}$.\\
Assume first $N\ge5$.
From \eqref{boundHa} 
there exists $C>0$ such that, for every $u\in \Hd$,
\[
\overline I(u)\ge \frac 12 \|\Delta u\|_2^2
+\frac \b 2 \|\n u\|_2^2
+ \frac{m'}{2}\|u\|_2^2
-C \|u\|_{2^{**}}^{2^{**}},
\]
so the statement follows from the classical Sobolev embedding.\\
Now let $N=4$. If $\alpha\in(0,32\pi^2)$ and $\sigma>2$, then from Corollary \ref{corTh14RS} and \eqref{boundHb} there exists $C>0$ such that for every $u\in\Hd$ with $\|u\|\le1$
\[\begin{split}
\overline I(u) & \ge \frac 12 \|\Delta u\|_2^2 + \frac \b 2 \|\n u\|_2^2 + \frac{m'}{2}\|u\|_2^2 - C \irn (e^{\a u^2}-1)|u|^\s \, dx\\
& \ge \frac 12 \|\Delta u\|_2^2 + \frac \b2 \|\n u\|_2^2 + \frac{m'}{2}\|u\|_2^2 - C\|u\|_{\frac{\s\tau}{\tau-1}}^\s
\end{split}\]
for some fixed $\tau \in (1,32\pi^2/\a]$. So again the statement follows from the Sobolev embedding.\\
Finally, let $N\in\{2,3\}$ and fix $T>0$ such that $\|u\|_\infty\le T$ for every $u\in\Hd$ with $\|u\|\le1$.
From \eqref{boundHc} with $\sigma=3$, there exists $C>0$ such that, for every $u\in\Hd$ with $\|u\|\le1$,
\[
\overline I(u) \ge \frac 12 \|\Delta u\|_2^2
+\frac \b2 \|\n u\|_2^2
+\frac{m'}{2}\|u\|_2^2
-C \|u\|_3^3
\]
and we conclude as before.\\
(\ref{cPS2})
Again, in view of point (\ref{aPS2}), it is enough to prove the statement for $I$. Arguing in a similar way\footnote{In particular, one can smooth the piecewise affine functions considered therein so that they belong to $H^2(\RN)$.}
to \cite[Proof of Theorem 10]{BL2} (if $\cH := H_{\cO(N)}^2(\RN)$) or \cite[Proof of Lemma 3.4]{jl} (if $\cH := H_X^2(\RN)$), for every integer $k\ge1$ there exists an odd map $\pi_k \in C(\mathbb{S}^{k-1},\cH)$ such that $\irn G\bigl(\pi_k(\xi)\bigr) \, dx \ge 1$ for every $\xi\in\mathbb{S}^{k-1}$. Let $\lambda>0$ and define $\gamma_k(\xi):= \pi_k(\xi)(\cdot/\lambda)$. We have
\[\begin{split}
I\bigl(\gamma_k(\xi)\bigr) & = \frac{\lambda^{N-4}}{2}\|\Delta\pi_k(\xi)\|_2^2 + \frac{\b \lambda^{N-2}}{2}\|\n\pi_k(\xi)\|_2^2 - \lambda^N \irn G\bigl(\pi_k(\xi)\bigr) \, dx\\
& \le \frac{\lambda^{N-4}}{2}\|\Delta\pi_k(\xi)\|_2^2 + \frac{\b\lambda^{N-2}}{2}\|\n\pi_k(\xi)\|_2^2 - \lambda^N,
\end{split}\]
thus the statement holds for sufficiently large $\lambda$.\\
(\ref{dPS2})
Owing to Lemma \ref{le:h} (\ref{d211}), every Palais--Smale sequence for $\overline{I}$ is bounded,
hence the assertion follows from Proposition \ref{pr:Jarek'}.
\end{proof}

Let
\[
\Gamma_k := \{ \gamma\in C(\B^k,\cH) : \gamma \text{ is odd and } \gamma|_{\partial\B^k} = \gamma_k \},
\]
where $\gamma_k\colon\mathbb{S}^{k-1}\to \cH$ is given in Proposition \ref{pr:PS2} (\ref{cPS2}). Observe that $\Gamma_k\ne\emptyset$ because $\bar\gamma_k\in\Gamma_k$, where
\[
\bar{\gamma}_k(\xi) :=
\begin{cases}
	|\xi|\gamma_k\left(\frac{\xi}{|\xi|}\right) & \text{ if } \xi\ne0\\
	0 & \text{ if } \xi=0.
\end{cases}
\]
Define
\[
\sigma_k := \inf_{\gamma \in \Gamma_k}\sup_{\xi\in\B^k}I\bigl(\gamma(\xi)\bigr), \qquad c_k := \inf_{\gamma \in \Gamma_k}\sup_{\xi\in\B^k}\overline{I}\bigl(\gamma(\xi)\bigr).
\]
Using Proposition \ref{pr:PS2} we check that $\sigma_k \ge c_k \ge \mu$ for every $k\ge1$.\\
We will prove that each $\sigma_k$ is a critical value of $I$.

In order to have that $\lim_k \sigma_k = +\infty$ we use the following result.

\begin{proposition}\label{pr:infty}
We have that $\lim_k c_k = +\infty$.
\end{proposition}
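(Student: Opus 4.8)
The plan is to follow the classical argument (due to Heinz, and adapted in \cite{HIT}) that relates the min--max levels $c_k$ of the comparison functional $\overline I$ to a diverging quantity controlled by the spectrum of the "linear part" of $\overline I$ restricted to a suitable space. First I would observe that, by Proposition \ref{pr:PS2} (a)--(d), $\overline I$ is an even $C^1$ functional on $\cH$ satisfying the Palais--Smale condition, with $\overline I(0)=0$, the mountain-pass geometry near $0$ given by (b), and $\overline I\circ\gamma_k<0$ for the odd maps $\gamma_k$ from (c); hence the $c_k$ are well defined, nondecreasing in $k$, and bounded below by $\mu>0$. The key is that $\overline I$ is, up to the perturbation $-\int_{\RN}\overline H(u)\,dx$, a positive-definite quadratic form: writing $Q(u):=\frac12\int_{\RN}[(\Delta u)^2+\beta|\nabla u|^2+m'u^2]\,dx=\frac12\|u\|^2$ on $\cH=H^2_{\cO(N)}(\RN)$ or $H^2_X(\RN)$, we have $\overline I=Q-\Psi$ with $\Psi(u)=\int_{\RN}\overline H(u)\,dx\ge0$, and by Lemma \ref{le:h} (b) together with \eqref{boundHa}--\eqref{boundHc}, $\Psi$ is weakly continuous on $\cH$ (this is exactly Proposition \ref{pr:Jarek} applied to $F=\overline H$, using the compact embedding in the symmetric subspace; for $N=2,3$ one uses the $L^\infty$-control on bounded sequences).

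Next I would invoke the abstract critical-point result relating such min--max levels to a genus-type quantity: for a functional of the form $\overline I=Q-\Psi$ on a Hilbert space, with $Q$ a coercive quadratic form, $\Psi\ge0$ weakly continuous, and the geometry of Proposition \ref{pr:PS2}, one has
\[
c_k \ge \inf\Bigl\{ Q(u) : u\in\cH,\ \Psi(u)\ge 1,\ u\in V_k^\perp \Bigr\}
\]
for an appropriate decreasing sequence of finite-codimensional subspaces $V_k^\perp$, or more precisely $c_k\to+\infty$ follows from the fact that for any $R>0$ the set $\{u : \overline I(u)\ge 0,\ \|u\|\le R\}$ cannot contain sets of arbitrarily large genus. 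Concretely, I would argue by contradiction: suppose $c_k\le c_\infty<+\infty$ for all $k$. For each $k$ pick $\gamma\in\Gamma_k$ with $\sup_{\B^k}\overline I\circ\gamma\le c_\infty+1$. Using the Palais--Smale condition and a standard deformation argument, one produces, at each level in $[\mu,c_\infty+1]$, only finitely many critical points of $\overline I$ modulo the $\Z_2$-action; but the oddness of $\gamma|_{\partial\B^k}=\gamma_k$ forces $\gamma(\B^k)$ to carry a $\Z_2$-set of genus $\ge k$ lying in $\{\overline I\le c_\infty+1\}$, and intersecting the sphere $\{\|u\|=\rho\}$ this set has genus $\ge k-1$ inside $\{\overline I\ge\mu\}\cap\{\|u\|=\rho\}$. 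Since $\Psi$ is weakly continuous and $Q$ is weakly lower semicontinuous, the set $\{u\in\cH : \|u\|=\rho,\ \overline I(u)\ge\mu\}$ is weakly closed and, crucially, it is contained in a set on which $\Psi$ is bounded away from $0$; by compactness of the symmetric embedding this set has \emph{finite} genus, a contradiction for $k$ large.

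Alternatively — and this is probably the cleanest route to write — I would mimic \cite[proof of the divergence of the min--max levels]{HIT} verbatim: introduce a sequence of symmetric subsets $A_k\subset\cH$ of genus $\ge k$ obtained from the images of $\gamma_k$, use that $\overline h(s)/s^{q-1}$ is nondecreasing (Lemma \ref{le:h} (d)) so that $\overline H$ has a genuine superquadratic lower bound $\overline H(s)\ge c|s|^q$ for $|s|\ge\delta_0$, hence $\Psi(u)\ge c\|u\|_q^q - C$; then $c_k$ is estimated from below by the min--max level of the purely power-type functional $u\mapsto \frac12\|u\|^2 - c\|u\|_q^q$ on $\cH$, whose min--max levels are known to diverge because $H^2_\cO(\RN)\hookrightarrow\hookrightarrow L^q(\RN)$ (Corollary \ref{co:emb}) forces $\sup\{\|u\|_q : \|u\|\le 1,\ u\in V_k^\perp\}\to 0$ as $k\to\infty$, where $\{V_k\}$ is an increasing sequence of finite-dimensional subspaces with $\overline{\bigcup_k V_k}=\cH$. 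The main obstacle is the verification that the abstract machinery applies cleanly in the non-radial space $H^2_X(\RN)$ as well as in $H^2_{\cO(N)}(\RN)$: one must make sure the compact embedding (Corollary \ref{co:emb}), the Palais--Smale condition (Proposition \ref{pr:PS2} (d)), and the oddness/genus bookkeeping all go through uniformly in both settings — which they do, since the entire construction only used the symmetry structure through Lemmas \ref{le:Lions} and \ref{le:concen}. Apart from this, everything is routine and I would refer to \cite{HIT} for the details.
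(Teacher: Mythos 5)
Your proposal does not close the argument; both concrete mechanisms you offer break down, and the device the paper actually relies on is missing. In your contradiction sketch, the decisive step is the claim that $\{u\in\cH : \|u\|=\rho,\ \overline I(u)\ge\mu\}$ is contained in a region where $\Psi(u)=\int_{\RN}\overline H(u)\,dx$ is bounded away from $0$ and hence has finite genus. This is false: by Proposition \ref{pr:PS2}~(\ref{bPS2}) \emph{every} $u$ with $\|u\|=\rho$ satisfies $\overline I(u)\ge\mu$, so the set in question is the whole sphere $\{\|u\|=\rho\}$ in the infinite-dimensional space $\cH$, which has infinite genus; moreover $\Psi$ vanishes on it on a large set (e.g.\ whenever $\|u\|_\infty\le\delta_0$, by Lemma \ref{le:h}~(\ref{a211})), so no contradiction arises. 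Also, the Palais--Smale condition gives compactness, not finiteness, of critical sets at a level, so ``only finitely many critical points modulo the $\Z_2$-action'' is not available. Your ``cleanest'' alternative has the inequality in the wrong direction: the superquadratic \emph{lower} bound $\overline H(s)\ge c|s|^q-C$ coming from Lemma \ref{le:h}~(\ref{d211}) bounds $\overline I$ from \emph{above}, hence cannot bound $c_k$ from below. To run the classical finite-codimension estimate one would need an \emph{upper} bound $\overline H(s)\le c|s|^q$ with $q$ in the compact range, which is unavailable here: by \eqref{boundHa}--\eqref{boundHc} the growth of $\overline H$ at infinity is critical ($|s|^{2^{**}}$ for $N\ge5$), exponential ($N=4$), or completely unrestricted ($N\in\{2,3\}$), and the embedding $\cH\hookrightarrow L^{2^{**}}(\RN)$ is not compact, so $\sup\{\|u\|_{2^{**}}:\|u\|\le1,\ u\in V_k^\perp\}$ does not tend to $0$ and the comparison with $u\mapsto\frac12\|u\|^2-c\|u\|_q^q$ does not yield divergence.

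What is needed instead, and what the paper does, is purely qualitative: introduce the Rabinowitz-type families
$\Sigma_k=\{\gamma(\overline{\B^m\setminus Y}): \gamma\in\Gamma_m,\ m\ge k,\ Y=\overline Y=-Y\subset\R^m\setminus\{0\},\ \mathfrak{g}(Y)\le m-k\}$
and the associated values $d_k=\inf_{A\in\Sigma_k}\sup_A\overline I$. The enlargement (allowing $m\ge k$ and the removal of sets $Y$ of genus at most $m-k$) is exactly what restores monotonicity of $\{d_k\}$ and makes the deformation/genus bookkeeping work, since the families $\Gamma_k$ themselves are not nested (so even your assertion that $c_k$ is nondecreasing is unjustified). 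One then observes $c_k\ge d_k$ (take $m=k$, $Y=\emptyset$) and adapts the argument of \cite[Proof of Theorem 9.12]{Rabinowitz}: if $d_k\to \bar b<+\infty$, the Palais--Smale condition for $\overline I$ on $\cH$ (Proposition \ref{pr:PS2}~(\ref{dPS2})) gives a \emph{compact} critical set at level $\bar b$, whose genus is finite; deforming a nearly optimal $A\in\Sigma_{k+p}$ outside a neighbourhood of that critical set and discarding that neighbourhood (admissible because its genus is at most $p$) produces an element of $\Sigma_k$ below the level $\bar b-\eps$, a contradiction. This is the finite-genus object your first sketch was groping for; it is the critical set at the limiting level, not a sphere-level set, and identifying it requires the auxiliary families $\Sigma_k$, which your proposal never introduces.
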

\begin{proof}
For every integer $k\ge1$ consider the family of subsets of $\cH$ given by
\[
\Sigma_k := \{ \gamma(\overline{\B^m\setminus Y}) : \gamma\in\Gamma_m, \, m\ge k, \, \R^m\setminus\{0\}\supset Y=\overline{Y}=-Y, \, \mathfrak{g}(Y)\le m-k \},
\]
where $\mathfrak{g}$ is the Krasnosel'skij genus (cf., e.g., \cite[Chapter II, Section 5]{Struwe}). Then we define the sequence of values
\[
d_k := \inf_{A \in \Sigma_k}\sup_{u \in A}\overline{I}(u).
\]
It is clear that $\{d_k\}$ is nondecreasing. Moreover, since $\gamma(\B^k)\in\Sigma_k$ for every $\gamma\in\Gamma_k$ (i.e., taking $m=k$ and $Y=\emptyset$), there holds $c_k\ge d_k$. Finally, in view of Proposition \ref{pr:PS2} (\ref{dPS2}), one can adapt the argument of \cite[Proof of Theorem 9.12]{Rabinowitz}
and obtain that $\lim_kd_k=+\infty$, concluding the proof.
\end{proof}

Following \cite{HIT}, 
we introduce an auxiliary functional $J\in C^1\big(\R\times H^2(\RN),\R\big)$ given by
\begin{equation}\label{J}
J(s,u)=\frac {e^{s(N-4)}} 2 \|\Delta u\|_2^2 + \frac {\b e^{s(N-2)}} 2\|\n u\|_2^2 - e^{sN} \irn G(u) \, dx.
\end{equation}
For all $(s,u)\in \R\times \Hd$,
\begin{align*}
	J(0,u)&=I(u),
	\\
	J (s,u)&=I\big(u (e^{-s} \cdot)\big).
\end{align*}
We equip $\R\times \Hd$ with the standard product norm $\|(s,u)\|_{\R\times \Hd}=(|s|^2+\|u\|^2)^{1/2}$ and define a sequence of minimax values for $J$ as
\[
\widetilde{\sigma}_k := \inf_{\widetilde\gamma \in \widetilde{\Gamma}_k} \max_{\xi\in\B^k} J\bigl(\widetilde{\gamma}(\xi)\bigr),
\]
\[
\widetilde{\Gamma}_k := \left\{ \widetilde{\gamma} = (\widetilde{\gamma}_1,\widetilde{\gamma}_2) \in C(\B^k,\R\times \cH) : \widetilde{\gamma}_1 \text{ is even, } \widetilde{\gamma}_2 \text{ is odd, and } \widetilde{\gamma}|_{\de\B^k} = (0,\gamma_k) \right\},\]
$\gamma_k$ being given in Proposition \ref{pr:PS2} (\ref{cPS2}). Arguing as in \cite[Section 4]{HIT} we have that $\widetilde{\sigma}_k = \sigma_k$ for every $k\ge1$ and the following properties hold.

\begin{proposition}\label{pr:sqJ}
For every integer $k\ge1$ there exists a sequence $\{(s_n,u_n)\} \subset \R \times \cH$
such that
\begin{enumerate}[label=(\arabic{*}),ref=\arabic{*}]
	\item \label{1prop216} $\lim_n s_n =0$;
	\item  \label{2prop216} $\lim_n J(s_n,u_n) = \sigma_k$;
	\item  \label{3prop216} $\lim_n \de_sJ(s_n,u_n) = 0$;
	\item  \label{4prop216} $\lim_n \de_uJ(s_n,u_n) = 0$ in $\cH^*$.
\end{enumerate}
\end{proposition}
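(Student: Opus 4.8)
The plan is to derive Proposition \ref{pr:sqJ} by applying a minimax principle (in the spirit of \cite{jj} and \cite[Section 4]{HIT}) to the augmented functional $J$ on the enlarged space $\R\times\cH$, exploiting that the extra variable $s$ records precisely the scaling $u\mapsto u(e^{-s}\cdot)$, so that the family $\widetilde\Gamma_k$ of admissible maps is invariant under the deformation flow of $J$ and the levels $\widetilde\sigma_k$ are well defined and finite (they equal $\sigma_k\ge\mu>0$ by the comparison with $\overline I$ in Proposition \ref{pr:PS2}). The key point is that $\widetilde\Gamma_k\ne\emptyset$: given $\gamma_k\in C(\mathbb S^{k-1},\cH)$ from Proposition \ref{pr:PS2}(\ref{cPS2}), one radially extends it to $\widetilde\gamma=(0,\bar\gamma_k)$, and one checks, as in \cite{HIT}, that any $\widetilde\gamma\in\widetilde\Gamma_k$ satisfies $J(\widetilde\gamma(\xi))=I(\gamma(\xi))$ on $\partial\B^k$ (because $s=0$ there) and that $\sup_{\partial\B^k}J\circ\widetilde\gamma=\sup_{\mathbb S^{k-1}}I\circ\gamma_k<0<\mu\le\widetilde\sigma_k$, so the sup over $\B^k$ is attained in the interior and the boundary cannot interfere.

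First I would record the structural facts: $J\in C^1(\R\times\Hd)$, $J(0,u)=I(u)$, $J(s,u)=I(u(e^{-s}\cdot))$, and the principle of symmetric criticality lets us work on $\R\times\cH$; I would also note that $\widetilde\sigma_k=\sigma_k$ by a direct comparison of the two minimax classes (sending $\widetilde\gamma=(\widetilde\gamma_1,\widetilde\gamma_2)$ to $\xi\mapsto\widetilde\gamma_2(\xi)(e^{-\widetilde\gamma_1(\xi)}\cdot)$ gives an element of $\Gamma_k$ with the same sup value, and conversely one embeds $\Gamma_k$ into $\widetilde\Gamma_k$ via $\gamma\mapsto(0,\gamma)$). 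Then the core of the argument is a quantitative deformation lemma / Ekeland-type variational principle on $\R\times\cH$: if no Palais--Smale sequence at level $\widetilde\sigma_k$ with the four stated properties existed, then $\|\nabla J\|$ would be bounded below by a positive constant on a neighbourhood (intersected with $\{|s|\text{ small}\}$) of the sublevel region near $\widetilde\sigma_k$, and one could push any near-optimal $\widetilde\gamma$ below $\widetilde\sigma_k$ by the negative gradient flow of $J$ while preserving membership in $\widetilde\Gamma_k$ (the flow keeps $\widetilde\gamma_1$ even and $\widetilde\gamma_2$ odd, and fixes the boundary because $J$ is bounded away from $\widetilde\sigma_k>0$ there), contradicting the definition of $\widetilde\sigma_k$. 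The extra condition $s_n\to0$ comes for free: since $\de_sJ(s,u)=\frac{N-4}2e^{s(N-4)}\|\Delta u\|_2^2+\frac{\b(N-2)}2e^{s(N-2)}\|\n u\|_2^2-Ne^{sN}\int G(u)$, combining $\de_sJ\to0$ with $J\to\widetilde\sigma_k>0$ forces the rescaled norms to stay in a bounded region, and a standard argument (as in \cite{HIT,jj}) shows the minimizing sequence may be chosen with $s_n\to0$, e.g. by first shifting each admissible path so that its optimal point has $s$-coordinate near $0$, using that $\widetilde\gamma_1$ is continuous and even.

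I would carry out the steps in this order: (i) verify $\widetilde\Gamma_k\ne\emptyset$ and that every $\widetilde\gamma\in\widetilde\Gamma_k$ agrees with $I$ on the boundary and hence $\sup_{\B^k}J\circ\widetilde\gamma$ is an interior sup bounded below by $\mu$; (ii) prove $\widetilde\sigma_k=\sigma_k$ so that the levels are the ones we ultimately want to show are critical for $I$; (iii) set up the deformation/Ekeland argument on $\R\times\cH$ producing, for each $k$, a sequence $\{(s_n,u_n)\}$ with $J(s_n,u_n)\to\widetilde\sigma_k$, $\de_sJ(s_n,u_n)\to0$, and $\de_uJ(s_n,u_n)\to0$ in $\cH^*$; (iv) upgrade the sequence to also satisfy $s_n\to0$ by the rescaling/shifting trick. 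The main obstacle I anticipate is step (iii)–(iv), that is, constructing the deformation while simultaneously controlling the $s$-variable: one must ensure the gradient flow does not drift $s$ too far (so that $s_n\to0$ survives) and that the symmetry constraints ($\widetilde\gamma_1$ even, $\widetilde\gamma_2$ odd) together with the boundary condition $\widetilde\gamma|_{\de\B^k}=(0,\gamma_k)$ are preserved throughout; this is exactly the technical heart of the $\cite{jj}$-type construction adapted to the present functional, and it is where the two-variable device pays off, because the $s$-derivative of $J$ is what will later yield the almost-Pohozaev identity used to regain boundedness of the Palais--Smale sequence in $\cH$.
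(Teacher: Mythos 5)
Your proposal follows essentially the same route as the paper, which proves Proposition \ref{pr:sqJ} precisely by invoking the argument of \cite[Section 4]{HIT}: the augmented functional $J$, the identity $\widetilde{\sigma}_k=\sigma_k$ obtained by comparing the classes via $\gamma\mapsto(0,\gamma)$ and $(\widetilde\gamma_1,\widetilde\gamma_2)\mapsto\widetilde\gamma_2(e^{-\widetilde\gamma_1}\cdot)$, and an Ekeland/deformation argument on $\R\times\cH$ preserving the symmetry and boundary constraints. One small correction to your sketch: $s_n\to0$ is not obtained by ``shifting'' admissible paths in $s$ (this would violate $\widetilde\gamma|_{\partial\B^k}=(0,\gamma_k)$), but from the fact that, since $\widetilde{\sigma}_k=\sigma_k$, one may take near-optimal paths of the form $(0,\gamma_n)$ with $\gamma_n\in\Gamma_k$, and the Ekeland/deformation principle produces Palais--Smale points within distance $o(1)$ of $\{0\}\times\gamma_n(\B^k)$ in the product norm, which forces $|s_n|\to0$.
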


We now prove some fundamental properties of the sequence found in Proposition \ref{pr:sqJ}.
\begin{lemma}\label{le:bdd}
If $N\ge3$ or \eqref{AR} is satisfied, then $\{u_n\}$ is bounded, where $\{u_n\} \subset \cH$ is given in Proposition \ref{pr:sqJ}.
\end{lemma}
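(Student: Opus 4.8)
The plan is to exploit the extra information carried by the sequence $\{(s_n,u_n)\}$ coming from Proposition~\ref{pr:sqJ}, namely that $\partial_s J(s_n,u_n) \to 0$, which plays the role of an approximate Pohozaev identity, together with $\partial_u J(s_n,u_n) \to 0$ in $\cH^*$ tested against $u_n$, which is an approximate Nehari identity. Writing out $\partial_s J(s_n,u_n)$ and using $s_n\to0$, we get
\[
\frac{N-4}{2}\|\Delta u_n\|_2^2 + \frac{\beta(N-2)}{2}\|\nabla u_n\|_2^2 - N\irn G(u_n)\,dx = o_n(1),
\]
while $\langle\partial_u J(s_n,u_n),u_n\rangle = o_n(1)\|u_n\|$ gives
\[
\|\Delta u_n\|_2^2 + \beta\|\nabla u_n\|_2^2 - \irn g(u_n)u_n\,dx = o_n(1)\|u_n\|.
\]
Eliminating the $\int G(u_n)$ term (multiply the second relation by $N$ and subtract the first, suitably scaled) produces a linear combination of $\|\Delta u_n\|_2^2$ and $\beta\|\nabla u_n\|_2^2$ controlled by $\int\bigl(g(u_n)u_n - \gamma' G(u_n)\bigr)dx$ plus $o_n(1)(1+\|u_n\|)$, for an appropriate constant; the key point is that when $N\ge3$ both surviving coefficients in front of $\|\Delta u_n\|_2^2$ and $\beta\|\nabla u_n\|_2^2$ are \emph{positive}, so the left-hand side dominates $c\,(\|\Delta u_n\|_2^2 + \beta\|\nabla u_n\|_2^2)$. (When $N=2$ the dimensional coefficients degenerate and this is exactly where hypothesis~\eqref{AR}, an Ambrosetti--Rabinowitz-type superquadraticity condition, has to be invoked to make the nonlinear term have the right sign.)

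Next I would estimate the nonlinear remainder. Using \eqref{g2} (resp.\ \eqref{AR}) one controls $g(s)s - \gamma' G(s)$ from above by $C s^2$ near the origin up to a good sign, reducing matters to bounding $\|u_n\|_2^2$. For $N\ge3$ this is where the equivalent norm $\|u\| = (\|\Delta u\|_2^2 + \beta\|\nabla u\|_2^2 + m'\|u\|_2^2)^{1/2}$ on $\Hd$ and the continuous embeddings of Corollary~\ref{coemb}/Corollary~\ref{co:emb} enter: the $L^2$ term is absorbed into $\|u_n\|^2$, and after rearranging one arrives at an inequality of the form $\|u_n\|^2 \le C + C\|u_n\| + o_n(1)\|u_n\|^2$, which forces $\{\|u_n\|\}$ to be bounded. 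For $N=2$ the same conclusion follows from \eqref{AR}, since then $g(s)s + ms^2 - \gamma(G(s)+ \tfrac m2 s^2)\ge0$ gives directly that $\int G(u_n) + \tfrac m2\|u_n\|_2^2$ is controlled by $\tfrac1\gamma\int(g(u_n)u_n + m u_n^2)$, and combined with the two approximate identities and $J(s_n,u_n)\to\sigma_k$ one gets $(\tfrac12 - \tfrac1\gamma)$ times the quadratic part bounded, with $\tfrac12-\tfrac1\gamma>0$.

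The main obstacle I anticipate is the bookkeeping of signs and coefficients: because there are two differential operators of different orders, the approximate Pohozaev and Nehari identities mix $\|\Delta u_n\|_2^2$ and $\|\nabla u_n\|_2^2$ with dimension-dependent weights, and one must check that \emph{the same} linear combination simultaneously makes the principal (quadratic) part coercive and the nonlinear part sign-favourable. For $N\ge3$ this works because $N-4$, $N-2$, $N$ interact so that after eliminating $\int G$ the coefficient of $\|\Delta u_n\|_2^2$ is $(N-4)-\tfrac{N-4}{N}\cdot N$ adjusted, etc.\ — a short computation shows positivity — whereas for $N=2$ it genuinely fails without the superquadraticity \eqref{AR}. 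Once coercivity of the surviving quadratic form is established, absorbing the subcritical/subcritical-exponential lower-order terms via \eqref{g2}, \eqref{g3} and the embeddings is routine, and $J(s_n,u_n)\to\sigma_k<+\infty$ supplies the needed upper bound to close the estimate.
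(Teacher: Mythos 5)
Your first step goes through the wrong pair of identities. The combination that works is not Poho\v zaev $+$ Nehari but Poho\v zaev $+$ \emph{energy level}: items (\ref{2prop216}) and (\ref{3prop216}) of Proposition \ref{pr:sqJ} both contain $\irn G(u_n)\,dx$ with proportional coefficients ($e^{Ns_n}$ and $Ne^{Ns_n}$), so computing $N\,J(s_n,u_n)-\de_sJ(s_n,u_n)$ cancels the nonlinearity \emph{exactly} and gives $2e^{(N-4)s_n}\|\Delta u_n\|_2^2+\b e^{(N-2)s_n}\|\nabla u_n\|_2^2\to N\sigma_k$, with no nonlinear remainder at all; this is how the paper bounds $\|\Delta u_n\|_2$, $\|\nabla u_n\|_2$ and $\irn G(u_n)\,dx$. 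Your combination of $\de_sJ\to0$ with $\de_uJ(s_n,u_n)[u_n]=o(\|u_n\|)$ cannot ``eliminate $\int G$'' (the Nehari relation does not contain it); it leaves a remainder of the form $\irn\bigl(g(u_n)u_n-\gamma' G(u_n)\bigr)dx$, and under \eqref{g1}--\eqref{g4} alone there is no pointwise comparison between $g(s)s$ and $G(s)$ away from the origin -- such a comparison \emph{is} an Ambrosetti--Rabinowitz condition, i.e.\ \eqref{AR}, which is only assumed when $N=2$. So your scheme closes only in the \eqref{AR} case, where it reproduces the paper's argument $J(s_n,u_n)-\tfrac1\gamma\de_uJ(s_n,u_n)[u_n]\ge\tfrac12(\tfrac12-\tfrac1\gamma)\|u_n\|^2$.

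The second, and more serious, gap is the $L^2$-bound, which you dismiss as ``routine absorption.'' There is no coercive $L^2$ term anywhere in $J$ (the mass enters only through the behaviour of $G$ near $0$), and the Nehari relation only yields $\|u_n\|^2\le e^{Ns_n}\irn h(u_n)u_n\,dx+o(\|u_n\|)$ with $h(s)=(m's+g(s))_+$; controlling the right-hand side is precisely the difficulty, and no inequality of the type $\|u_n\|^2\le C+C\|u_n\|+o_n(1)\|u_n\|^2$ is ever derived in your sketch. The paper's actual mechanism for $N\ge4$ is a contradiction/rescaling argument: assuming $t_n:=\|u_n\|_2^{2/N}\to+\infty$, set $v_n:=u_n(t_n\cdot)$, show $v_n\weakto0$, and apply the symmetric compactness result Proposition \ref{pr:Jarek'} with $F'=h$ (together with Lemma \ref{le:h}, in particular $h\equiv0$ near $0$) to obtain $m'/2\le o_n(1)$, a contradiction; for $N=3$ one instead uses $\cd(\RT)\hookrightarrow L^\infty$ from Corollary \ref{coemb} (which involves only the already bounded seminorms) and $H(s)\le C|s|^6$ to bound $\tfrac{m'}2\|u_n\|_2^2$. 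Note that for $N=4$ your absorption cannot even get started: the Adams-type estimate (Corollary \ref{corTh14RS}) needed to control $\irn h(u_n)u_n\,dx$ presupposes a bound on the full $H^2$-norm, which is exactly what is being proven. So the proposal misses the key idea of the paper's proof for $N\ge3$ and its central estimate fails without \eqref{AR}.
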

\begin{proof}
Let us begin with the case $N\ge3$. Since (\ref{2prop216}) and (\ref{3prop216}) of Proposition \ref{pr:sqJ} read explicitly
\begin{align*}
&\frac{e^{(N-4)s_n}}{2}\|\Delta u_n\|_2^2 + \frac{\b e^{(N-2)s_n}}{2}\|\nabla u_n\|_2^2 - e^{Ns_n}\int_{\RN}G(u_n)\,dx \to \sigma_k,
\\
&\frac{N-4}{2}e^{(N-4)s_n}\|\Delta u_n\|_2^2 + \frac{N-2}{2}\b e^{(N-2)s_n}\|\nabla u_n\|_2^2 - Ne^{Ns_n}\int_{\RN}G(u_n)\,dx \to 0,
\end{align*}
we have
\begin{equation}\label{forlater}
2e^{(N-4)s_n}\|\Delta u_n\|_2^2 +\b  e^{(N-2)s_n}\|\nabla u_n\|_2^2 \to N\sigma_k
\end{equation}
and so, taking into account (\ref{1prop216}) of Proposition \ref{pr:sqJ}, 
$\{\|\Delta u_n\|_2\}, \{\|\nabla u_n\|_2\}, \text{ and } \left\{\int_{\RN}G(u_n)\,dx\right\}$ are bounded.
\\
Now we prove $\{u_n\}$ is bounded in $L^2(\RN)$ as well.\\
Assume first that $N\ge4$. By contradiction, let us suppose that, up to a subsequence, $t_n := \|u_n\|_2^{2/N}\to+\infty$ and define $v_n(x) := u_n(t_nx)$. Then
\[
\|v_n\|_2^2 = 1, \quad \|\nabla v_n\|_2^2 = t_n^{2-N} \|\nabla u_n\|_2^2, \quad \|\Delta v_n\|_2^2 = t_n^{4-N}\|\Delta u_n\|_2^2.
\]
Hence $\{v_n\}$ is bounded in $\Hd$. Since $|\nabla v_n| \to 0$ in $L^2(\RN)$, $v_n \rightharpoonup 0$ in $\Hd$.\\
Moreover
\begin{align*}
	&t_n^N\left|e^{(N-4)s_n} t_n^{-4} \|\Delta v_n\|_2^2 
	+ \beta e^{(N-2)s_n} t_n^{-2} \|\n v_n\|_2^2 - e^{Ns_n}  \irn g(v_n)v_n \, dx\right|
	\\
	&\qquad= \left|e^{(N-4)s_n} \|\Delta u_n\|_2^2 + \beta e^{(N-2)s_n} \|\n u_n\|_2^2 - e^{Ns_n} \irn g(u_n)u_n \, dx\right|
	\\
	&\qquad= \left|\de_uJ(s_n,u_n)[u_n] \right|\le \e_n \|u_n\|
	= \e_n \sqrt{t_n^{N-4}\|\Delta v_n\|_2^2 + \beta t_n^{N-2}\|\n v_n\|_2^2 + m' t_n^N}
\end{align*}
where $\e_n := \|\de_uJ(s_n,u_n)\|_* \to 0$ (due to Proposition \ref{pr:sqJ} (\ref{4prop216})) and $\|\cdot\|_*$ is the norm in $\cH^*$ induced by $\|\cdot\|$, obtaining
\[
\delta_n := e^{(N-4)s_n} t_n^{-4} \|\Delta v_n\|_2^2 + \b e^{(N-2)s_n} t_n^{-2} \|\n v_n\|_2^2 - e^{Ns_n} \irn g(v_n)v_n \, dx \to 0.
\]
Hence, in view of Proposition \ref{pr:Jarek'} with $F' = h$, Lemma \ref{le:h}, and (\ref{g1}), for $n$ large we have
\begin{align*}
\frac{m'}{2} & \le e^{(N-4)s_n} t_n^{-4} \|\Delta v_n\|_2^2 + \beta e^{(N-2)s_n} t_n^{-2} \|\n v_n\|_2^2 + m' e^{Ns_n}\\
& = e^{Ns_n} \irn [m' v_n^2 + g(v_n)v_n] \, dx + \delta_n\\
& \le e^{Ns_n} \irn h(v_n)v_n \, dx + \delta_n \to 0,
\end{align*}
which is a contradiction.\\
If $N=3$, since $\cd(\RT) \hookrightarrow L^\infty(\R^3)$ from Corollary \ref{coemb}, there exists $T>0$ such that $\|u_n\|_\infty\le T$ for every $n$. From Lemma \ref{le:h} (\ref{c211}) and \eqref{boundHc}, there exists $C>0$ such that
\[
\frac{m'}{2}\|u_n\|_2^2 + \irn G(u_n) \, dx \le \irn H(u_n) \, dx \le C \|u_n\|_6^6 \le C \|u_n\|_{\cd}^6
\]
and so, in particular, $\|u_n\|_2$ is bounded.\\
Finally, let us consider the case when \eqref{AR} holds. Observe that
\[
g(s)s - \gamma G(s) \ge m \left(\frac{\gamma}{2}-1\right) s^2 \ge m' \left(\frac{\gamma}{2}-1\right) s^2,
\]
hence \eqref{AR} still holds with $m'$ instead of $m$. Thus, for every sufficiently large $n$, there holds
\begin{align*}
\s_k+ 1 + \|u_n\| 
& \ge
J(s_n,u_n) - \frac1\gamma \de_uJ(s_n,u_n)[u_n]\\
&=
 \left(\frac12 - \frac1\gamma\right)
 [e^{s_n(N-4)} \|\Delta u_n\|_2^2 
+\beta e^{s_n(N-2)}\|\n u_n\|_2^2 
+ m' e^{s_nN} \|u_n\|_2^2]\\
&\quad
+ \frac1\gamma\irn e^{s_nN} \left[g(u_n)u_n -\gamma  G(u_n) - m' \left(\frac{\gamma}{2}-1\right) u_n^2 \right] \, dx\\
& \ge
\frac12 \left(\frac12 - \frac1\gamma\right) \|u_n\|^2
\end{align*}
and we conclude.
\end{proof}


\begin{lemma}\label{le:cpt}
If $N\ge3$ or \eqref{AR} is satisfied, then $\{u_n\}$ contains a convergent subsequence, where $\{u_n\} \subset \cH$ is given in Proposition \ref{pr:sqJ}.
\end{lemma}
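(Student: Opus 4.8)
The plan is to show that the bounded sequence $\{u_n\}$ from Proposition \ref{pr:sqJ} converges strongly in $\cH$ (up to a subsequence), reasoning along the lines of the proof of the Palais--Smale-type condition. First I would use the boundedness from Lemma \ref{le:bdd} together with the reflexivity of $\cH$ (recall $\cH$ is a closed subspace of a Hilbert space) to extract a subsequence with $u_n \weakto u_0$ weakly in $\cH$ and, by Corollary \ref{co:emb} (resp. Corollary \ref{coemb} and Lemma \ref{le:concen} in the zero-mass-flavoured embeddings, but here we are in the positive mass case so $\cH = H^2_{\cO(N)}(\RN)$ or $H^2_X(\RN)$), $u_n \to u_0$ strongly in $L^p(\RN)$ for every $p\in(2,2^{**})$, as well as $u_n \to u_0$ a.e. in $\RN$. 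In the case $N = 3$ I would additionally use the embedding $\cd(\RT)\hookrightarrow L^\infty(\R^3)$ from Corollary \ref{coemb}, and in the case $N = 4$ the Adams-type estimates (Lemma \ref{Th14RS} and Corollary \ref{corTh14RS}), to handle the nonlinear term.

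Next I would pass to the limit in the equation. From Proposition \ref{pr:sqJ} (\ref{1prop216}) we have $s_n \to 0$, so $e^{(N-4)s_n}, e^{(N-2)s_n}, e^{Ns_n} \to 1$. Testing $\de_uJ(s_n,u_n)$ against $u_n - u_0$ and using Proposition \ref{pr:sqJ} (\ref{4prop216}) gives
\[
o_n(1) = e^{(N-4)s_n}\irn \Delta u_n\,\Delta(u_n-u_0)\,dx + \beta e^{(N-2)s_n}\irn \n u_n\cdot\n(u_n-u_0)\,dx - e^{Ns_n}\irn g(u_n)(u_n-u_0)\,dx.
\]
The first two terms, after adding and subtracting the corresponding terms with $u_0$ in the left slot and using weak convergence, reduce to $\|\Delta(u_n-u_0)\|_2^2 + \beta\|\n(u_n-u_0)\|_2^2 + o_n(1)$; together with the $m'\|u_n-u_0\|_2^2$ term (which I would recover by also testing against $u_n - u_0$ in the full norm, or equivalently by noting that $m' u_n$ enters $g$-plus-$m'$-identity as in Lemma \ref{le:h}) this gives exactly $\|u_n - u_0\|^2 + o_n(1)$. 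It therefore suffices to show $\irn g(u_n)(u_n - u_0)\,dx \to 0$.

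For that last step, the strategy is to split $g(s) = g(s) + m's - m's = h(s) - \tilde g(s)$ using the truncation $h$ from Lemma \ref{le:h}, or more directly to invoke Proposition \ref{pr:Jarek'} with $F'$ chosen to dominate $g + m' s$ (using assumptions (\ref{g2}), (\ref{g3}), which give the needed behaviour of $g$ at $0$ and at $\infty$ so that $F'$ satisfies the hypotheses of that proposition) to conclude $\irn g(u_n)u_n\,dx \to \irn g(u_0)u_0\,dx$, and combine this with $\irn g(u_n)u_0\,dx \to \irn g(u_0)u_0\,dx$, which follows from the a.e. convergence, the growth bounds on $g$, and the strong $L^p$ (resp. $L^\infty$ when $N=3$) convergence via a Vitali-type argument exactly as in the proofs of Propositions \ref{pr:Jarek} and \ref{pr:Jarek'}. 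Hence $\irn g(u_n)(u_n-u_0)\,dx \to 0$ and $\|u_n - u_0\| \to 0$. The main obstacle is the $L^2$-part of the norm: unlike the genuinely subcritical terms, the $m'\|u_n-u_0\|_2^2$ contribution is not controlled by compactness of Sobolev embeddings alone, so one must either carry the $m's$ inside the nonlinearity (working with $h$ and Lemma \ref{le:h} as above) or, in the two low-dimensional / critical-dimensional cases ($N=3$ via $L^\infty$, $N=4$ via the Adams inequality), use the specialized embeddings; getting this bookkeeping right in all dimensions simultaneously is the delicate point, but each piece is already available from the compactness results proved earlier in the section.
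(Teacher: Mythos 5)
There is a genuine gap at the heart of your plan: the step ``it therefore suffices to show $\irn g(u_n)(u_n-u_0)\,dx\to0$'' (equivalently, your claim that $\irn g(u_n)u_n\,dx\to\irn g(u_0)u_0\,dx$ via Proposition \ref{pr:Jarek'} applied to something dominating $g+m'\,\mathrm{id}$) cannot be justified with the tools available. Proposition \ref{pr:Jarek'} requires $\lim_{s\to0}F'(s)/|s|=0$, and $g(s)+m's$ does not satisfy this: assumption \eqref{g2} only gives $\limsup_{s\to0}(g(s)+m's)/s\le m'-m<0$ and a finite liminf, so $g(s)+m's$ is in general of exact order $|s|$ near $0$ (with negative sign). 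Writing $g(s)+m's=h(s)-\phi(s)$ with $h=(m'\,\mathrm{id}+g)_+$ as in Lemma \ref{le:h} handles the positive part (since $h$ vanishes on $[-\delta_0,\delta_0]$, Proposition \ref{pr:Jarek'} applies to it), but the leftover $\phi\ge0$ behaves like $|s|$ near zero, so the terms $\irn\phi(u_n)(u_n-u_0)\,dx$ or $\irn\phi(u_n)u_n\,dx$ are essentially $L^2$-type pairings; since the compact embeddings of Corollary \ref{co:emb} only give strong convergence in $L^p$ for $p\in(2,2^{**})$ and there is no strong $L^2$ convergence at this stage (that is exactly what you are trying to prove), these integrals need not converge to the corresponding integrals of $u_0$ --- part of the $L^2$ mass may ``spread out'' without concentrating, which Lions-type lemmas do not exclude for integrands of quadratic order at $0$.

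The paper circumvents this by never asking for two-sided convergence of the untruncated nonlinear term. It first observes that $u_0$ solves \eqref{bilap}, hence satisfies the Nehari identity $\|u_0\|^2=\irn\bigl(m'u_0^2+g(u_0)u_0\bigr)dx$; then, testing $\de_uJ(s_n,u_n)$ against $u_n$ (not against $u_n-u_0$), it writes $\|u_n\|^2+o_n(1)=e^{Ns_n}\irn h(u_n)u_n\,dx-e^{Ns_n}\irn\bigl(h(u_n)u_n-m'u_n^2-g(u_n)u_n\bigr)dx+o_n(1)$, uses Proposition \ref{pr:Jarek'} with $F'=h$ for the first integral, and applies Fatou's Lemma to the \emph{nonnegative} integrand $h(u_n)u_n-m'u_n^2-g(u_n)u_n$ to get only a one-sided bound, $\liminf_n\irn(\cdots)\,dx\ge\irn\bigl(h(u_0)u_0-m'u_0^2-g(u_0)u_0\bigr)dx$. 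Combining these with the Nehari identity yields $\limsup_n\|u_n\|\le\|u_0\|$, which together with weak lower semicontinuity gives strong convergence. Any possible loss of $L^2$ mass only makes $\limsup\|u_n\|$ smaller, so the one-sided Fatou estimate suffices; your route, by contrast, needs an equality that is false in general. To repair your proposal, replace the ``test against $u_n-u_0$'' scheme by this $\limsup$-of-norms argument (Nehari identity for $u_0$ plus Fatou for the nonnegative remainder); the rest of your setup (boundedness, weak and a.e.\ convergence, the $N=3$ and $N=4$ embeddings) is consistent with the paper.
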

\begin{proof}
Since $\{u_n\}$ is bounded in $\Hd$ from Lemma \ref{le:bdd}, there exists $u_0\in H^2(\RN)$ such that, up to a subsequence, $u_n\weakto u_0$ in $\Hd$ and $u_n(x) \to u_0(x)$ for a.e. $x \in \RN$. Moreover, from (\ref{1prop216}) and (\ref{4prop216}) of Proposition \ref{pr:sqJ}, we easily see that $u_0$ is a solution to \eqref{bilap}; in particular,
\begin{equation}\label{nehari}
\|u_0\|^2 = \irn \left(m'u^2_0 + g(u_0)u_0\right)  dx.
\end{equation}
Again from (\ref{4prop216}) of Proposition \ref{pr:sqJ} and the boundedness of $\{u_n\}$ we obtain $$e^{(N-4)s_n}\|\Delta u_n\|_2^2 +\b  e^{(N-2)s_n}\|\n u_n\|_2^2 - e^{Ns_n}\irn g(u_n)u_n \, dx \to 0,$$ whence
\begin{equation}\label{3righe}
\begin{split}
&e^{(N-4)s_n}\|\Delta u_n\|_2^2 +\b  e^{(N-2)s_n}\|\nabla u_n\|_2^2 + m' e^{Ns_n}\|u_n\|_2^2 
\\
&\qquad= e^{Ns_n}\irn [m' u_n^2 + g(u_n)u_n] \, dx + o_n(1)
\\
&\qquad= e^{Ns_n}\irn h(u_n)u_n \, dx
- e^{Ns_n}\irn [h(u_n)u_n - m' u_n^2 - g(u_n)u_n] dx + o_n(1).
\end{split}
\end{equation}
From Proposition \ref{pr:Jarek'} with $F'= h$,
\begin{equation}\label{uno}
\irn h(u_n)u_n \, dx \to \irn h(u_0)u_0 \, dx,
\end{equation}
while from Fatou's Lemma and Lemma \ref{le:h} (\ref{c211}),
\begin{equation}\label{due}
\liminf_n \irn \left(h(u_n)u_n - m' u_n^2 - g(u_n)u_n \right) dx \ge \irn \left(h(u_0)u_0 - m' u_0^2 - g(u_0)u_0 \right) dx.
\end{equation}
Therefore, in virtue of \eqref{nehari}, \eqref{3righe}, \eqref{uno}, and \eqref{due}, $\limsup_n\|u_n\| \le \|u_0\|$ and we conclude that $u_n\to u_0$ in $\Hd$.
\end{proof}


Now we are ready to conclude this section.

\begin{proof}[Proofs of Theorem \ref{th:main} and Proposition \ref{pr:main}]
Fix $k\ge1$: we prove that $\s_k = \widetilde{\sigma}_k$ is a critical value of $I$. Let $\{(s_n,u_n)\} \subset \R \times \cH$ be the sequence from Proposition \ref{pr:sqJ}: from Lemma \ref{le:cpt}, there exists $u_0 \in \cH$ such that $u_n \to u_0$ along a subsequence. Recalling that $s_n \to 0$, there holds
\[
I(u_0) = J(0,u_0) = \sigma_k \quad \text{and} \quad I'(u_0) = \partial_uJ(0,u_0) = 0.\qedhere
\]
\end{proof}

\begin{remark}
Under the assumption \eqref{AR}, we do not need the comparison functional $\bar I$,  because we can prove directly that the sequence $\{\s_k\}$ diverges positively. 
\end{remark}

\section{The zero mass case}\label{3}

We recall that, throughout this section, $N\ge3$.

\subsection{The functional framework}
\
	
Let us start recalling the well-known Radial Lemma.
	\begin{lemma}[\!\!\!\!\!\;\;{\cite[Radial Lemma A.III]{BL}}]\label{le:radialBL}
		If $u\in \mathcal{D}^{1,2}(\RQ)$ is  radially symmetric, then 
		\begin{equation*}
			|u(x)|\le \frac{1}{\sqrt{2}\pi}\frac{\|\n u \|_2}{|x|},\qquad
			\text{for a.e. }x\in \RQ.
		\end{equation*}
	\end{lemma}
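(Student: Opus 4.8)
The plan is the classical two-step scheme: first establish the pointwise bound for smooth compactly supported radial functions by a one-dimensional Cauchy--Schwarz argument, then pass to the limit by density.

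First I would reduce to $u\in C_c^\infty(\RQ)$ radially symmetric. The radial subspace of $\mathcal D^{1,2}(\RQ)$ is the closure of the radial functions in $C_c^\infty(\RQ)$ (given $u_n\in C_c^\infty(\RQ)$ with $u_n\to u$ in $\mathcal D^{1,2}$, averaging each $u_n$ over $\cO(4)$ produces radial functions in $C_c^\infty(\RQ)$ still converging to $u$, since this averaging is a contraction for $\|\n\cdot\|_2$). So pick radial $u_n\in C_c^\infty(\RQ)$ with $u_n\to u$ in $\mathcal D^{1,2}(\RQ)$.

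For a radial $v\in C_c^\infty(\RQ)$, writing $v(x)=v(r)$ with $r=|x|$, the fundamental theorem of calculus gives $v(r)=-\int_r^{+\infty}v'(s)\,ds$ (the integral is finite since $v$ has compact support). Splitting $v'(s)=\bigl(v'(s)s^{3/2}\bigr)s^{-3/2}$ and applying the Cauchy--Schwarz inequality,
\[
|v(r)|\le\left(\int_r^{+\infty}|v'(s)|^2s^3\,ds\right)^{1/2}\left(\int_r^{+\infty}s^{-3}\,ds\right)^{1/2}
=\frac{1}{\sqrt2\,r}\left(\int_0^{+\infty}|v'(s)|^2s^3\,ds\right)^{1/2}.
\]
Passing to polar coordinates and recalling $\o_3=2\pi^2$, one has $\|\n v\|_2^2=\o_3\int_0^{+\infty}|v'(s)|^2s^3\,ds=2\pi^2\int_0^{+\infty}|v'(s)|^2s^3\,ds$, whence
\[
|v(r)|\le\frac{1}{\sqrt2\,r}\cdot\frac{\|\n v\|_2}{\sqrt2\,\pi}=\frac{\|\n v\|_2}{2\pi r}\le\frac{1}{\sqrt2\,\pi}\frac{\|\n v\|_2}{|x|}.
\]

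Finally I would pass to the limit. Since $u_n\to u$ in $\mathcal D^{1,2}(\RQ)$ and $\mathcal D^{1,2}(\RQ)\hookrightarrow L^{2^*}(\RQ)=L^4(\RQ)$, up to a subsequence $u_n\to u$ a.e.\ in $\RQ$, while $\|\n u_n\|_2\to\|\n u\|_2$. Applying the bound just proved to each $u_n$ and letting $n\to+\infty$ yields $|u(x)|\le\frac{1}{\sqrt2\,\pi}\|\n u\|_2/|x|$ for a.e.\ $x\in\RQ$. The only points requiring slight care are the density of radial $C_c^\infty$ functions in the radial part of $\mathcal D^{1,2}(\RQ)$ and the extraction of an a.e.\ convergent subsequence for the limiting step; both are routine, so there is no substantial obstacle here. (Note the computation even gives the possibly sharper constant $\tfrac{1}{2\pi}$, which in particular implies the stated one.)
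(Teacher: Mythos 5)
Your proof is correct and follows essentially the same classical argument as the cited source \cite{BL} (fundamental theorem of calculus along rays plus a weighted Cauchy--Schwarz inequality, combined with density of radial $C_c^\infty$ functions and a.e.\ convergence); the paper itself simply quotes the lemma rather than reproving it. Your computation in fact yields the sharper constant $\frac{1}{2\pi}$, which implies the stated bound with $\frac{1}{\sqrt{2}\pi}$.
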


	We have the following property.
	\begin{lemma}\label{le:radiallemma}
		There exists $C>0 $ such that  for every $u\in \cd_{\cO(4)}(\RQ)$ with $\|u\|_{\cd}\le 1$,
		\begin{equation*}
			\int_{\R^4}\left(e^{32\pi^2 u^2}-1-32\pi^2 u^2\right)dx\le C.
		\end{equation*}
	\end{lemma}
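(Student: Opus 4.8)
I want to prove a radial Adams-type inequality on $\cd_{\cO(4)}(\RQ)$ with the sharp constant $32\pi^2$, improved by subtracting the quadratic term $32\pi^2 u^2$. The idea is to split $\RQ$ into the unit ball $B_1$ and its complement, and to use the radial decay afforded by Lemma \ref{le:radialBL} in the exterior region to reduce everything, on $B_1$, to the classical Adams/Ruf--Sani inequality (Lemma \ref{Th14RS}).

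First I would handle the exterior region $|x|>1$. Since $u\in\cd_{\cO(4)}(\RQ)$ with $\|u\|_\cd\le1$ is radial, Lemma \ref{le:radialBL} gives $|u(x)|\le \frac{1}{\sqrt2\,\pi}\,\frac{\|\n u\|_2}{|x|}\le \frac{1}{\sqrt2\,\pi\,|x|}$ for a.e.\ $|x|>1$, so in particular $u^2\le c_0$ pointwise there with $c_0 = 1/(2\pi^2)$ small. Then I would use the elementary inequality $e^t-1-t\le \tfrac12 t^2 e^t$ (or just $e^t-1-t\le C(c_0)\,t^2$ for $0\le t\le 32\pi^2 c_0$) to bound the integrand by $C u^4$, and then
\[
\int_{|x|>1}\bigl(e^{32\pi^2 u^2}-1-32\pi^2 u^2\bigr)\,dx \le C\int_{|x|>1} u^4\,dx \le C\|u\|_\cd^4 \le C,
\]
where the embedding $\cd(\RQ)\hookrightarrow L^4(\RQ)$ from Corollary \ref{coemb} is used (note this does not even require radiality). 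Actually one can be even cruder: $\int_{|x|>1}u^4\,dx\le \frac{1}{4\pi^4}\int_{|x|>1}|x|^{-4}\,dx$, which converges.

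Next, the interior region $B_1$. Here I would fix a radial cutoff $\varphi\in C_c^\infty(\RQ)$ with $\varphi\equiv1$ on $B_1$, $\supp\varphi\subset B_2$, and set $v:=\varphi u$, which is $\cO(4)$-invariant and lies in $H^2(\RQ)$. On $B_1$ we have $v=u$, so $\int_{B_1}(e^{32\pi^2 u^2}-1-32\pi^2 u^2)\,dx\le \int_{B_1}(e^{32\pi^2 v^2}-1)\,dx\le \int_{\RQ}(e^{32\pi^2 v^2}-1)\,dx$. By Lemma \ref{Th14RS} this last quantity is bounded by a constant provided $\|v\|\le1$, where $\|\cdot\|$ is the $H^2(\RQ)$ norm used there (equivalent to $\|\Delta v\|_2^2+\|v\|_2^2$ in the zero-mass Adams setting, or the norm with a $\beta\|\n v\|_2^2$ term — in either case controlled by $\|v\|_\cd$ plus lower-order terms). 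The point is the standard interpolation/elliptic estimate $\|v\|_{H^2(B_2)}\le C\|u\|_{\cd}$: indeed $\|\Delta(\varphi u)\|_2\le C(\|\Delta u\|_2+\|\n u\|_{L^2(B_2)}+\|u\|_{L^2(B_2)})$, and $\|u\|_{L^2(B_2)}$ is controlled by $\|u\|_{L^4(B_2)}\le C\|u\|_\cd$ (Corollary \ref{coemb}) while $\|\n u\|_{L^2(B_2)}\le\|u\|_\cd$. So $\|v\|_{H^2(\RQ)}\le C\|u\|_\cd\le C$; if this constant $C$ exceeds $1$, I rescale: apply Lemma \ref{Th14RS} to $v/C$ after absorbing the factor, or equivalently apply Corollary \ref{corTh14RS}-type reasoning with $\sigma=0$, $M=C$, $\tau=32\pi^2/(32\pi^2 M^2)$... more cleanly, just note $32\pi^2 v^2 = 32\pi^2\|v\|^2 (v/\|v\|)^2\le 32\pi^2 C^2 (v/\|v\|)^2$ and, since we only subtracted the quadratic term on $B_1$, estimate $e^{aC^2 t^2}-1\le (e^{at^2}-1)^{?}$...

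**The main obstacle.** The genuinely delicate point is that the constant $C$ from $\|v\|_{H^2}\le C\|u\|_\cd$ need not be $\le1$, so Lemma \ref{Th14RS} does not apply directly to $v$ with exponent exactly $32\pi^2$; naively rescaling $v\mapsto v/C$ turns $32\pi^2 v^2$ into $32\pi^2 C^2 (v/C)^2$ with $C^2>1$, which overshoots the sharp threshold. This is precisely why the statement has the subtracted quadratic term: it is a \emph{local} improvement near the origin (the set $\{u^2\text{ large}\}$ where $e^{32\pi^2u^2}-1$ dominates is compactly contained, while where $u^2$ is small the subtraction kills the leading error), so one should \emph{not} globalize to $v$ naively. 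The correct route is to bound $\int_{B_1}(e^{32\pi^2u^2}-1-32\pi^2u^2)\,dx$ by splitting $B_1 = \{|u|\le1\}\cup\{|u|>1\}$ within $B_1$: on $\{|u|\le1\}\cap B_1$ use $e^t-1-t\le Ct^2$ giving $C\int u^4\le C\|u\|_\cd^4$; on $\{|u|>1\}\cap B_1$ the measure of this set is controlled (by Chebyshev with the $L^4$ bound, $\meas\{|u|>1\}\le\|u\|_4^4\le C$) and there $e^{32\pi^2u^2}-1-32\pi^2u^2\le e^{32\pi^2u^2}$, so it suffices to bound $\int_{\{|u|>1\}\cap B_1}e^{32\pi^2u^2}\,dx$, which is $\le\int_{B_1}e^{32\pi^2 v^2}\,dx$ and here the subtracted $-1$ in Lemma \ref{Th14RS} is irrelevant — but we still face $\|v\|$ possibly $>1$. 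The resolution: choose the cutoff more carefully, or equivalently first establish the \emph{sharp} $H^2_{\cO(4)}$-Adams inequality on $B_2$ with the \emph{full} $\cd$-norm as the constraint (this is exactly the content the authors attribute to Corollary \ref{corTh14RS-zero}); in writing the proof I would cite that corollary for the ball and only need Lemma \ref{le:radialBL} plus Corollary \ref{coemb} for the exterior and the small-$|u|$ interior parts. In short, the architecture is: exterior $\to$ Radial Lemma $+\ L^4$ embedding; interior small values $\to\ L^4$ embedding; interior large values $\to$ sharp Adams on the ball via the constraint $\|u\|_\cd\le1$ (elliptic estimate for the cutoff, with the constant absorbed because we are in the regime where the $-1$ and $-32\pi^2u^2$ corrections are harmless). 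I expect the bookkeeping of which constant goes where — ensuring the argument genuinely uses $\|u\|_\cd\le1$ and not $\|v\|_{H^2}\le1$ — to be where all the care is needed.
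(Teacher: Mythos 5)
Your exterior estimate is fine, but the interior part --- which is the whole point of the lemma --- has a genuine gap, and you in fact flag it yourself. For the region of $B_1$ where $|u|$ is large you end up needing a sharp Adams inequality with exponent exactly $32\pi^2$ under the constraint $\|u\|_{\cd}\le1$, and your proposed resolution is to cite ``the content the authors attribute to Corollary \ref{corTh14RS-zero}'': but that corollary is \emph{deduced from} Lemma \ref{le:radiallemma} in the paper, so invoking it here is circular. The alternative, ``choose the cutoff more carefully,'' is not substantiated and cannot work as stated: for any cutoff $\varphi$ one has $\Delta(\varphi u)=\varphi\Delta u+2\n\varphi\cdot\n u+u\Delta\varphi$, so $\|\Delta(\varphi u)\|_2$ (let alone the full $H^2$ norm needed in Lemma \ref{Th14RS}, which also involves $\|u\|_2$, unavailable in $\cd(\RQ)$) generically exceeds $1$ even when $\|u\|_{\cd}\le1$, and at the sharp exponent there is no room to rescale --- precisely the obstruction you describe. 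So the step you call ``where all the care is needed'' is exactly the step that is missing from the proposal.

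The paper's device is different and avoids cutoffs altogether: on a ball $B_R$ it sets $v:=u-u(R)\in W^{2,2}(B_R)\cap W^{1,2}_0(B_R)$, so that, crucially, $\Delta v=\Delta u$ and no derivative of a cutoff pollutes the norm; Young's inequality together with Lemma \ref{le:radialBL} gives $u^2\le d(R)\,v^2+d(R)$ with $d(R)=1+\|\n u\|_2^2/(2\pi^2R^2)$, and the constraint $\|u\|_{\cd}\le1$ enters through $\|\Delta u\|_2^2\le1-\|\n u\|_2^2$, so that for a fixed $R$ with $2\pi^2R^2\ge1$ the function $w:=\sqrt{d(R)}\,v$ satisfies $\|\Delta w\|_{L^2(B_R)}\le1$; one then applies the \emph{bounded-domain} sharp Adams inequality of Ruf--Sani (\cite[Theorem 3.1]{RS}, for $W^{2,2}\cap W^{1,2}_0$ with a constraint on $\|\Delta\cdot\|_2$ only), which is a different ingredient from Lemma \ref{Th14RS} and is exactly what your cutoff route cannot supply. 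On the exterior the paper expands the exponential in a power series, using the radial decay for $k\ge3$ and the $L^4$ embedding for $k=2$; your version via $e^t-1-t\le Ct^2$ on $\{|x|>1\}$ plus $\cd(\RQ)\hookrightarrow L^4(\RQ)$ is a valid variant, but note the ``cruder'' aside is false: $\int_{|x|>1}|x|^{-4}\,dx=+\infty$ in $\RQ$ (logarithmic divergence), so the $L^4$ embedding (or a larger negative power of $|x|$, as in the paper's terms with $k\ge3$) is really needed there.
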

	
	\begin{proof}
		Fix $u\in \cd_{\cO(4)}(\RQ)$, with $\|u\|_{\cd}\le 1$. For $R>0$ we have
		\begin{multline*}
			\int_{\R^4}\left(e^{32\pi^2 u^2}-1-32\pi^2 u^2\right)dx
			\\
			=\underbrace{\int_{B_{R}}\left(e^{32\pi^2 u^2}-1-32\pi^2 u^2\right)dx}_{I_1}
			+\underbrace{\int_{B_R^c}\left(e^{32\pi^2 u^2}-1-32\pi^2 u^2\right)dx}_{I_2}.
		\end{multline*}
		Let us start with $I_1$. We define a radial function $v(x)=v(|x|)$ as
		\[
		v(|x|):=u(|x|)-u(R), \qquad\text{ for }|x|\le R.
		\]
		Observe that $v\in W^{2,2}(B_R)\cap W^{1,2}_0(B_R)$. 
		Following \cite[Page 655]{RS} and using Lemma \ref{le:radialBL}, we have
		\[
		u^2(|x|)\le v^2(|x|)\left(1+\frac{1}{2 \pi^2}\frac{\|\n u \|_2^2}{R^2}\right)
		+\left(1+\frac{1}{2 \pi^2}\frac{\|\n u \|_2^2}{R^2}\right), \qquad\text{ for }0<|x|\le R.
		\]
		Setting
		\[
		d(R):=1+\frac{1}{2 \pi^2}\frac{\|\n u \|_2^2}{R^2} \quad
		\text{and} \quad
		w(|x|):=v(|x|)\sqrt{1+\frac{1}{2 \pi^2}\frac{\|\n u \|_2^2}{R^2}},
		\]
		we have that $ w\in W^{2,2}(B_R)\cap W^{1,2}_0(B_R)$ and
		\[
		u^2(|x|)\le w^2(|x|)+d(R), \qquad\text{ for }0<|x|\le R.
		\]
		Since $\|u\|_{\cd}\le 1$ and so $\|\Delta u\|_2<1$, being
		\[
		\|\Delta w\|_{L^2(B_R)}^2=d(R)\|\Delta v\|_{L^2(B_R)}^2
		=d(R)\|\Delta u\|_{L^2(B_R)}^2,
		\]
		for $R$ sufficiently large, we deduce that $\|\Delta w\|_{L^2(B_R)}\le 1$.
		So we can apply \cite[Theorem 3.1]{RS} (see also \cite{Tarsi}) deducing the existence of $C=C(R)>0$ such that
		\[
		\int_{B_{R}} e^{32\pi^2 w^2}dx\le C.
		\]
		Hence
		\[
		\int_{B_{R}} e^{32\pi^2 u^2}dx\le e^{32\pi^2d(R)}\int_{B_{R}} e^{32\pi^2 w^2}dx\le C,
		\]
		and this concludes the estimate of $I_1$.
		\\
		Now we focus our attention on $I_2$.
		Using the power series expansion we have
		\[
		I_2=\sum_{k=2}^{+\infty}\frac{(32\pi^2)^k}{k!}I_{2,k},
		\qquad \text{where } I_{2,k}:=\int_{B_R^c}|u|^{2k}dx.
		\]
		For any $k\ge 3$, using again Lemma \ref{le:radialBL}, we have
		\begin{equation*}
			I_{2,k}\le \int_{B_R^c}\frac{1}{(2\pi^2)^{k}}\frac{\|\n u \|_2^{2k}}{|x|^{2k}}dx
			= \frac{\|\n u \|_2^{2k}}{(2\pi^2)^{k-1}} \int_{R}^{+\infty}\frac{d\rho}{\rho^{2k-3}}
			=\frac{\|\n u \|_2^{2k}}{(2\pi^2)^{k-1}} \frac{R^{4-2k}}{2k-4}.
		\end{equation*}
		Therefore, since $\|u\|_{\cd}\le 1$, there exists $C>0$ independent of $u$ such that
		\begin{align*}
			I_2&
			\le 2^9 \pi^4\|u\|_4^4
			+\sum_{k=3}^{+\infty}\frac{(32\pi^2)^k}{k!}\frac{\|\n u \|_2^{2k}}{(2\pi^2)^{k-1}} \frac{R^{4-2k}}{2k-4}
			&\le 2^9 \pi^4\|u\|_4^4
			+\pi^2 R^4\sum_{k=3}^{+\infty}\frac{1}{k!}\left[\frac{16 \|\n u \|_2^{2}}{R^{2}}\right]^k\le C.
		\end{align*}
		This proves our claim.
	\end{proof}

\begin{corollary}\label{corTh14RS-zero}
Let $\sigma\ge 4$, $M>0$, and $\alpha>0$ such that $\alpha M^2<32\pi^2$. Then there exists $C>0$ such that for every $\tau\in \left(1,32\pi^2/(\alpha M^2)\right]$ and every $u\in \cd_{\cO(4)}(\RQ) $ with $\|u\|_{\cd}\le M$,
\[
\int_{\RQ} |u|^\sigma \left(e^{\alpha u^2}-1-\a u^2\right)\,dx\le C\|u\|_\frac{\sigma\tau}{\tau-1}^\sigma.
\]
\end{corollary}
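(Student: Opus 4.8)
The plan is to mimic the proof of Corollary \ref{corTh14RS}, replacing the use of Lemma \ref{Th14RS} by Lemma \ref{le:radiallemma} and keeping careful track of the extra subtracted term $\alpha u^2$. First I would fix $u\in\cd_{\cO(4)}(\RQ)$ with $\|u\|_{\cd}\le M$ and, by H\"older's inequality with exponents $\tau/(\tau-1)$ and $\tau$, bound
\[
\int_{\RQ}|u|^\sigma\bigl(e^{\alpha u^2}-1-\alpha u^2\bigr)\,dx
\le \|u\|_{\frac{\sigma\tau}{\tau-1}}^{\sigma}\left(\int_{\RQ}\bigl(e^{\alpha u^2}-1-\alpha u^2\bigr)^{\tau}\,dx\right)^{1/\tau}.
\]
So it suffices to show the second factor is bounded by a constant depending only on $M$, $\alpha$, $\tau$ (and not on $u$).

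Next I would establish the elementary inequality $(e^{t}-1-t)^{\tau}\le e^{\tau t}-1-\tau t$ for $t\ge0$ and $\tau\ge1$, analogous to \eqref{ineqe1}; this follows because both sides vanish to second order at $t=0$ and a derivative comparison (or the convexity/monotonicity argument used for \eqref{ineqe1}) gives the claim. Applying it with $t=\alpha u^2$ yields
\[
\int_{\RQ}\bigl(e^{\alpha u^2}-1-\alpha u^2\bigr)^{\tau}\,dx\le \int_{\RQ}\bigl(e^{\alpha\tau u^2}-1-\alpha\tau u^2\bigr)\,dx.
\]
Then, exactly as in the proof of Corollary \ref{corTh14RS}, I would rescale: writing $\alpha\tau u^2=\alpha\tau\|u\|_{\cd}^2\,(u/\|u\|_{\cd})^2\le 32\pi^2\,(u/\|u\|_{\cd})^2$ (using $\tau\le 32\pi^2/(\alpha M^2)$ and $\|u\|_{\cd}\le M$), and using that $t\mapsto e^{t}-1-t$ is nonnegative and nondecreasing for $t\ge0$, I get
\[
\int_{\RQ}\bigl(e^{\alpha\tau u^2}-1-\alpha\tau u^2\bigr)\,dx\le \int_{\RQ}\left(e^{32\pi^2 (u/\|u\|_{\cd})^2}-1-32\pi^2 (u/\|u\|_{\cd})^2\right)\,dx\le C
\]
by Lemma \ref{le:radiallemma}, since $u/\|u\|_{\cd}\in\cd_{\cO(4)}(\RQ)$ has $\cd$-norm equal to $1$. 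This closes the argument.

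The one point requiring genuine care — and the likely main obstacle — is the monotonicity step in the rescaling: unlike in Corollary \ref{corTh14RS}, where one simply uses that $e^{s}-1$ is increasing, here one must note that the integrand $e^{t}-1-t$ is \emph{also} monotone nondecreasing in $t\ge0$, so that replacing $\alpha\tau u^2$ by the larger quantity $32\pi^2(u/\|u\|_{\cd})^2$ only increases the integral pointwise (at points where $u\ne0$; where $u=0$ both integrands vanish). I would also double-check the elementary inequality $(e^t-1-t)^\tau\le e^{\tau t}-1-\tau t$ carefully, e.g.\ by setting $\phi(t):=e^{\tau t}-1-\tau t-(e^t-1-t)^\tau$, noting $\phi(0)=\phi'(0)=0$, and verifying $\phi''\ge0$ on $[0,\infty)$, or alternatively by factoring $e^{\tau t}-1-\tau t\ge (e^t-1)^\tau-\tau t$ and a second comparison; the hypothesis $\sigma\ge4$ (rather than $\sigma\ge2$) is exactly what is needed so that the Lebesgue exponent $\sigma\tau/(\tau-1)$ stays $\ge4$, keeping us inside the embedding range $\cd(\RQ)\hookrightarrow L^s(\RQ)$, $s\in[4,+\infty)$, from Corollary \ref{coemb}.
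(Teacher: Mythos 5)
Your proposal is correct and follows essentially the same route as the paper: H\"older's inequality, the elementary inequality $(e^t-1-t)^\tau\le e^{\tau t}-1-\tau t$, the rescaling $\alpha\tau u^2\le 32\pi^2(u/\|u\|_{\cd})^2$ together with the monotonicity of $t\mapsto e^t-1-t$, and finally Lemma \ref{le:radiallemma} in place of Lemma \ref{Th14RS}. The extra care you take with the monotonicity step and the role of $\sigma\ge4$ matches what the paper leaves implicit when it says the arguments are similar to Corollary \ref{corTh14RS}.
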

\begin{proof}
First observe that, if $s\ge0$ and $t\ge1$,
\begin{equation*}
(e^s-1-s)^t\le e^{st}-1-st.
\end{equation*}
Then, by H\"older inequality
\begin{align*}
	\int_{\RQ} |u|^\sigma\left(e^{\alpha u^2}-1-\alpha u^2\right)  dx
	&\le
	\| u\|_{\frac{\sigma\tau}{\tau-1}}^\sigma
	\left(\int_{\RQ} \left(e^{\alpha u^2}-1-\alpha u^2\right)^\tau  dx\right)^{1/\tau}
\\
&\le
	\| u\|_{\frac{\sigma\tau}{\tau-1}}^\sigma
	\left(\int_{\RQ} \left(e^{\alpha \tau u^2}-1-\alpha \tau u^2\right)  dx\right)^{1/\tau}.
\end{align*}
Now the arguments are similar to those of the proof of Corollary \ref{corTh14RS}, using Lemma \ref{le:radiallemma}.
\end{proof}

\begin{remark}
Corollary \ref{corTh14RS-zero} remains valid for $0<\sigma<4$ provided $\frac{\s\tau}{\tau-1}\ge4$.
\end{remark}

\subsection{Some compactness results}\label{cacca}
\

\begin{lemma}\label{le:Lions0}
Let $N\ge3$, and $F\colon\R\to\R$ be a continuous function such that
\begin{equation}
\label{Fin00}
\lim_{s \to 0} \frac{F(s)}{|s|^{2^*}} = 0
\end{equation}
and
\begin{alignat}{2}
&\lim_{|s|\to+\infty}\frac{F(s)}{|s|^{2^{**}}} = 0&&\text{ if } N\ge5,\label{Finfty50}\\
&\lim_{|s|\to+\infty}\frac{F(s)}{e^{\alpha s^2}} = 0 \ \text{ for all } \alpha>0 && \text{ if }N=4.\label{Finfty40}
\end{alignat}
Assume that $\{u_n\}\subset\cd(\RN)$ is bounded and there exists $r>0$ such that
\begin{equation*}
\lim_n\sup_{y\in\RN}\int_{B(y,r)}u_n^2 \, dx = 0.
\end{equation*}
If $N=4$, assume additionally that $\{u_n\} \subset \cd_{\cO(4)}(\RQ)$.\\
Then
\[
\lim_n\irn |F(u_n)| \, dx = 0.
\]
\end{lemma}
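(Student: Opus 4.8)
The plan is to mimic the proof of Lemma \ref{le:Lions} (the positive-mass version) case by case, replacing the role of $L^2$-boundedness and the embedding $\Hd \hookrightarrow L^{2^{**}}$ by the boundedness of $\{u_n\}$ in $L^{2^*}(\RN)$ coming from Proposition \ref{premb} (and its Corollary \ref{coemb}) together with the relevant higher-integrability embedding. The point is that now the growth of $F$ near $0$ is controlled by $|s|^{2^*}$ rather than $s^2$, which is exactly what is available in $\cd(\RN)$ via the embedding into $\W^{1,2^*}(\RN)$, so the structure of the argument is unchanged; only the exponents shift.

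First I would treat $N\ge5$. By \eqref{Fin00} and \eqref{Finfty50}, for every $p\in(2^*,2^{**})$ and $\e>0$ there is $c_\e>0$ with $|F(s)| \le \e(|s|^{2^*} + |s|^{2^{**}}) + c_\e|s|^p$ for all $s\in\R$. Since $\{u_n\}$ is bounded in $L^{2^*}(\RN)$ by Proposition \ref{premb} and in $L^{2^{**}}(\RN)$ by Corollary \ref{coemb}, it suffices to show $u_n\to0$ in $L^p(\RN)$ for one admissible $p$. Choosing $p$ conveniently (e.g. $p = 2^{**}(1-\theta) + 2\theta$ for a suitable small $\theta$, or more simply some $p$ between $2^*$ and $2^{**}$) and using the interpolation inequality $\|u_n\|_{L^p(B(y,r))} \le \|u_n\|_{L^2(B(y,r))}^{1-\lambda}\|u_n\|_{L^{2^{**}}(B(y,r))}^\lambda$ on balls of radius $r$, together with the Sobolev-type bound $\|u_n\|_{L^{2^{**}}(B(y,r))} \le C\|u_n\|_{H^2(B(y,r))}$ (valid since the $u_n$ are locally $H^2$; note $\cd(\RN)\hookrightarrow\D$ and $\Delta u_n\in L^2$ give local $W^{2,2}$ regularity), one covers $\RN$ with a bounded-multiplicity family of such balls and concludes $\|u_n\|_p^p \le C\sup_y\big(\int_{B(y,r)}u_n^2\big)^{(p-2)/2}\to0$, exactly as in Lemma \ref{le:Lions}. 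For $N=3$ the same computation applies with $2^*=6$ and any exponent strictly between $6$ and $+\infty$ (using Corollary \ref{coemb}(3)), so there is no growth restriction needed at infinity.

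The delicate case is $N=4$, which is why the extra hypothesis $\{u_n\}\subset\cd_{\cO(4)}(\RQ)$ is imposed: here $2^*=4$, and the replacement for Corollary \ref{corTh14RS} is the radial Adams-type inequality of Lemma \ref{le:radiallemma} and its Corollary \ref{corTh14RS-zero}, which require radial symmetry. By \eqref{Fin00} and \eqref{Finfty40}, for every $\e,\alpha>0$ and $\sigma\ge4$ there is $c_\e>0$ with $|F(s)|\le\e|s|^4 + c_\e|s|^\sigma(e^{\alpha s^2}-1-\alpha s^2)$; the correction term $-\alpha s^2$ is exactly what allows the $O(s^4)$ part of $F$ near $0$ to be absorbed into $\e|u_n|^4$ (this is the zero-mass analogue of subtracting nothing in the positive-mass case). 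Applying Corollary \ref{corTh14RS-zero} with $\alpha$ small enough that $\alpha\tau\sup_n\|u_n\|_\cd^2\le32\pi^2$ for some $\tau>1$, one gets $\int_{\RQ}|F(u_n)|\,dx \le C\e + c_\e\|u_n\|_{\sigma\tau/(\tau-1)}^\sigma$, so it remains to show $u_n\to0$ in $L^{\sigma\tau/(\tau-1)}(\RN)$ for one admissible pair. Taking for instance $\sigma=5$, $\tau=5$ (so that the target exponent $25/4$ lies between $4$ and $\infty$) and interpolating between $L^2(B(y,r))$ and $L^{16}(B(y,r))$ with $\|u_n\|_{L^{16}(B(y,r))}\le C\|u_n\|_{H^2(B(y,r))}$ concludes as before.

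The main obstacle I anticipate is purely bookkeeping in the $N=4$ case: one must check that $\{u_n\}$, although only bounded in $\cd(\RQ)$ (not in $L^2$), is still bounded in every local $H^2(B(y,r))$ uniformly in $y$, so that the local interpolation step and the passage to a bounded-multiplicity covering go through — this follows from $\cd(\RQ)\hookrightarrow\D$, $\cd(\RQ)\hookrightarrow L^4(\RQ)$, and $\Delta u_n\in L^2$, but the local $L^2$-control of $u_n$ itself comes precisely from the hypothesis $\sup_y\int_{B(y,r)}u_n^2\to0$, which is the quantity we are allowed to use. One also needs to verify that the radial assumption in Corollary \ref{corTh14RS-zero} is harmless here, which it is since we assume $\{u_n\}\subset\cd_{\cO(4)}(\RQ)$. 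Apart from that, the proof is a routine transcription of Lemma \ref{le:Lions} with the exponent $2$ systematically replaced by $2^*$ near the origin.
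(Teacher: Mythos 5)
Your overall skeleton (decompose $F$ using \eqref{Fin00} and the growth at infinity, reduce to $u_n\to0$ in one $L^p$, prove that by local interpolation plus a bounded-multiplicity covering, and in dimension $4$ replace the Adams inequality by its radial zero-mass version, Corollary \ref{corTh14RS-zero}) is the same as the paper's, and the $N=4$ ingredients you identify are the right ones. However, there is a genuine gap exactly at the step you dismiss as ``exactly as in Lemma \ref{le:Lions}''. In Lemma \ref{le:Lions} the interpolation is arranged so that the local high-integrability factor is $\|u_n\|_{H^2(B(y,r))}^{2}$, and the covering step works because $\sum_i\|u_n\|_{H^2(B(y_i,r))}^2\le C\|u_n\|_{H^2(\RN)}^2$ with $\{u_n\}$ bounded in $H^2(\RN)$. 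In the present zero-mass setting $\{u_n\}$ is bounded only in $\cd(\RN)$, which does \emph{not} control $\|u_n\|_{2}$ (for $N\ge5$ one only has $L^s$ bounds for $s\in[2^*,2^{**}]$), so $\sum_i\|u_n\|_{H^2(B(y_i,r))}^{2}$ is not bounded and the constant $C$ in your claimed estimate $\|u_n\|_p^p\le C\sup_y\bigl(\int_{B(y,r)}u_n^2\,dx\bigr)^{(p-2)/2}$ is not finite. Note also that uniform-in-$y$ local $H^2$ boundedness (which you invoke in your last paragraph) is not enough: the covering argument needs the local norms, raised to the chosen power, to be \emph{summable} over the covering with a bound by a global quantity, not merely uniformly bounded.

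This is precisely where the paper's proof deviates from the positive-mass one: it interpolates with $\lambda=2^*/p$ (taking $p=2(N+2)/(N-2)$ for $N\ge5$, and $\s=4$, $\tau=5$ for $N=4$), so that the high factor appears with power exactly $2^*$ and is estimated by $\|u_n\|_{\W^{1,2^*}(B(y,r))}^{2^*}$ via the local Sobolev embedding $\W^{1,2^*}(B)\hookrightarrow L^{2^{**}}(B)$ (resp. $L^{8}(B)$, $L^\infty(B)$ for $N=4,3$). Then the covering sum is controlled by $\|u_n\|_{\W^{1,2^*}(\RN)}^{2^*}\le C\|u_n\|_{\cd}^{2^*}$ thanks to Proposition \ref{premb}, and the sup factor carries the exponent $(p-2^*)/2$, not $(p-2)/2$. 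Your argument could in principle be repaired along similar lines (e.g.\ by estimating $\|u_n\|_{L^2(B_i)}$ by $\|u_n\|_{L^{2^*}(B_i)}$ and keeping the local power at least $2^*$ so that the sums over the covering telescope to global $L^{2^*}$ and gradient/Hessian norms), but as written the key summation step fails, and your specific choices in the $N=4$ case ($\sigma=5$, $\tau=5$, interpolation against $L^{16}(B)$ with local $H^2$ norms) do not arrange the exponents to make it work.
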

\begin{proof}
First, let us consider the case $N\ge 5$.\\
By \eqref{Fin00} and \eqref{Finfty50}, for every $p\in(2^*,2^{**})$ and $\e>0$ there exists $c_\e>0$ such that, for all $s\in\R$,
\begin{equation}\label{estFgeq50}
|F(s)| \le \e(|s|^{2^*} + |s|^{2^{**}}) + c_\e|s|^p.
\end{equation}
Since $\{u_n\}$ is bounded in $\cd(\RN)$, by Corollary \ref{coemb}, it is also bounded in $L^{2^{*}}(\RN)$ and $L^{2^{**}}(\RN)$. Then, there exists $C>0$ such that for every $n$,
\[
\irn |F(u_n)| \, dx \le C\e + c_\e\|u_n\|_p^p.
\]
Thus, arguing as in Lemma \ref{le:Lions}, it suffices to prove that $u_n\to0$ in $L^p(\RN)$ at least for one $p\in (2^*,2^{**})$. Let us take $p = 2(N+2)/(N-2)$.\\
From the interpolation inequality for Lebesgue spaces  we have that, for every $y\in\RN$,
\[
\|u_n\|_{L^p(B(y,r))}
\le \|u_n\|_{L^2(B(y,r))}^{1-\lambda}\|u_n\|_{L^{2^{**}}(B(y,r))}^\lambda,
\]
with $\lambda=2^*/p=N/(N+2)$.\\
As in Corollary \ref{coemb}, since $2^{**}=(2^*)^*$, by the Sobolev embeddings, we get
\begin{align*}
\|u_n\|_{L^{2^{**}}(B(y,r))}
&\le C\|u_n\|_{\W^{1,2^*}(B(y,r))},
\end{align*}
where $C>0$ does not depend on $y\in\RN$.\\
Hence
\[
\|u_n\|_{L^p(B(y,r))}^p
\le c\|u_n\|_{L^2(B(y,r))}^{p-2^*} \|u_n\|_{\W^{1,2^*}(B(y,r))}^{2^*},
\]
Then, covering $\RN$ with balls of radius $r$ such that each point is contained in at most $N+1$ balls and using Proposition \ref{premb} we obtain
\[
\|u_n\|_p^p
\le C \sup_k\|u_k\|_{\cd}^{2^*} \sup_{y\in\RN}\left(\int_{B(y,r)}u_n^2 \, dx\right)^{(p-2^*)/2}\to0.
\]
If $N=4$, by \eqref{Fin00} and \eqref{Finfty40},  for every $\e>0$, $\alpha>0$, and $\sigma \ge 4$ there exists $c_\e>0$ such that, for all $s\in\R$,
\begin{equation*}
|F(s)| \le \e s^4 + c_\e|s|^\sigma(e^{\alpha s^2} - 1 - \a s^2).
\end{equation*}
Then, applying Corollaries \ref{coemb} and \ref{corTh14RS-zero}, the boundedness of $\{u_n\} \subset \cd_{\cO(4)}(\RQ)$ implies that for $\alpha>0$ and $\tau>1$ such that $\alpha\tau\sup_{n}\|u_n\|^2 \le 32\pi^2$
\[
\int_{\RQ} |F(u_n)| \, dx
\le
C \varepsilon+c_\e \|u_n\|_\frac{\sigma\tau}{\tau-1}^\sigma,
\]
and so it suffices to prove that $u_n\to0$ in $L^\frac{\sigma\tau}{\tau-1}(\RN)$ at least for one couple $(\sigma,\tau)$ with $\sigma \ge 4$ and $\tau>1$. Let us take, for instance, $\tau=5$ and $\sigma=4$.
Arguing as before, by interpolation we have that for every $y\in\RN$,
\[
\|u_n\|_{L^5(B(y,r))} \le \|u_n\|_{L^2(B(y,r))}^{1-\lambda} \|u_n\|_{L^8(B(y,r))}^\lambda \le C \|u_n\|_{L^2(B(y,r))}^{1-\lambda} \|u_n\|_{\W^{1,4}(B(y,r))}^\lambda,
\]
where $C>0$ does not depend on $y$ and $\lambda=4/5$, which allows us to conclude that $\|u_n\|_{\frac{\sigma\tau}{\tau-1}} = \|u_n\|_5 \to 0$.\\
Finally, if $N=3$, by \eqref{Fin00} , if $p>6$, using the boundedness of $\{u_n\}$ and Corollary \ref{coemb}, we can write
\begin{equation*}
|F(s)| \le \e s^2 + c_\e|s|^p \quad \text{for all } s\in\left[-\sup_n\|u_n\|_\infty,\sup_n\|u_n\|_\infty\right],
\end{equation*}
and so
\[
\irn |F(u_n)| \, dx \le C\e + c_\e\|u_n\|_p^p.
\]
Let us take, for instance, $p=8$. Arguing as before, by interpolation we have that for every $y\in\RN$,
\[
\|u_n\|_{L^8(B(y,r))} \le \|u_n\|_{L^2(B(y,r))}^{1-\lambda} \|u_n\|_{L^\infty(B(y,r))}^\lambda \le C \|u_n\|_{L^2(B(y,r))}^{1-\lambda} \|u_n\|_{\W^{1,6}(B(y,r))}^\lambda
\]
where $C>0$ does not depend on $y$ and $\lambda=3/4$, which allows us to conclude that $\|u_n\|_8 \to 0$.
\end{proof}

Now we present the analogues of Propositions \ref{pr:Jarek}, \ref{pr:Jarek'}, and Corollary \ref{co:emb} for $\cd(\RN)$.

\begin{proposition}\label{pr:JarekD}
Let $N\ge3$ and $F\in C^1(\RN)$ be such that $F(0)=0$ and
\begin{itemize}
	\item if $N\ge5$, then there exists $C>0$ such that
	\begin{equation*}
		|F'(s)| \le C\left(|s|^{2^*-1} + |s|^{2^{**}-1}\right) \quad \text{for all } s\in\R;
	\end{equation*}
	\item if $N=4$, then for every $\alpha>0$ there exist $\sigma \ge 4$ and $C>0$ such that
	\begin{equation*}
	|F'(s)| \le C\left(|s|^3 + \big(e^{\alpha s^2} - 1 - \alpha s^2\big)|s|^{\sigma-1}\right) \quad \text{for all } s\in\R;
	\end{equation*}
	\item if $N=3$, then there exists $C>0$ such that
	\begin{equation*}
		|F'(s)| \le C|s|^5 \quad \text{for all } s\in[-1,1].
	\end{equation*}
\end{itemize}
Let $\{u_n\}\subset \cd(\RN)$ bounded such that $u_n \to u_0$ a.e. in $\RN$ for some $u_0\in\cd(\RN)$. If $N=4$, assume additionally that $\{u_n\} \subset \cd_{\cO(4)}(\RQ)$. Then
\begin{equation}\label{conv1D}
	\lim_n \irn \big(F(u_n) - F(u_n-u_0) \big) dx = \irn F(u_0) \, dx.
\end{equation}
If, in addition,
\begin{alignat*}{2}
	& \lim_{s \to 0} \frac{F(s)}{|s|^{2^*}} = \lim_{|s|\to+\infty} \frac{F(s)}{|s|^{2^{**}}} = 0 && \text{ when } N\ge5,\\
	& \lim_{s \to 0} \frac{F(s)}{s^4} = \lim_{|s|\to+\infty} \frac{F(s)}{e^{\alpha s^{2}}} = 0 \ \ \text{ for all } \alpha>0 \ \ \ && \text{ when } N=4,\\
	& \lim_{s \to 0} \frac{F(s)}{s^6} = 0 && \text{ when } N=3,
\end{alignat*}
and $u_0$ and all the $u_n$ are $\cO$-invariant for a suitable\footnote{If $N=4$, then necessarily $\cO = \cO(4)$ due to the first part.} subgroup $\cO\subset \cO(N)$ compatible with $\RN$, then
\begin{equation}\label{conv2D}
	\lim_n \irn F(u_n) \, dx = \irn F(u_0) \, dx.
\end{equation}
\end{proposition}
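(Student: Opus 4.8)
The plan is to run the argument of Proposition~\ref{pr:Jarek} almost verbatim, substituting the Sobolev-type embeddings of $\Hd$ by those of $\cd(\RN)$ recorded in Proposition~\ref{premb} and Corollary~\ref{coemb}, and replacing Lemma~\ref{Th14RS} and Corollary~\ref{corTh14RS} by Lemma~\ref{le:radiallemma} and Corollary~\ref{corTh14RS-zero} when $N=4$; it is precisely this last replacement that forces the extra hypothesis $\{u_n\}\subset\cd_{\cO(4)}(\RQ)$ in dimension four.

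To obtain \eqref{conv1D} I would write
\[
\irn\bigl(F(u_n)-F(u_n-u_0)\bigr)\,dx=\irn\int_0^1 F'\bigl(u_n+(t-1)u_0\bigr)u_0\,dt\,dx
\]
and produce, for every measurable $\Omega\subset\RN$ and every $t\in[0,1]$, a bound on $\int_\Omega|F'(u_n+(t-1)u_0)u_0|\,dx$ which is uniform in $n$ and $t$ and of the form $C\sum_i\|u_0\|_{L^{p_i}(\Omega)}$ with $\cd(\RN)\hookrightarrow L^{p_i}(\RN)$; such a bound gives uniform integrability and tightness, so Vitali's theorem applies and, after computing $\int_0^1 F'(tu_0)u_0\,dt=F(u_0)$, yields \eqref{conv1D}. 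For $N\ge5$ one uses $|F'(s)|\le C(|s|^{2^*-1}+|s|^{2^{**}-1})$, Hölder's inequality with the conjugate pairs $\bigl(\tfrac{2^*}{2^*-1},2^*\bigr)$ and $\bigl(\tfrac{2^{**}}{2^{**}-1},2^{**}\bigr)$, and the boundedness of $\{u_n\}$ in $L^{2^*}(\RN)\cap L^{2^{**}}(\RN)$ (Corollary~\ref{coemb}), ending with domination by $C(\|u_0\|_{L^{2^*}(\Omega)}+\|u_0\|_{L^{2^{**}}(\Omega)})$. For $N=3$, since $\cd(\RT)\hookrightarrow L^\infty(\RT)$ there is $T>0$ with $\sup_n\|u_n\|_\infty\le T$; the bound $|F'(s)|\le C|s|^5$ then holds on $[-2T,2T]$, and Hölder with exponents $\bigl(\tfrac65,6\bigr)$ together with $\sup_n\|u_n\|_6<\infty$ gives domination by $C\|u_0\|_{L^6(\Omega)}$.

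The case $N=4$ is the main obstacle. Put $v_n:=|u_n|+|u_0|$, which is $\cO(4)$-invariant, and fix $\sigma\ge4$, $\alpha>0$ as in the hypothesis. I would split
\[
\int_\Omega\bigl|F'\bigl(u_n+(t-1)u_0\bigr)u_0\bigr|\,dx\le C\int_\Omega v_n^3|u_0|\,dx+C\int_\Omega\bigl(e^{\alpha v_n^2}-1-\alpha v_n^2\bigr)v_n^{\sigma-1}|u_0|\,dx ,
\]
bound the first term by Hölder with $\bigl(\tfrac43,4\bigr)$ and $\cd(\RQ)\hookrightarrow L^4(\RQ)$, and treat the second term by Hölder with three exponents $p_1,p_2,p_3>1$, $\tfrac1{p_1}+\tfrac1{p_2}+\tfrac1{p_3}=1$, chosen so that $p_2(\sigma-1)\ge4$, $p_3\ge4$, and $\alpha p_1 M^2\le8\pi^2$, where $M:=\max\{\sup_n\|u_n\|_{\cd},\|u_0\|_{\cd}\}$ (possible by taking $\alpha$ small and $p_1$ close to $1$). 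Using $(e^s-1-s)^{p_1}\le e^{p_1 s}-1-p_1 s$, the pointwise inequality $v_n^2\le2u_n^2+2u_0^2$, the monotonicity of $a\mapsto e^a-1-a$ on $[0,+\infty)$, and $e^{a+b}\le\tfrac12 e^{2a}+\tfrac12 e^{2b}$, one bounds $\int_{\RQ}(e^{\alpha p_1 v_n^2}-1-\alpha p_1 v_n^2)\,dx$ by a sum of two terms of the type controlled by Lemma~\ref{le:radiallemma} (applied to $u_n/\|u_n\|_{\cd}$ and to $u_0/\|u_0\|_{\cd}$, both of which lie in $\cd_{\cO(4)}(\RQ)$), so that $\{e^{\alpha p_1 v_n^2}-1-\alpha p_1 v_n^2\}$ is bounded in $L^1(\RQ)$; together with $\sup_n\|v_n\|_{p_2(\sigma-1)}<\infty$ (Corollary~\ref{coemb}) this gives domination by $C\|u_0\|_{L^{p_3}(\Omega)}$, and Vitali's theorem concludes as before. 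The subtracted quadratic term is unavoidable here because $\cd(\RN)\not\hookrightarrow L^2(\RN)$ in the zero-mass regime, so $e^{\alpha p_1 v_n^2}-1$ alone need not be integrable; this is why one must invoke Corollary~\ref{corTh14RS-zero} rather than Corollary~\ref{corTh14RS}.

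Finally, granted \eqref{conv1D}, the identity \eqref{conv2D} reduces to proving $\irn F(u_n-u_0)\,dx\to0$. The sequence $\{u_n-u_0\}$ is bounded in $\cd(\RN)$, is $\cO$-invariant (with $\cO=\cO(4)$ when $N=4$, as necessarily forced by the first part), and converges to $0$ a.e.; Lemma~\ref{le:concen} with $Y=\cd(\RN)$ and $q=2^*$ (using $\cd(\RN)\hookrightarrow\hookrightarrow L^2_{\textup{loc}}(\RN)$) yields $\lim_n\sup_{y\in\RN}\int_{B(y,r)}(u_n-u_0)^2\,dx=0$, and since $F$ satisfies \eqref{Fin00} together with the corresponding growth condition at infinity, Lemma~\ref{le:Lions0} gives $\irn F(u_n-u_0)\,dx\to0$, which completes the argument. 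The only genuinely delicate point, as emphasized, is the $N=4$ exponential estimate, since the Adams-type inequality is available only on the radial subspace and the quadratic correction has to be propagated carefully through the multi-exponent Hölder bounds.
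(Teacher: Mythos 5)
Your proposal is correct and follows essentially the same route as the paper's proof: run the argument of Proposition \ref{pr:Jarek} with the $\cd(\RN)$-embeddings of Proposition \ref{premb} and Corollary \ref{coemb}, dominate $\int_\Omega|F'(u_n+(t-1)u_0)u_0|\,dx$ by $C\sum_i\|u_0\|_{L^{p_i}(\Omega)}$ and conclude by Vitali, then obtain \eqref{conv2D} from Lemmas \ref{le:concen} and \ref{le:Lions0}. The only (minor) deviation is in the $N=4$ exponential bound, where you use monotonicity and convexity of $s\mapsto e^s-1-s$ to apply Lemma \ref{le:radiallemma} to $u_n$ and $u_0$ separately under $\alpha p_1M^2\le 8\pi^2$, whereas the paper applies it directly to $|u_n|+|u_0|$ with $\alpha p_1M^2\le 32\pi^2$; both work, and your variant has the slight advantage of not having to regard $|u_n|+|u_0|$ itself as an element of $\cd_{\cO(4)}(\RQ)$.
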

\begin{proof}
The proof is similar to that of Proposition \ref{pr:Jarek}, hence we only highlight the differences.\\
If $N\ge5$, likewise we prove that there exists $C>0$ such that for every measurable $\Omega\subset\R^N$, every $t\in[0,1]$, and every $n$
\[
\int_\Omega \big|F'\bigl(u_n + (t-1)u_0\bigr)u_0\big| \, dx \le C\left(\|u_0\|_{L^{2^*}(\Omega)} + \|u_0\|_{L^{2^{**}}(\Omega)}\right).
\]
If $N=4$, then taking $M>0$ such that $\||u_n| + |u_0|\|_{\cd} \le M$ for every $n$, $0 < \alpha < 32\pi^2/M^2$, $\s \ge 4$, $p_1,p_2,p_3 > 1$ such that $1/p_1 + 1/p_2 + 1/p_3 = 1$, $\a M^2 p_1 \le 32\pi^2$, $p_2 \ge 4/(\sigma-1)$, and $p_3 \ge 4$, and using Lemma \ref{le:radiallemma} 
instead of Lemma \ref{Th14RS}, we prove similarly that there exists $C>0$ such that for every measurable $\Omega\subset\R^N$, every $t\in[0,1]$, and every $n$
\[
\int_\Omega \big|F'\bigl(u_n + (t-1)u_0\bigr)u_0\big| \, dx \le C\left(\|u_0\|_{L^4(\Omega)} + \|u_0\|_{L^{p_3}(\Omega)}\right).
\]
If $N=3$, then again we prove in a similar way that there exist $T>0$ such that $\sup_n\|u_n\|_\infty \le T$ and $\widetilde{C} = \widetilde{C}(T) > 0$ such that $|F'(s)| \le \widetilde{C}|s|^5$ for all $s\in[-2T,2T]$ and, consequently, that there exists $C>0$ such that for every measurable $\Omega\subset\R^N$, every $t\in[0,1]$, and every $n$
\[
\int_\Omega \big|F'\bigl(u_n + (t-1)u_0\bigr)u_0\big| \, dx \le C\|u_0\|_{L^6(\Omega)}.
\]
Regardless of the dimension, we prove the first statement using Vitali's Theorem and the second one using Lemmas \ref{le:Lions0} and \ref{le:concen}.
\end{proof}

\begin{remark}
In Subsection \ref{cacchina}, we will see that, when $N=4$, if $F(s)$ is controlled by $e^{\a s^{4/3}}$  at infinity,  similar results hold also in $D^2_{\cO}(\RQ)$, with $\cO\subset \cO(4)$ compatible with $\RQ$.
\end{remark}

\begin{corollary}\label{co:emb0}
Let $N\ge3$ and $\cO\subset\cO(N)$ a subgroup compatible with $\RN$. Then $\cd_\cO(\RN) \hookrightarrow\hookrightarrow L^p(\RN)$, for every $p\in(2^*,2^{**})$.
\end{corollary}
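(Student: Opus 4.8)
The plan is to mimic, in the $\cd$-setting, the passage from Proposition \ref{pr:Jarek} to Corollary \ref{co:emb}: I would apply Proposition \ref{pr:JarekD} to the power function $F(s):=|s|^p$, with $p\in(2^*,2^{**})$ fixed, and then combine its two conclusions via Brezis--Lieb.

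First I would check that $F(s)=|s|^p$ fulfils all the hypotheses of Proposition \ref{pr:JarekD}. It is $C^1$ with $F(0)=0$ and $F'(s)=p|s|^{p-2}s$, and the one-sided bounds on $F'$ follow from $2^*-1<p-1<2^{**}-1$: for $N\ge5$ one has $|F'(s)|=p|s|^{p-1}\le C\bigl(|s|^{2^*-1}+|s|^{2^{**}-1}\bigr)$ for every $s$, the first power dominating for $|s|\le1$ and the second for $|s|\ge1$; for $N=4$, where $2^*=4$ and $2^{**}=+\infty$, the inequality $p-1>3$ together with the fact that $e^{\alpha s^2}-1-\alpha s^2$ outgrows every power at infinity gives $|F'(s)|\le C\bigl(|s|^3+(e^{\alpha s^2}-1-\alpha s^2)|s|^3\bigr)$ for every $\alpha>0$, so $\sigma=4$ works; for $N=3$, where $2^*=6$, the inequality $p-1>5$ gives $|F'(s)|\le C|s|^5$ on $[-1,1]$. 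The limit conditions required for the second conclusion of the proposition are equally immediate: $F(s)/|s|^{2^*}=|s|^{p-2^*}\to0$ as $s\to0$ since $p>2^*$, whereas $F(s)/|s|^{2^{**}}=|s|^{p-2^{**}}\to0$ as $|s|\to+\infty$ if $N\ge5$ and $F(s)/e^{\alpha s^2}\to0$ as $|s|\to+\infty$ for every $\alpha>0$ if $N=4$.

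Next I would take an arbitrary bounded sequence $\{u_n\}\subset\cd_\cO(\RN)$. Since $\cd(\RN)$ is a Hilbert space, after passing to a subsequence $u_n\weakto u_0$ in $\cd(\RN)$ with $u_0$ still $\cO$-invariant; and because $\cd(\RN)\hookrightarrow L^2_{\textup{loc}}(\RN)$ compactly, a further subsequence satisfies $u_n\to u_0$ a.e.\ in $\RN$. Then Proposition \ref{pr:JarekD}, applied to $F(s)=|s|^p$, yields at once the Brezis--Lieb-type splitting
\[
\irn\bigl(|u_n|^p-|u_n-u_0|^p\bigr)\,dx\to\irn|u_0|^p\,dx
\]
(that is, \eqref{conv1D}) and the full convergence $\irn|u_n|^p\,dx\to\irn|u_0|^p\,dx$ (that is, \eqref{conv2D}); subtracting the former from the latter gives $\|u_n-u_0\|_p\to0$. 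Hence every bounded sequence in $\cd_\cO(\RN)$ has a subsequence converging strongly in $L^p(\RN)$, i.e., the embedding is compact.

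I do not anticipate a real obstacle, the statement being essentially a corollary of Proposition \ref{pr:JarekD}. The only point that needs care is the case $N=4$: there Proposition \ref{pr:JarekD} is available only with $\cO=\cO(4)$, so the argument above directly gives the compact embedding for $\cd_{\cO(4)}(\RQ)$, while for a proper compatible subgroup of $\cO(4)$ one has to appeal instead to the sharper Adams-type inequality and the companion compactness statements to be established in Subsection \ref{cacchina} (which is legitimate because $|s|^p$ is controlled by $e^{\alpha s^{4/3}}$ at infinity). For $N\ne4$ no such distinction arises.
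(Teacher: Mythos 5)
Your argument is correct, and for $N\ge5$, $N=3$, and $N=4$ with $\cO=\cO(4)$ it is exactly the paper's proof: apply Proposition \ref{pr:JarekD} to $F(s)=|s|^p$ and subtract \eqref{conv1D} from \eqref{conv2D} to get $\|u_n-u_0\|_p\to0$ along a subsequence; your verification of the hypotheses for the power function is fine. The only place where you diverge from the paper is the case $N=4$ with $\cO\ne\cO(4)$. There you propose to route the argument through the sharper Adams-type inequality in $W^{1,4}(\RQ)$ with exponent $4/3$ (Lemma \ref{le:do}, Corollary \ref{corTh14RS-nr}) and its companion compactness statements; this does work (bounded sequences in $\cd_\cO(\RQ)$ are bounded in $W^{1,4}(\RQ)$ by Proposition \ref{premb}, $|s|^p$ is dominated by $e^{\alpha s^{4/3}}$ at infinity, and no circularity arises since those inequalities do not use Corollary \ref{co:emb0}), but it is heavier than necessary and, as you present it, defers to analogues of Lemma \ref{le:Lions0} and Proposition \ref{pr:JarekD} that you would still have to state and prove in the non-radial setting. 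The paper instead exploits that $F'(s)=p|s|^{p-2}s$ has purely polynomial growth: by H\"older and the embedding $\cd(\RQ)\hookrightarrow L^s(\RQ)$ for every $s\in[4,+\infty)$ one gets the uniform bound
\[
\int_\Omega \big|F'\bigl(u_n+(t-1)u_0\bigr)u_0\big|\,dx \le C\,\|u_0\|_{L^p(\Omega)}
\]
for all measurable $\Omega$, $t\in[0,1]$, and $n$, after which the Vitali argument and the Lions-type step (which for a pure power only needs interpolation between the vanishing local $L^2$ norms and local $W^{1,4}$ bounds, no exponential inequality) go through verbatim. So the paper's treatment of the exceptional case is more elementary and self-contained, while yours shows the case also fits the $e^{\alpha s^{4/3}}$ framework of Subsection \ref{cacchina}; either way the conclusion stands.
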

\begin{proof}
Letting $F(s) = |s|^p$, the statement follows from \eqref{conv1D} and \eqref{conv2D} provided that $N \ne 4$ or $\cO = \cO(N)$. Now assume $N=4$ and $\cO \ne \cO(4)$. Then there exists $C>0$ such that
\[
\int_\Omega \big|F'\bigl(u_n + (t-1)u_0\bigr)u_0\big| \, dx = (p-1) \int_\Omega |u_n + (t-1) u_0|^{p-1} |u_0| \, dx \le C \|u_0\|_{L^p(\Omega)}
\]
for every measurable $\Omega \subset \RQ$, every $t \in [0,1]$, and every $n$, hence we conclude as before.
\end{proof}

Finally, arguing in a similar way to the proofs of Propositions \ref{pr:Jarek'} and \ref{pr:JarekD} and using Corollary \ref{co:emb0}, we obtain the following.

\begin{proposition}\label{pr:JarekD'}
Let $N\ge3$ and $F\in C^1(\RN)$ be such that $F(0)=0$ and
\begin{alignat*}{2}
	& \lim_{s \to 0} \frac{F'(s)}{|s|^{2^*-1}} = \lim_{|s|\to+\infty} \frac{F'(s)}{|s|^{2^{**}-1}} = 0 && \text{ when } N\ge5,\\
	& \lim_{s \to 0} \frac{F'(s)}{|s|^3} = \lim_{|s|\to+\infty} \frac{F'(s)}{e^{\alpha s^{2}}} = 0 \ \text{ for all } \alpha>0 \ &&  \text{ when } N=4,\\
	& \lim_{s \to 0} \frac{F'(s)}{|s|^5} = 0 && \text{ when } N=3,
\end{alignat*}
and let $\{u_n\}$ be a bounded sequence of $\cO$-invariant functions in  $\cd(\RN)$, for a suitable subgroup  $\cO\subset\cO(N)$ compatible with $\RN$, such that $u_n \to u_0$ a.e. in $\RN$ for some $u_0\in\cd(\RN)$. If $N=4$, assume additionally that $\cO=\cO(4)$.\\
Then
\begin{equation*}
	\lim_n \irn F'(u_n)u_n \, dx = \irn F'(u_0)u_0 \, dx.
\end{equation*}
\end{proposition}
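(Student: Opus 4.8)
The plan is to follow the proof of Proposition~\ref{pr:Jarek'} almost verbatim, replacing the use of Proposition~\ref{pr:Jarek} by Proposition~\ref{pr:JarekD}, the embeddings of $\Hd$ by those of $\cd(\RN)$ gathered in Proposition~\ref{premb} and Corollary~\ref{coemb}, and the compact embedding of Corollary~\ref{co:emb} by the one of Corollary~\ref{co:emb0}. As a first step I would write
\[
\left|\irn\big(F'(u_n)u_n-F'(u_0)u_0\big)\,dx\right|\le\irn|F'(u_n)-F'(u_0)||u_0|\,dx+\irn|F'(u_n)||u_n-u_0|\,dx
\]
and estimate the two integrals on the right-hand side separately.

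For the first integral I would invoke Vitali's theorem. Since $u_n\to u_0$ a.e.\ and $F'$ is continuous, the integrand tends to $0$ a.e., so it is enough to check uniform integrability and tightness. Using the global growth bounds on $F'$ provided by the hypotheses together with the boundedness of $\{u_n\}$ in the relevant Lebesgue spaces (namely $L^{2^*}(\RN)\cap L^{2^{**}}(\RN)$ if $N\ge5$; $L^4(\RN)$ plus the exponential class handled by Corollary~\ref{corTh14RS-zero} if $N=4$; $L^\infty(\RN)$ if $N=3$), one obtains, exactly as in the proof of Proposition~\ref{pr:JarekD}, a bound of the form $\int_\Omega|F'(u_n)||u_0|\,dx\le C(\|u_0\|_{L^{q_1}(\Omega)}+\|u_0\|_{L^{q_2}(\Omega)})$ with $C$ independent of $n$ and of the measurable set $\Omega\subset\RN$. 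Since $u_0$ lies in those spaces, this yields both uniform integrability and, taking $\Omega=B_R^c$, tightness, so the first integral is $o_n(1)$.

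For the second integral I would fix $p\in(2^*,2^{**})$, so that $\|u_n-u_0\|_p\to0$ by Corollary~\ref{co:emb0} (this is where the extra hypothesis $\cO=\cO(4)$ for $N=4$ enters). Given $\e>0$, the decay of $F'$ at $0$ and at $\infty$ yields $c_\e>0$ such that $|F'(s)|\le\e(|s|^{2^*-1}+|s|^{2^{**}-1})+c_\e|s|^{p-1}$ if $N\ge5$; $|F'(s)|\le\e|s|^3+c_\e(e^{\alpha s^2}-1-\alpha s^2)|s|^{\sigma-1}$ for suitable $\sigma\ge4$ if $N=4$; and $|F'(s)|\le\e|s|^5+c_\e|s|^{p-1}$ for $|s|\le T$ if $N=3$, where $\sup_n\|u_n\|_\infty\le T$ and $p\in(6,\infty)$. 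Inserting these into $\irn|F'(u_n)||u_n-u_0|\,dx$ and applying H\"older's inequality, each $\e$-term is bounded by $\e$ times a quantity bounded uniformly in $n$ (using the boundedness of $\{u_n\}$ and $\{u_n-u_0\}$ in $L^{2^*}$, $L^{2^{**}}$, $L^4$, or $L^6$ as appropriate), while the remaining term is $c_\e\|u_n\|_p^{p-1}\|u_n-u_0\|_p\to0$; letting $n\to\infty$ and then $\e\to0$ closes the estimate. For the exponential term when $N=4$ I would argue as in Proposition~\ref{pr:JarekD}: with $M$ such that $\|u_n\|_{\cd}\le M$, $\alpha<32\pi^2/M^2$, and a three-fold H\"older inequality with exponents $p_1,p_2,p_3>1$ satisfying $1/p_1+1/p_2+1/p_3=1$, $\alpha p_1M^2\le32\pi^2$, $p_2\ge4/(\sigma-1)$, and $p_3\ge4$, Corollary~\ref{corTh14RS-zero} gives $\irn(e^{\alpha u_n^2}-1-\alpha u_n^2)|u_n|^{\sigma-1}|u_n-u_0|\,dx\le C\|u_n-u_0\|_{p_3}$; arranging additionally $p_3\in(4,\infty)=(2^*,2^{**})$, this vanishes by Corollary~\ref{co:emb0}.

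The step I expect to be the main obstacle is precisely this four-dimensional exponential case: one must keep in mind that the Adams-type inequality of Lemma~\ref{le:radiallemma} and Corollary~\ref{corTh14RS-zero} is available only for $\cO(4)$-invariant functions (whence the additional assumption $\cO=\cO(4)$), and one must tune the three H\"older exponents so that the exponential factor stays in the admissible range $\alpha p_1M^2\le32\pi^2$ while the surviving Lebesgue exponent $p_3$ lies strictly between $2^*$ and $2^{**}$, which is exactly what forces the corresponding norm to zero along the compact embedding $\cd_{\cO(4)}(\RQ)\hookrightarrow\hookrightarrow L^{p_3}(\RQ)$. The other dimensions are routine.
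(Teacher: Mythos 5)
Your proposal is correct and follows exactly the route the paper intends: the paper's own ``proof'' is just the remark that one argues as in Propositions \ref{pr:Jarek'} and \ref{pr:JarekD} using Corollary \ref{co:emb0}, which is precisely your splitting into the two integrals, the Vitali argument with the uniform $\Omega$-bounds, and the $\e$--$c_\e$ estimates combined with the compact embedding $\cd_\cO(\RN)\hookrightarrow\hookrightarrow L^p(\RN)$ for $p\in(2^*,2^{**})$. Your handling of the $N=4$ exponential term via Lemma \ref{le:radiallemma}/Corollary \ref{corTh14RS-zero} with the three-fold H\"older exponents and $p_3\in(4,+\infty)$, and your identification of why $\cO=\cO(4)$ is needed there, match the paper's argument.
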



\subsection{Proof of Theorem \ref{th:mm0} under assumption \eqref{g2''}}\label{p0}
\

The procedure is similar to the one in the positive mass case, therefore we skip some details.  We recall that, in this subsection, we assume (\ref{g3}), hence we consider $\cD = \cd_X(\RN)$ only when $N\ge6$ and  that $\beta > m = 0$. 
\\
Fix $q\in(2^*,2^{**})$ and define
\[
h(s) := \left(\frac\ell2 s^{2^*-1} + g(s)\right)_+
\quad\text{ for } s\ge0,
\]
extending it oddly for $s<0$, and $\overline{h},H,\overline{H} \colon \R \to \R$ as before. Then a result similar to Lemma \ref{le:h} holds.

\begin{lemma}\label{le:h0}
The following properties are satisfied.
\begin{enumerate}[label=(\alph{*}),ref=\alph{*}]
	\item There exists $\delta_0>0$ such that $\overline H(s) = \overline h(s) = H(s) = h(s) = 0$ for every $s\in[-\delta_0,\delta_0]$.
	\item The functions  $h$ and $\overline h$ satisfy \eqref{g3}. Moreover, if $N\ge5$, then 
	$$\lim_{s\to+\infty}\frac{\overline H(s)}{s^{2^{**}}} = \lim_{s\to+\infty}\frac{H(s)}{s^{2^{**}}} = 0;$$
	if $N=4$, then for every $\a>0$ $$\lim_{s\to+\infty}\frac{\overline{H}(s)}{e^{\a s^2}} = \lim_{s\to+\infty}\frac{H(s)}{e^{\a s^2}} = 0.$$
	\item For every $s\ge0$, we have that $\overline h(s) \ge h(s) \ge g(s) + \ell s^{2^*-1}/2$ and $\overline H(s) \ge H(s) \ge G(s) + \ell s^{2^*}/(2\!\cdot \!2^*)$.
	\item The function $s\mapsto\overline{h}(s)/s^{q-1}$ is non-decreasing on $(0,+\infty)$ and $\overline{h}(s)s \ge q\overline{H}(s) \ge 0$ for all $s\in\R$.
\end{enumerate}
\end{lemma}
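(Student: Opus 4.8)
The plan is to repeat the proof of Lemma~\ref{le:h} (which in turn follows \cite[Lemma~2.1, Corollary~2.2]{HIT}) essentially verbatim, the single structural change being that the linear lower-order term $m's$ is replaced everywhere by $\frac{\ell}{2}s^{2^*-1}$: what was ``subquadratic behaviour of $g$ near the origin'' in the positive mass case is now ``sub-$2^*$ behaviour near the origin'', and this is precisely the content of the $\limsup$ in \eqref{g2''}. I will establish the four items of Lemma~\ref{le:h0} in order.

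First, for (a): since $\limsup_{s\to0}g(s)/(|s|^{2^*-2}s)=-\ell<0$, there is $\delta_0>0$ with $g(s)\le-\frac{\ell}{2}s^{2^*-1}$ for all $s\in(0,\delta_0]$, hence $\frac{\ell}{2}s^{2^*-1}+g(s)\le0$ and $h(s)=0$ on $(0,\delta_0]$; oddness of $h$ extends this to $[-\delta_0,\delta_0]$. Then $H(s)=\int_0^s h(t)\,dt=0$ on $[-\delta_0,\delta_0]$; moreover $t\mapsto h(t)/t^{q-1}$ is continuous on $(0,+\infty)$ and vanishes on $(0,\delta_0]$, so the supremum defining $\overline h$ is finite (this is also where the well-definedness of $\overline h$ and $\overline H$ comes from) and $\overline h(s)=0$, hence $\overline H(s)=0$, on $[-\delta_0,\delta_0]$.

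The only point that is not pure bookkeeping, and the one I would single out as the (mild) main obstacle, is item (b), in particular transferring the growth bounds from $h$ to its concavification $\overline h$. For $h$ itself there is nothing to do: from $0\le h(s)\le\frac{\ell}{2}s^{2^*-1}+(g(s))_+$ for $s\ge0$, from $2^*-1<2^{**}-1$, and from \eqref{g3} for $g$, one reads off that $h$ satisfies \eqref{g3} (and, when $N=4$, the exponential decay), and integration gives $H(s)/s^{2^{**}}\to0$ if $N\ge5$ and $H(s)/e^{\alpha s^2}\to0$ if $N=4$. For $\overline h$, when $N\ge5$ I would argue as follows: given $\varepsilon>0$ pick $R>0$ with $h(t)\le\varepsilon t^{2^{**}-1}$ for $t>R$ and set $C_\varepsilon:=\sup_{0<t\le R}h(t)/t^{q-1}<+\infty$; then $h(t)/t^{q-1}\le\varepsilon t^{2^{**}-q}+C_\varepsilon$ for all $t>0$, so $\overline h(s)=s^{q-1}\sup_{0<t\le s}h(t)/t^{q-1}\le\varepsilon s^{2^{**}-1}+C_\varepsilon s^{q-1}$, whence $\limsup_{s\to+\infty}\overline h(s)/s^{2^{**}-1}\le\varepsilon$ and, $\varepsilon$ being arbitrary, $\overline h(s)=o(s^{2^{**}-1})$; integrating, $\overline H(s)=o(s^{2^{**}})$. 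When $N=4$ the same scheme works, replacing $t^{2^{**}-q}$ by the majorant $e^{\alpha t^2/2}/t^{q-1}$, which is increasing for $t$ large, and using $\int_0^s e^{\alpha t^2/2}\,dt=o(e^{\alpha s^2})$; when $N=3$ there is nothing to prove. I expect no genuine difficulty here beyond care with the exponential case.

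Finally, for (c): the elementary inequality $x_+\ge x$ gives $h(s)\ge\frac{\ell}{2}s^{2^*-1}+g(s)$ for $s\ge0$, and taking $t=s$ in the supremum defining $\overline h$ gives $\overline h(s)\ge h(s)$; integrating over $[0,s]$ yields $\overline H(s)\ge H(s)\ge G(s)+\ell s^{2^*}/(2\cdot 2^*)$. For (d): $s\mapsto\overline h(s)/s^{q-1}=\sup_{0<t\le s}h(t)/t^{q-1}$ is nondecreasing because it is a supremum over a nondecreasing family of intervals, so for $s>0$
\[
\overline H(s)=\int_0^s\frac{\overline h(t)}{t^{q-1}}\,t^{q-1}\,dt\le\frac{\overline h(s)}{s^{q-1}}\int_0^s t^{q-1}\,dt=\frac{\overline h(s)\,s}{q};
\]
since $\overline h$ is odd, $\overline H$ is even and $\overline h(s)s\ge0$ for every $s\in\R$, which extends $\overline h(s)s\ge q\overline H(s)\ge0$ from $[0,+\infty)$ to all of $\R$. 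The very same estimates, applied to $h$ and $H$ in place of $\overline h$ and $\overline H$, give the corresponding statements for $h$ and $H$, exactly as in the positive mass case.
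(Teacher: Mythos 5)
Your proof is correct and follows exactly the route the paper intends: Lemma \ref{le:h0} is proved by repeating the argument of Lemma \ref{le:h} (i.e.\ of \cite[Lemma 2.1, Corollary 2.2]{HIT}) with the term $m's$ replaced by $\tfrac{\ell}{2}s^{2^*-1}$, using the limsup in \eqref{g2''} to get $h\equiv0$ near the origin and then transferring the growth at infinity from $h$ to $\overline h$ and integrating, just as you do. The only caveat is your closing sentence: the ``same statements for $h$ and $H$'' should be understood as the growth estimates (the analogues of \eqref{boundh}--\eqref{boundH}), not item (d), since $h(s)/s^{q-1}$ need not be monotone — but this does not affect the proof of the lemma itself.
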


Consequently, conditions similar to \eqref{boundha}--\eqref{boundhc} and \eqref{boundHa}--\eqref{boundHc} are satisfied, i.e.,
\begin{alignat*}{2}
&\exists C>0 \text{ such that } \overline{h}(s) \le C s^{2^{**}-1} \hbox{ for }  s \ge 0,
&&\text{ if } N\ge5, \\
&\forall \a>0, \s \ge 4\, \exists C>0 \text{ such that } \overline{h}(s) \le C \big(e^{\alpha s^{2}} - 1 - \a s^2\big)s^{\s-1} \hbox{ for }  s \ge 0,\ \ 
&&\text{ if } N=4, \\
&\forall T>0, \s \ge 6\, \exists C>0 \text{ such that } \overline{h}(s) \le C s^{\s-1}  \hbox{ for } s \in [0,T],
&&\text{ if } N=3,
\end{alignat*}
and
\begin{alignat*}{2}
&\exists C>0 \text{ such that } \overline{H}(s) \le C|s|^{2^{**}} \hbox{ for } s\in \R,
&&\text{ if } N\ge5, \\
&\forall \a>0, \s \ge 4\, \exists C>0 \text{ such that } \overline{H}(s) \le C \big(e^{\alpha s^{2}} - 1 - \a s^2\big)|s|^{\s} \hbox{ for } s\in \R,\ \
&& \text{ if } N=4, \\
&\forall T>0, \s \ge 6\, \exists C>0 \text{ such that } \overline{H}(s) \le C |s|^{\s} \hbox{ for }  s \in [-T,T],\ \ 
&&\text{ if } N=3.
\end{alignat*}
The very same estimates hold for $h$ and $H$ respectively. If we define $\overline{I} \colon \cd(\RN) \to \R$ as
\[
\overline{I}(u) := \frac12 \irn \left[(\Delta u)^2 + \b |\nabla u|^2 + \frac{\ell}{2^*}|u|^{2^*} \right]dx - \irn \overline{H}(u) \, dx,
\]
then the following result holds.
\begin{proposition}\label{pr:PS2zero}
The functionals $I$ and $\overline{I}$ satisfy:
\begin{enumerate}[label=(\alph{*}),ref=\alph{*}]
	\item \label{aPS20} $\overline{I} \le I$;
	\item\label{bPS20} there exist $\rho,\mu>0$ such that $I(u) \ge \overline{I}(u) \ge \mu$ for every $\|u\| = \rho$ and $I(u) \ge \overline{I}(u) \ge 0$ for every $\|u\| \le \rho$;
	\item \label{cPS20} for every integer $k\ge1$ there exists an odd map $\gamma_k \in C(\mathbb{S}^{k-1},\cD)$ such that $\overline I\circ\gamma_k \le I\circ\gamma_k < 0$;
	\item \label{dPS20} $\overline{I}$ satisfies the Palais--Smale condition if restricted to $\cD$.
\end{enumerate}
\end{proposition}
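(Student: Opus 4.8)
The plan is to follow the proof of Proposition~\ref{pr:PS2} step by step, replacing each ingredient of the positive mass case by its zero mass counterpart: Lemma~\ref{le:h0} in place of Lemma~\ref{le:h}, the embeddings of Corollary~\ref{coemb} and the Adams-type estimate of Corollary~\ref{corTh14RS-zero} in place of the Sobolev embeddings of $\Hd$ and of Corollary~\ref{corTh14RS}, and Proposition~\ref{pr:JarekD'} in place of Proposition~\ref{pr:Jarek'}.

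Point~(\ref{aPS20}) is immediate from Lemma~\ref{le:h0}(c), which gives $\overline H(s)\ge G(s)+\frac{\ell}{2\cdot2^*}|s|^{2^*}$, so that $I(u)-\overline I(u)=\irn\bigl(\overline H(u)-G(u)-\frac{\ell}{2\cdot2^*}|u|^{2^*}\bigr)\,dx\ge0$. For point~(\ref{bPS20}), by~(\ref{aPS20}) it suffices to bound $\overline I$ from below near the origin; after discarding the non-negative term $\frac{\ell}{2\cdot2^*}\|u\|_{2^*}^{2^*}$ one uses $\overline H(s)\le C|s|^{2^{**}}$ together with $\cd(\RN)\hookrightarrow L^{2^{**}}(\RN)$ when $N\ge5$, the estimate $\overline H(s)\le C(e^{\alpha s^2}-1-\alpha s^2)|s|^\sigma$ (with $\sigma\ge4$ and $\alpha<32\pi^2$) together with Corollary~\ref{corTh14RS-zero} (licit because here $\cD=\cd_{\cO(4)}(\RQ)$) when $N=4$, and $\cd(\RT)\hookrightarrow L^\infty(\RT)$ together with $\overline H(s)\le C|s|^\sigma$ ($\sigma\ge6$) on bounded sets when $N=3$; in each case $\overline I(u)\ge c\|u\|_{\cd}^2-C\|u\|_{\cd}^p$ with $p>2$ for $\|u\|_{\cd}\le1$, and the claim follows by choosing $\rho$ small. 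Point~(\ref{cPS20}) is proved exactly as~(\ref{cPS2}): by~(\ref{aPS20}) it is enough to make $I\circ\gamma_k<0$; arguing as in \cite[Proof of Theorem 10]{BL2} (if $\cD=\cd_{\cO(N)}(\RN)$) or \cite[Proof of Lemma 3.4]{jl} (if $\cD=\cd_X(\RN)$), and smoothing the piecewise affine functions so that their Laplacians lie in $L^2(\RN)$, one obtains an odd $\pi_k\in C(\mathbb{S}^{k-1},\cD)$ with $\irn G(\pi_k(\xi))\,dx\ge1$, and then $\gamma_k(\xi):=\pi_k(\xi)(\cdot/\lambda)$ satisfies $I(\gamma_k(\xi))\le\frac{\lambda^{N-4}}2\|\Delta\pi_k(\xi)\|_2^2+\frac{\beta\lambda^{N-2}}2\|\nabla\pi_k(\xi)\|_2^2-\lambda^N$, which is negative, uniformly in $\xi\in\mathbb{S}^{k-1}$, for $\lambda$ large since $N>N-2>N-4$.

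The core is point~(\ref{dPS20}). Given a Palais--Smale sequence $\{u_n\}\subset\cD$ for $\overline I|_{\cD}$, I would first prove its boundedness via the identity
\[
\overline I(u_n)-\frac1q\overline I'(u_n)[u_n]=\Bigl(\frac12-\frac1q\Bigr)\bigl(\|\Delta u_n\|_2^2+\beta\|\nabla u_n\|_2^2\bigr)+\frac{\ell}2\Bigl(\frac1{2^*}-\frac1q\Bigr)\|u_n\|_{2^*}^{2^*}+\frac1q\irn\bigl(\overline h(u_n)u_n-q\overline H(u_n)\bigr)dx,
\]
whose right-hand side is bounded below by $c\|u_n\|_{\cd}^2$ because $q>2^*>2$ and because of Lemma~\ref{le:h0}(d); it is exactly the critical term $\frac{\ell}{2^*}|u|^{2^*}$ in $\overline I$ — absent in \cite{HIT} — that has to be tracked here, but $q>2^*$ makes its contribution nonnegative. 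Passing to a subsequence, $u_n\weakto u_0$ in $\cd(\RN)$, $u_n\to u_0$ a.e.\ in $\RN$, and $u_0\in\cD$ is a critical point of $\overline I|_{\cD}$. From $\overline I'(u_n)[u_n]\to0$, Proposition~\ref{pr:JarekD'} applied with $F'=\overline h$ (admissible: $\overline h$ vanishes near $0$, has the right growth at infinity by Lemma~\ref{le:h0}(b), and $\cO=\cO(4)$ when $N=4$), and the weak lower semicontinuity of $u\mapsto\|u\|_{2^*}^{2^*}$, one deduces
\[
\limsup_n\bigl(\|\Delta u_n\|_2^2+\beta\|\nabla u_n\|_2^2\bigr)=\irn\overline h(u_0)u_0\,dx-\frac{\ell}2\liminf_n\|u_n\|_{2^*}^{2^*}\le\irn\overline h(u_0)u_0\,dx-\frac{\ell}2\|u_0\|_{2^*}^{2^*}=\|\Delta u_0\|_2^2+\beta\|\nabla u_0\|_2^2,
\]
the last equality being $\overline I'(u_0)[u_0]=0$. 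Since $(\|\Delta\cdot\|_2^2+\beta\|\nabla\cdot\|_2^2)^{1/2}$ is a Hilbert norm equivalent to $\|\cdot\|_{\cd}$ (recall $\beta>0$), this convergence of norms together with $u_n\weakto u_0$ gives $u_n\to u_0$ in $\cD$.

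I expect point~(\ref{dPS20}) to be the only real obstacle: one must check that the extra $L^{2^*}$ term of $\overline I$, sitting precisely at the threshold of compactness in $\cd(\RN)$, spoils neither the boundedness of Palais--Smale sequences nor the passage to the strong limit. The resolution is that this term carries the favourable sign in both places — nonnegative in the identity used for boundedness, thanks to $q>2^*$, and such that Fatou's inequality points the right way in the identity used for strong convergence — so that the scheme of the positive mass case then goes through unchanged.
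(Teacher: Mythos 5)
Your proposal is correct and follows essentially the same route as the paper, which proves Proposition \ref{pr:PS2zero} by repeating the argument of Proposition \ref{pr:PS2} with Corollary \ref{corTh14RS-zero} replacing Corollary \ref{corTh14RS} in (\ref{bPS20}) for $N=4$ and Proposition \ref{pr:JarekD'} replacing Proposition \ref{pr:Jarek'} in (\ref{dPS20}); your treatment of the extra $\frac{\ell}{2^*}|u|^{2^*}$ term in the boundedness identity (using $q>2^*$) and in the strong-convergence step (via weak lower semicontinuity and Fatou) is exactly the point the paper leaves implicit, and it is handled correctly. The only divergence is point (\ref{cPS20}), where the paper simply reuses the maps $\gamma_k$ already constructed in Proposition \ref{pr:PS2}, observing that $\cH \hookrightarrow \cD$, instead of re-running the construction of \cite{BL2} and \cite{jl} inside $\cD$ as you do; both are valid.
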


\begin{proof}
We can argue as in  Proposition \ref{pr:PS2}. In particular, we use Corollary \ref{corTh14RS-zero} instead of Corollary \ref{corTh14RS} in point \eqref{bPS20} when $N=4$ and Proposition \ref{pr:JarekD'} instead of Proposition \ref{pr:Jarek'} in point \eqref{dPS20}, while concerning point \eqref{cPS20}, we simply observe that $\cH \hookrightarrow \cD$ and so we can consider the same $\gamma_k$ given in Proposition \ref{pr:PS2}.
\end{proof}

For every integer $k\ge1$ we define
\begin{equation}\label{Gamma0}
\Gamma_k^0 := \{ \gamma\in C(\B^k,\cD) : \gamma \text{ is odd and } \gamma|_{\partial\B^k} = \gamma_k \} \supset \Gamma_k \ne \emptyset,
\end{equation}
where
$\gamma_k$ is given in point \eqref{cPS20} of Proposition \ref{pr:PS2zero}.\\
We also define
\begin{equation}\label{minimax}
\sigma_k := \inf_{\gamma \in \Gamma_k^0}\max_{\xi\in\B^k}I\bigl(\gamma(\xi)\bigr), \qquad c_k := \inf_{\gamma \in \Gamma_k^0}\max_{\xi\in\B^k}\overline{I}\bigl(\gamma(\xi)\bigr).
\end{equation}
Observe that $\sigma_k \ge c_k \ge \mu$,  where
$\mu>0$ has been introduced  in point \eqref{bPS20} of Proposition \ref{pr:PS2zero}, and the analogous of 
Proposition \ref{pr:infty} still holds. Therefore $\lim_k \sigma_k = +\infty$.

Next, we define $J \in C^1\bigl(\R \times \cd(\RN),\R\bigr)$ as in \eqref{J} equipping $\R \times \cd(\RN)$ with the standard product norm $\|(s,u)\|_{\R\times\cd} = (s^2 + \|u\|_{\cd}^2)^{1/2}$ and
\[
\widetilde{\sigma}_k := \inf_{\widetilde\gamma \in \widetilde{\Gamma}_k^0} \max_{\xi\in\B^k} J\bigl(\widetilde{\gamma}(\xi)\bigr),
\]
\[
\widetilde{\Gamma}_k^0 := \left\{ \widetilde{\gamma} = (\widetilde{\gamma}_1,\widetilde{\gamma}_2) \in C(\B^k,\R\times \cD) : \widetilde{\gamma}_1 \text{ is even, } \widetilde{\gamma}_2 \text{ is odd, and } \widetilde{\gamma}|_{\de\B^k} = (0,\gamma_k) \right\},
\]
thus still $\widetilde{\sigma}_k = \s_k$ and Proposition \ref{pr:sqJ} holds, except now $\{u_n\} \subset \cD$ and $\de_uJ(s_n,u_n) \to 0$ in $\cD^*$.

\begin{proof}[Proof of Theorem \ref{th:mm0} under assumption \eqref{g2''}]
Fix $k\ge1$ and consider the corresponding sequence $\{(s_n,u_n)\} \subset \R \times \cD$ given in
 the zero-mass-regime analogous of Proposition \ref{pr:sqJ}. We check that the sequence $\{u_n\}$ is bounded in $\cd(\RN)$ with the same proof as in Lemma \ref{le:bdd} when $N\ge3$ (in fact, the proof is easier because we do not prove that $\|u_n\|_2$ is bounded) and that, up to a subsequence, there exists $u_0 \in \cD$ such that $u_n \to u_0$ in $\cd(\RN)$ arguing as in the proof of Lemma \ref{le:cpt} with $\ell|u|^{2^*}/2$ instead of 
$m'u^2$. Then
\[
I(u_0) = J(0,u_0) = \s_k \quad \text{and} \quad I'(u_0) = \de_uJ(0,u_0) = 0,
\]
and so we conclude.
\end{proof}

\subsection{Proof of Theorem \ref{th:mm0} under assumption \eqref{g2'}}\label{p00}
\

Also in this subsection, we assume (\ref{g3}) and so we consider $\cD = \cd_X(\RN)$ only when $N\ge6$.

The main difference between (\ref{g2'}) and (\ref{g2''}) is that, when the former holds, we can no longer define the function $\overline h$, which in turn is used to prove that the sequence of critical values $\{\s_k\}$ 
defined in \eqref{minimax} diverges positively. 
We follow the approach of \cite{CGT} (see also \cite{HT,IT}).

Let us consider the functional $P \colon \cd(\RN) \to \R$ defined as
\[
P(u) := \frac{N-4}{2} \|\Delta u\|_2^2 + \beta \frac{N-2}{2} \|\nabla u\|^2 -N\irn G(u) \, dx.
\]
Let $b \in \R$. We say that $I$ restricted to $\cD$  satisfies the Palais--Smale--Poho\v{z}aev condition at the level $b$, $(PSP)_b$ for short,
if and only if every sequence $\{u_n\} \subset \cD$ such that
\[
\lim_n I(u_n) = b, \quad \lim_n P(u_n) = 0, \quad \lim_n \|I'(u_n)\|_{\cD,*} = 0
\]
has a (strongly) convergent subsequence, where we recall that $\|\cdot\|_{\cD,*}$ is the dual norm in $\cD^*$ induced by $\|\cdot\|_{\cd}$. We also define
\[
K_b := \left\{u \in \cD : I(u) = b, \, P(u) = 0, \, \|I'(u)\|_{\cD,*} = 0\right\}.
\]
Let us recall the functional $J \colon \R \times \cd(\RN) \to \R$ given by \eqref{J}. By explicit computations we obtain the following result.

\begin{lemma}\label{le:comput}
For every $s,t \in \R$ and every $u,v \in \cd(\RN)$ there holds:
\begin{itemize}
	\item $\de_s J(s,u) = P\bigl(u(e^{-s}\cdot)\bigr)$;
	\item $\de_u J(s,u)[v] = I'\bigl(u(e^{-s}\cdot)\bigr) \left[v(e^{-s}\cdot)\right]$;
	\item $J\bigl(s+t,u(e^{t}\cdot)\bigr) = J(s,u)$.
\end{itemize}
\end{lemma}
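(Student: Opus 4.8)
The statement is a direct computation, so the plan is simply to unwind the definitions while keeping track of the scaling factors. The starting point is the set of elementary identities: for $\lambda>0$ and $w\in\cd(\RN)$, writing $w_\lambda:=w(\lambda\cdot)$, the change of variables $y=\lambda x$ together with $\Delta w_\lambda=\lambda^2(\Delta w)(\lambda\cdot)$ and $\nabla w_\lambda=\lambda(\nabla w)(\lambda\cdot)$ yields $\|\Delta w_\lambda\|_2^2=\lambda^{4-N}\|\Delta w\|_2^2$, $\|\nabla w_\lambda\|_2^2=\lambda^{2-N}\|\nabla w\|_2^2$, and $\irn G(w_\lambda)\,dx=\lambda^{-N}\irn G(w)\,dx$. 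Taking $\lambda=e^{-s}$ and inserting these into \eqref{J} recovers the relation $J(s,u)=I\bigl(u(e^{-s}\cdot)\bigr)$ already observed after \eqref{J}, which I will use below.

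For the first identity I differentiate the explicit expression \eqref{J} with respect to $s$ (legitimate since the three $L^2$-type integrals do not depend on $s$), obtaining
\[
\de_sJ(s,u)=\frac{N-4}{2}\,e^{s(N-4)}\|\Delta u\|_2^2+\frac{\beta(N-2)}{2}\,e^{s(N-2)}\|\nabla u\|_2^2-N\,e^{sN}\irn G(u)\,dx.
\]
Applying the scaling identities with $\lambda=e^{-s}$ to the definition of $P$ shows at once that the right-hand side equals $P\bigl(u(e^{-s}\cdot)\bigr)$.

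For the second identity I compute the directional derivative of $J(s,\cdot)$ at $u$ along $v$ from \eqref{J}, namely
\[
\de_uJ(s,u)[v]=e^{s(N-4)}\irn\Delta u\,\Delta v\,dx+\beta\,e^{s(N-2)}\irn\nabla u\cdot\nabla v\,dx-e^{sN}\irn g(u)\,v\,dx.
\]
Then, setting $w:=u(e^{-s}\cdot)$ and $\phi:=v(e^{-s}\cdot)$, I change variables via $y=e^{-s}x$ in $I'(w)[\phi]=\irn\bigl(\Delta w\,\Delta\phi+\beta\,\nabla w\cdot\nabla\phi-g(w)\phi\bigr)\,dx$, using $\Delta w=e^{-2s}(\Delta u)(e^{-s}\cdot)$, $\nabla w=e^{-s}(\nabla u)(e^{-s}\cdot)$ and the analogous relations for $\phi$; the three resulting terms coincide with those in the display above, so $\de_uJ(s,u)[v]=I'\bigl(u(e^{-s}\cdot)\bigr)\bigl[v(e^{-s}\cdot)\bigr]$.

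Finally, the third identity follows from $J(s,u)=I\bigl(u(e^{-s}\cdot)\bigr)$ and the group property of dilations: with $\ut:=u(e^{t}\cdot)$ one has $\ut\bigl(e^{-(s+t)}\cdot\bigr)=u\bigl(e^{-s}\cdot\bigr)$, hence $J\bigl(s+t,\ut\bigr)=I\bigl(\ut(e^{-(s+t)}\cdot)\bigr)=I\bigl(u(e^{-s}\cdot)\bigr)=J(s,u)$; alternatively it can be checked directly from \eqref{J} upon noting that the factors $e^{\pm t(N-4)}$, $e^{\pm t(N-2)}$, and $e^{\pm tN}$ cancel pairwise. There is no genuine obstacle in this lemma: everything is a routine change of variables, and the only point deserving attention is the bookkeeping of the Jacobian and derivative exponents, so that they recombine into the exponentials that appear in $J$, $P$, and $I'$.
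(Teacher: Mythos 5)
Your computations are correct, and they are exactly the ``explicit computations'' the paper invokes without writing out: the paper gives no detailed proof of Lemma \ref{le:comput}, and your scaling identities, term-by-term differentiation of \eqref{J}, and change of variables are the intended (and essentially only) argument. Nothing further is needed.
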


In particular, $P(u) = \de_s J(0,u)$. Then, arguing  in  a similar way to  Lemmas \ref{le:bdd} and \ref{le:cpt} with $s_n = 0$,  without the term $m'u^2$,  and $g$ instead of $h$, see also Proof of Theorem \ref{th:mm0} under assumption \eqref{g2''}, we obtain the following.

\begin{proposition}\label{pr:psp}
For every $b \in \R$, the functional $I$, restricted to $\cD$, satisfies $(PSP)_b$.
\end{proposition}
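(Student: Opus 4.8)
The plan is to reproduce, with the obvious simplifications, the arguments already used for Lemmas \ref{le:bdd} and \ref{le:cpt}: one sets $s_n \equiv 0$, drops every occurrence of the $m'u^2$-term, and works with $g$ in place of $h$, exactly as was done when proving Theorem \ref{th:mm0} under assumption \eqref{g2''}. So, fixing $b \in \R$, I start from a sequence $\{u_n\} \subset \cD$ with $I(u_n) \to b$, $P(u_n) \to 0$ and $\|I'(u_n)\|_{\cD,*} \to 0$, and aim to extract a strongly convergent subsequence.

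The first step is boundedness of $\{u_n\}$ in $\cd(\RN)$. As in the derivation of \eqref{forlater}, the combination $N I(u_n) - P(u_n) \to Nb$ gives $2\|\Delta u_n\|_2^2 + \beta\|\nabla u_n\|_2^2 \to Nb$, and since $\beta > 0$ in the zero mass regime this forces both $\{\|\Delta u_n\|_2\}$ and $\{\|\nabla u_n\|_2\}$ to be bounded; this is in fact even simpler than Lemma \ref{le:bdd}, since here there is no $L^2$-norm to control. Passing to a subsequence, $u_n \rightharpoonup u_0$ in $\cd(\RN)$ and $u_n \to u_0$ a.e. in $\RN$, with $u_0 \in \cD$ because $\cD$ is a closed subspace.

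Next I would check, as in Lemma \ref{le:cpt}, that $u_0$ is a critical point of $I$ restricted to $\cD$: one tests $I'(u_n) \to 0$ in $\cD^*$ against $\cO$-invariant functions in $C_c^\infty(\RN)$, passing to the limit in the quadratic part by weak convergence and in the nonlinear part via the local compact Sobolev embeddings together with the growth of $g$ supplied by \eqref{g2'} at the origin and \eqref{g3} at infinity; testing the resulting identity against $u_0$ yields $\|\Delta u_0\|_2^2 + \beta\|\nabla u_0\|_2^2 = \irn g(u_0)u_0\,dx$. On the other hand, from $\|I'(u_n)\|_{\cD,*}\to0$ and boundedness one gets $I'(u_n)[u_n] \to 0$, that is, $\|\Delta u_n\|_2^2 + \beta\|\nabla u_n\|_2^2 = \irn g(u_n)u_n\,dx + o_n(1)$. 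The decisive point, playing the role of the compactness step in Lemma \ref{le:cpt}, is that $F' := g$ meets the hypotheses of Proposition \ref{pr:JarekD'}: the behaviour at $0$ is exactly \eqref{g2'} and the behaviour at $\infty$ is \eqref{g3}, bearing in mind that for $N = 4$ this requires $\cO = \cO(4)$, which is consistent with assuming \eqref{g3} throughout this subsection. Proposition \ref{pr:JarekD'} then gives $\irn g(u_n)u_n\,dx \to \irn g(u_0)u_0\,dx$, whence $\|\Delta u_n\|_2^2 + \beta\|\nabla u_n\|_2^2 \to \|\Delta u_0\|_2^2 + \beta\|\nabla u_0\|_2^2$.

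To conclude, I would observe that, $\beta$ being positive, $u \mapsto (\|\Delta u\|_2^2 + \beta\|\nabla u\|_2^2)^{1/2}$ is a Hilbert norm on $\cd(\RN)$ equivalent to $\|\cdot\|_{\cd}$ and inducing the same weak topology; convergence of this norm together with $u_n \rightharpoonup u_0$ then forces $u_n \to u_0$ strongly in $\cd(\RN)$, hence in $\cD$, and $(PSP)_b$ follows. I expect the main difficulty to lie in the nonlinear term, namely making sure that $g$ really falls within the scope of Proposition \ref{pr:JarekD'} — and accepting the ensuing restriction to the radial space $\cd_{\cO(4)}(\RQ)$ when $N=4$, which ultimately rests on the Adams-type inequality of Lemma \ref{le:radiallemma} — together with the passage to the limit $\irn g(u_n)v\,dx \to \irn g(u_0)v\,dx$ needed to identify the weak limit as a solution.
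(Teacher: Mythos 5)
Your proof is correct and takes essentially the same route as the paper, whose own argument is precisely to rerun Lemmas \ref{le:bdd} and \ref{le:cpt} with $s_n=0$, without the $m'u^2$ term, and with $g$ in place of $h$. Your observation that under \eqref{g2'} the nonlinearity $g$ itself satisfies the hypotheses of Proposition \ref{pr:JarekD'} (with $\cO=\cO(4)$ forced when $N=4$) is exactly the simplification the paper alludes to: the Fatou-remainder step of Lemma \ref{le:cpt} becomes vacuous and the limit $\irn g(u_n)u_n\,dx \to \irn g(u_0)u_0\,dx$ together with weak convergence yields strong convergence.
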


In fact, taking into account \eqref{forlater}, we observe that sequences $\{u_n\}$ such that $I(u_n) \to b$ and $P(u_n) \to 0$ exist only if $b \ge 0$.

Now let us focus on $J$. We consider $\M := \R \times \cD$ as a Hilbert manifold and define the equivalent norm
\[
\|(t,v)\|_s := \|(t,v)\|_{(s,u)} := \left(t^2 + e^{(N-4)s} \|\Delta v\|_2^2 + e^{(N-2)s} \|\nabla v\|_2^2\right)^{1/2}
\]
for every $(t,v) \in T_{(s,u)}\M$, $(s,u) \in \M$. The corresponding dual norm on $T_{(s,u)}^*\M$ will be denoted by $\|\cdot\|_{s,*}$. Moreover, for every $\tau \in \R$, every $(s,u) \in \M$, and every $(t,v) \in T_{(s,u)}\M$
\begin{equation}\label{norm}
\big\|\bigl(t,v(e^\tau\cdot)\bigr)\big\|_{s+\tau} = \|(t,v)\|_s.
\end{equation}
Given $(s,u),(t,v) \in \M$, we define the distance between them as
\[
\fd\bigl((s,u),(t,v)\bigr) := \inf_{\gamma \in \Gamma} \int_0^1 \|\dot\gamma(\tau)\|_{\gamma(\tau)} \, d\tau,
\]
where
\[
\Gamma := \left\{\gamma \in C^1([0,1],\M) : \gamma(0) = (s,u) \text{ and } \gamma(1) = (t,v)\right\}.
\]
In virtue of \eqref{norm}, we obtain that, for every $\tau \in \R$ and every $(s,u),(t,v) \in \M$,
\begin{equation}\label{dist-inv}
\fd\bigl((s,u),(t,v)\bigr) = \fd\Bigl(\bigl(s+\tau,u(e^\tau\cdot)\bigr), \bigl(t+\tau,v(e^\tau\cdot)\bigr)\Bigr).
\end{equation}
As usual, if $(s,u) \in \M$ and $A\subset \M$, then
\[
\fd\bigl((s,u),A\bigr) = \inf_{(t,v) \in A} \fd\bigl((s,u),(t,v)\bigr).
\]
From Lemma \ref{le:comput}, for every $(s,u) \in \M$
\[
\|J'(s,u)\|_{s,*}^2 = \bigl(P\bigl(u(e^{-s}\cdot)\bigr)\bigr)^2 + \big\|I'\bigl(u(e^{-s}\cdot)\bigr)\big\|_{\cD,*}^2.
\]
For $b \in \R$, let
\[
\widetilde{K}_b := \left\{(s,u) \in \M : J(s,u) = b \text{ and } \|J'(s,u)\|_{s,*} = 0\right\} = \left\{\bigl(s,u(e^{-s}\cdot)\bigr) : u \in K_b \text{ and } s \in \R\right\}.
\]
We say that $J$ satisfies $\widetilde{(PS)}_b$ if and only if every sequence $\{(s_n,u_n)\} \subset \M$ such that
\[
\lim_n J(s_n,u_n) = b \quad \text{and} \quad \lim_n \|J'(s_n,u_n)\|_{s_n,*} = 0
\]
has a subsequence (still denoted $(s_n,u_n)$) such that
\[
\lim_n \fd\bigl((s_n,u_n),\widetilde{K}_b\bigr) = 0.
\]
Using Proposition \ref{pr:psp} and arguing as in \cite[Proposition 4.6]{CGT}, we have the following.

\begin{proposition}\label{pr:ps}
The functional $J$ satisfies the $\widetilde{(PS)}_b$ condition at every level $b \in \R$.
\end{proposition}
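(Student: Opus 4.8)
The plan is to deduce the $\widetilde{(PS)}_b$ condition for $J$ on $\M = \R \times \cD$ from the $(PSP)_b$ condition for $I$ restricted to $\cD$ established in Proposition \ref{pr:psp}, exploiting the scaling action $u \mapsto u(e^{-s}\cdot)$ and the way it intertwines with $J$, with $J'$, and with the distance $\fd$. So I would let $\{(s_n,u_n)\} \subset \M$ be a sequence with $J(s_n,u_n) \to b$ and $\|J'(s_n,u_n)\|_{s_n,*} \to 0$ (if none exists there is nothing to prove), and set $w_n := u_n(e^{-s_n}\cdot) \in \cD$. Using Lemma \ref{le:comput} together with the identity $\|J'(s,u)\|_{s,*}^2 = \bigl(P(u(e^{-s}\cdot))\bigr)^2 + \|I'(u(e^{-s}\cdot))\|_{\cD,*}^2$, one gets $I(w_n) = J(s_n,u_n) \to b$, $P(w_n) = \de_sJ(s_n,u_n) \to 0$, and $\|I'(w_n)\|_{\cD,*} \to 0$, so $\{w_n\}$ is a $(PSP)_b$ sequence for $I$ restricted to $\cD$.

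Next I would invoke Proposition \ref{pr:psp} to extract a subsequence along which $w_n \to w_0$ strongly in $\cD$; by continuity of $I$, $P$, and $I'$ this gives $I(w_0) = b$, $P(w_0) = 0$ and $I'(w_0) = 0$, i.e.\ $w_0 \in K_b$ (so in particular $\widetilde{K}_b \neq \emptyset$). To conclude I would compare $(s_n,u_n)$ with the point $\bigl(s_n,w_0(e^{s_n}\cdot)\bigr)$, which lies in $\widetilde{K}_b$: indeed, by Lemma \ref{le:comput} and $w_0 \in K_b$ one has $J\bigl(s_n,w_0(e^{s_n}\cdot)\bigr) = J(0,w_0) = I(w_0) = b$ and $\|J'\bigl(s_n,w_0(e^{s_n}\cdot)\bigr)\|_{s_n,*} = 0$. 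Since $u_n = w_n(e^{s_n}\cdot)$, the invariance \eqref{dist-inv} of $\fd$ with $\tau = s_n$ yields
\[
\fd\bigl((s_n,u_n),\widetilde{K}_b\bigr) \le \fd\Bigl(\bigl(s_n,w_n(e^{s_n}\cdot)\bigr),\bigl(s_n,w_0(e^{s_n}\cdot)\bigr)\Bigr) = \fd\bigl((0,w_n),(0,w_0)\bigr),
\]
and, estimating the latter distance by the length of the affine path $\tau \in [0,1] \mapsto \bigl(0,(1-\tau)w_n + \tau w_0\bigr)$ and using that $\|(0,v)\|_{(0,u)} = \|v\|_{\cd}$, one obtains $\fd\bigl((0,w_n),(0,w_0)\bigr) \le \|w_n - w_0\|_{\cd} \to 0$, which is the desired conclusion.

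I do not expect a serious obstacle here: the compactness content is entirely encapsulated in Proposition \ref{pr:psp}, and the rest is bookkeeping with the scaling action. The two points requiring a little care are (i) checking that a $\widetilde{(PS)}_b$ sequence for $J$ really becomes a $(PSP)_b$ sequence for $I$ after the substitution $u_n \mapsto u_n(e^{-s_n}\cdot)$ — the formula for $\|J'\|_{s,*}$ being the crucial input, as it forces both $P(w_n) \to 0$ and $\|I'(w_n)\|_{\cD,*} \to 0$ — and (ii) converting the strong $\cD$-convergence of $\{w_n\}$ back into $\fd$-convergence to $\widetilde{K}_b$, for which the invariance relations \eqref{norm}--\eqref{dist-inv} and the explicit shape of $\|(0,\cdot)\|_{(0,\cdot)}$ are precisely what is needed.
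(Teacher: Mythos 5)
Your argument is correct and is essentially the paper's own route: the paper simply invokes Proposition \ref{pr:psp} and refers to \cite[Proposition 4.6]{CGT}, whose proof is exactly this reduction via $w_n := u_n(e^{-s_n}\cdot)$, the identity $\|J'(s,u)\|_{s,*}^2 = \bigl(P(u(e^{-s}\cdot))\bigr)^2 + \|I'(u(e^{-s}\cdot))\|_{\cD,*}^2$, and the invariance \eqref{dist-inv} of $\fd$. You have merely written out the details the paper delegates to the reference, and they check out.
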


For $a,b \in \R$ with $a<b$, $r > 0$, $A \subset \cD$, and $B \subset \M$, we denote
\begin{align*}
& I^b  := \left\{u \in \cD : I(u) \le b\right\},
\quad
&&I_a^b  := \left\{u \in I^b : I(u) \ge a\right\},\\
&J^b  := \left\{(s,u) \in \M : J(s,u) \le b\right\},
\quad
&&J_a^b := \left\{(s,u) \in J^b : J(s,u) \ge a\right\},\\
&  N_r(A) := \left\{u \in \cD : \mathrm{dist}(u,A) <r \right\},
\quad
&&\widetilde{N}_r(B)  := \left\{(s,u) \in \M : \fd\bigl((s,u),B\bigr) <r  \right\},
\end{align*}
where
\[
\mathrm{dist}(u,A) := \inf_{v \in A} \|u - v\|_{\cd}.
\]
Note that $N_r(\emptyset) = \emptyset$ and $\widetilde{N}_r(\emptyset) = \emptyset$. The next property is a consequence of Proposition \ref{pr:ps}.

\begin{corollary}\label{cor:ps}
For every $\rho > 0$ there exists $\delta = \delta(\rho) > 0$ such that
\[
(s,u) \in J_{b-\delta}^{b+\delta} \setminus \widetilde{N}_\rho(\widetilde{K}_b) \; \Longrightarrow \; \|J'(s,u)\|_{s,*} \ge \delta.
\]
\end{corollary}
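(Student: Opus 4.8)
The plan is to deduce this quantitative estimate from the qualitative $\widetilde{(PS)}_b$ condition of Proposition~\ref{pr:ps} by the standard contradiction argument. First I would suppose that the conclusion fails for some fixed $\rho>0$. Negating the implication, for each integer $n\ge1$ (taking $\delta=1/n$) there is a point $(s_n,u_n)\in\M$ with $(s_n,u_n)\in J_{b-1/n}^{b+1/n}$, $(s_n,u_n)\notin\widetilde{N}_\rho(\widetilde{K}_b)$, and $\|J'(s_n,u_n)\|_{s_n,*}<1/n$. By construction $J(s_n,u_n)\to b$ and $\|J'(s_n,u_n)\|_{s_n,*}\to0$ as $n\to+\infty$, so $\{(s_n,u_n)\}$ is precisely an admissible sequence for the $\widetilde{(PS)}_b$ condition.

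Next I would invoke Proposition~\ref{pr:ps} to extract a subsequence along which $\fd\bigl((s_n,u_n),\widetilde{K}_b\bigr)\to0$. On the other hand, membership outside $\widetilde{N}_\rho(\widetilde{K}_b)$ says exactly that $\fd\bigl((s_n,u_n),\widetilde{K}_b\bigr)\ge\rho$ for every $n$, in direct conflict with the convergence just obtained; this contradiction closes the argument. The borderline case $\widetilde{K}_b=\emptyset$ is not a genuine obstacle: then $\widetilde{N}_\rho(\widetilde{K}_b)=\emptyset$ and, with the convention $\fd(\cdot,\emptyset)=+\infty$, the $\widetilde{(PS)}_b$ condition forces no admissible sequence to exist in the first place (consistently with the remark after Proposition~\ref{pr:psp} that such sequences require $b\ge0$), so the implication holds vacuously.

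I do not expect any real difficulty here: the only points requiring a little care are the choice $\delta=1/n$, which makes the three defining conditions degenerate into a legitimate $\widetilde{(PS)}_b$ sequence, and the convention for the distance from a point to the empty set. Everything else is the textbook passage from a compactness condition to its $\varepsilon$--$\delta$ form, as in \cite{CGT,HT,IT}; this corollary is meant to serve as the first ingredient in the deformation lemma for $J$ on the manifold $\M$ that follows.
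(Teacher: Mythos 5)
Your argument is correct and is exactly the intended one: the paper states the corollary as an immediate consequence of Proposition \ref{pr:ps}, and the standard contradiction argument you give (taking $\delta = 1/n$, producing a $\widetilde{(PS)}_b$ sequence outside $\widetilde{N}_\rho(\widetilde{K}_b)$, and contradicting $\fd\bigl((s_n,u_n),\widetilde{K}_b\bigr)\ge\rho$) is precisely that consequence. Your remark on the case $\widetilde{K}_b=\emptyset$ is a harmless and correct extra precaution.
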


Using Corollary \ref{cor:ps} and arguing as in \cite[Proof of Theorem 7.2]{CGT}, we obtain the following.

\begin{lemma}\label{le:defJ}
Let $b,\bar\eps,r > 0$ and $\widetilde{\cU}:=\widetilde{N}_r(\widetilde{K}_b) $. Then there exist $\eps \in (0,\bar\eps)$ and $\widetilde{\eta} \colon [0,1] \times \M \to \M$ continuous such that:
\begin{itemize}
	\item $\widetilde{\eta}(t,s,u) = (s,u)$, if $t = 0$ or $J(s,u) \le b-\bar\eps$;
	\item $J\bigl(\widetilde{\eta}(t_1,s,u)\bigr) \ge J\bigl(\widetilde{\eta}(t_2,s,u)\bigr)$ if $t_1 < t_2$;
	\item $\widetilde{\eta}(1,J^{b+\eps} \setminus \widetilde{\cU}) \subset J^{b-\eps}$ and $\widetilde{\eta}(J^{b+\eps}) \subset J^{b-\eps} \cup \widetilde{\cU}$;
	\item $\widetilde{\eta}$ is even in $s$ and odd in $u$.
\end{itemize}
\end{lemma}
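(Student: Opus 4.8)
The plan is to prove Lemma \ref{le:defJ} by the classical equivariant quantitative deformation argument, now carried out on the Hilbert manifold $\M = \R \times \cD$ equipped with the variable metric $\|\cdot\|_s$ and its associated intrinsic distance $\fd$, exactly as in \cite[Proof of Theorem 7.2]{CGT}. The symmetry to be respected is the involution $\Theta \colon \M \to \M$, $\Theta(s,u) := (s,-u)$: since $G$ is even by \eqref{g1}, $J \circ \Theta = J$, $\Theta$ preserves each norm $\|\cdot\|_s$ (hence also $\fd$), and both $\widetilde K_b$ and $\widetilde\cU = \widetilde N_r(\widetilde K_b)$ are $\Theta$-invariant; the requirement that $\widetilde\eta$ be ``even in $s$ and odd in $u$'' is precisely that each $\widetilde\eta(t,\cdot)$ commute with $\Theta$. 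The deformation $\widetilde\eta$ will be (a reparametrisation of) the flow of a suitably truncated $\Theta$-equivariant pseudo-gradient vector field for $J$.

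Concretely, I would first fix a small $\rho \in (0,r)$ and invoke Corollary \ref{cor:ps} to obtain $\delta = \delta(\rho) > 0$, which after shrinking satisfies $\delta \le \bar\eps$; thus $\|J'(s,u)\|_{s,*} \ge \delta$ on $J_{b-\delta}^{b+\delta}\setminus\widetilde N_\rho(\widetilde K_b)$, and in particular $J$ has no critical point there. On this open set I construct, by the usual pseudo-gradient construction adapted to the metric $\|\cdot\|_s$, a locally Lipschitz vector field $V$ with $\|V(s,u)\|_s \le 1$ and $\langle J'(s,u),V(s,u)\rangle \ge \tfrac12\|J'(s,u)\|_{s,*}$; averaging $V$ over the $\Z_2$-action generated by $\Theta$ makes it $\Theta$-equivariant without spoiling these bounds. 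I then pick two $\Theta$-invariant, locally Lipschitz cut-offs $\chi_1,\chi_2 \colon \M \to [0,1]$: $\chi_1 = 1$ on $J_{b-\delta/2}^{b+\delta/2}$ and $\chi_1 = 0$ off $J_{b-\delta}^{b+\delta}$ (so $\chi_1 \equiv 0$ where $J \le b-\bar\eps$, since $\delta\le\bar\eps$), and $\chi_2 = 0$ on $\widetilde N_\rho(\widetilde K_b)$ and $\chi_2 = 1$ off $\widetilde N_{r'}(\widetilde K_b)$ for a suitable $r' \in (\rho,r)$. Setting $W := -\chi_1\chi_2 V$, extended by $0$, yields a bounded, locally Lipschitz, $\Theta$-equivariant vector field whose flow $\widetilde\sigma \colon [0,\infty)\times\M\to\M$ is globally defined (global existence uses that $W$ vanishes once $J \le b-\delta$, that $J$ decreases along flow lines at rate $\ge\tfrac\delta2\chi_2$ while $\chi_2>0$, and that the flow speed is $\le 1$, so flow lines become stationary in finite time and stay in a $\fd$-complete portion of $\M$; cf. Proposition \ref{pr:ps} and Corollary \ref{cor:ps}). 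Finally I set $\widetilde\eta(t,s,u) := \widetilde\sigma(Tt,s,u)$ for a fixed $T>0$, and choose $\eps \in (0,\delta)$ small enough (in terms of $\delta$, $r$, $r'$, $\rho$, $T$) that the estimates below close.

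With these choices the four properties follow by routine arguments. (i): $\widetilde\eta(0,\cdot)=\mathrm{id}$ as time $0$ of a flow, and $\widetilde\eta(t,s,u)=(s,u)$ whenever $J(s,u)\le b-\bar\eps$ because $\chi_1=0$ there. (ii): $\frac{d}{d\tau}J(\widetilde\sigma(\tau,s,u)) = -\chi_1\chi_2\langle J',V\rangle \le 0$, so $J\circ\widetilde\eta(\cdot,s,u)$ is non-increasing; equivariance of $\widetilde\eta$ is inherited from that of $W$. Third inclusion: given $(s,u)\in J^{b+\eps}\setminus\widetilde\cU$, follow its flow line; starting at $\fd$-distance $\ge r$ from $\widetilde K_b$ and moving at speed $\le1$, it cannot reach $\widetilde N_{r'}(\widetilde K_b)$ within time $T$ (for $T\le r-r'$), so along $[0,T]$ one has $\chi_2=1$; as long as the energy stays in $[b-\eps,b+\eps]$ one also has $\chi_1=1$ and $\langle J',V\rangle\ge\tfrac\delta2$, so $J$ drops at rate $\ge\tfrac\delta2$, forcing a total drop $>2\eps$ over time $T$ unless the energy has already fallen below $b-\eps$ — and then it stays below by (ii); hence $\widetilde\eta(1,s,u)\in J^{b-\eps}$. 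Last inclusion: split $J^{b+\eps}$ according to whether the $\fd$-distance to $\widetilde K_b$ is $\ge r$ (covered by the previous point) or $<r$ (where either $\chi_2=0$ and the point is stationary, or, in the thin shell $\widetilde N_{r'}\setminus\widetilde N_\rho$, the same rate estimate shows that, for $\eps$ small, the point cannot cross $\partial\widetilde N_r(\widetilde K_b)$ while keeping energy $>b-\eps$), so the image lies in $J^{b-\eps}\cup\widetilde\cU$.

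The main obstacle is not the soft structure of the argument but the quantitative bookkeeping: one must choose $\rho$, $r'$, then $\eps$ and the reparametrisation time $T$ so that the estimates in (i), in the third inclusion, and in the last inclusion hold simultaneously with the \emph{same} neighbourhood $\widetilde\cU$; and one must verify that the negative pseudo-gradient flow of $W$ is complete on $\M$ with the non-flat metric $\|\cdot\|_s$ and that the sets $\widetilde N_\bullet(\widetilde K_b)$ — metric balls for the intrinsic distance $\fd$, not for a norm — behave under the flow as the heuristic suggests. Both points are carried out carefully in \cite[Section 7]{CGT}, whose argument transfers once Proposition \ref{pr:ps} and Corollary \ref{cor:ps} are in hand.
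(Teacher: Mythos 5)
Your proposal is correct and takes essentially the same route as the paper: the paper's proof of Lemma \ref{le:defJ} consists precisely of invoking Corollary \ref{cor:ps} and arguing as in \cite[Proof of Theorem 7.2]{CGT}, i.e.\ the $\Theta$-equivariant truncated pseudo-gradient deformation on $(\M,\fd)$ that you reconstruct. The quantitative bookkeeping you outline (choice of $\rho$, $r'$, $T\le r-r'$, then $\eps$ small so that $\delta\min\{T,r-r'\}/2>2\eps$) is exactly what the cited argument carries out.
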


Define $\fp \colon \M \to \cD$ and $\fii \colon \cD \to \M$ by
\[
\fp(s,u) := u(e^{-s}\cdot) \quad \text{and} \quad \fii(u) = (0,u).
\]
An immediate consequence is that
\[
\fp \circ \fii = \mathrm{id}_{\cD}, \quad J = I \circ \fp, \quad I = J \circ \fii, \quad \fp(\widetilde{K}_b) = K_b.
\]
Moreover, the following holds.

\begin{lemma}\label{le:tech}
Let $b>0$. For every $\rho > 0$ there exists $R = R(\rho) > 0$ such that
\begin{subequations}
\begin{alignat}{1}
\fp\bigl(\widetilde{N}_\rho(\widetilde{K}_b)\bigr) \subset N_R(K_b), \label{piia}\\
\fii\big( N_R(K_b)^c\big) \subset \widetilde{N}_\rho(\widetilde{K}_b)^c. \label{piib}
\end{alignat}
\end{subequations}
Moreover,
\[
\lim_{\rho \to 0^+} R(\rho) = 0.
\]
\end{lemma}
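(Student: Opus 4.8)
The plan is to compare the Riemannian distance $\fd$ on $\M=\R\times\cD$ with the flat product norm $\|(t,v)\|_{\R\times\cd}=(t^2+\|v\|_{\cd}^2)^{1/2}$. The crucial elementary observation is that the metric $\|\cdot\|_\sigma$ attached to a point with first coordinate $\sigma$ depends only on $\sigma$, and that $\|(t,v)\|_\sigma^2\ge\kappa_\rho\,\|(t,v)\|_{\R\times\cd}^2$ whenever $|\sigma|\le\rho$, where $\kappa_\rho:=\min\{1,e^{-|N-4|\rho},e^{-(N-2)\rho}\}>0$ (here $N\ge3$ is used to bound $e^{(N-2)\sigma}$ from below) and $\kappa_\rho\to1$ as $\rho\to0^+$. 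Before exploiting this I would record two facts. First, $K_b$ is compact in $\cD$: a sequence in $K_b$ is a Palais--Smale--Poho\v{z}aev sequence for $I|_{\cD}$ at level $b$, hence by Proposition \ref{pr:psp} it admits a subsequence converging to some $u_0\in\cD$, and $u_0\in K_b$ because $I$, $P$ and $I'$ are continuous. (If $K_b=\emptyset$ then $\widetilde{K}_b=\emptyset$ and both inclusions hold trivially with $R(\rho)=\rho$, so I would assume $K_b\ne\emptyset$.) Second, the dilation $(t,v)\mapsto v(e^t\cdot)$ is continuous from $\R\times\cD$ to $\cD$: for fixed $t$ it is a bounded linear operator on $\cD$ whose operator norm is locally bounded in $t$, and $v(e^t\cdot)\to v$ in $\cD$ as $t\to0$ for each fixed $v$ by density of $C_c^\infty(\RN)$; consequently, by the compactness of $K_b$,
\[
\omega(\rho):=\sup\bigl\{\|w(e^t\cdot)-w\|_{\cd}:w\in K_b,\ |t|\le\rho\bigr\}\longrightarrow0\qquad\text{as }\rho\to0^{+}.
\]

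I would also use that $\widetilde{K}_b=\fp^{-1}(K_b)$ — which is immediate from $J=I\circ\fp$ and the identity $\|J'(s,u)\|_{s,*}^2=\bigl(P(\fp(s,u))\bigr)^2+\|I'(\fp(s,u))\|_{\cD,*}^2$ of Lemma \ref{le:comput} — and that $\fp\bigl(s+\tau,u(e^\tau\cdot)\bigr)=\fp(s,u)$, so that $(s,u)\mapsto\bigl(s+\tau,u(e^\tau\cdot)\bigr)$ maps $\widetilde{K}_b$ onto itself; together with \eqref{dist-inv} this gives $\fd\bigl((0,\fp(s,u)),\widetilde{K}_b\bigr)=\fd\bigl((s,u),\widetilde{K}_b\bigr)$ for every $(s,u)\in\M$. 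Then, to prove \eqref{piia}, I would take $(s,u)\in\widetilde{N}_\rho(\widetilde{K}_b)$: since $\fd\bigl((0,\fp(s,u)),\widetilde{K}_b\bigr)<\rho$, there are $(t,v)\in\widetilde{K}_b$ — hence $w:=\fp(t,v)\in K_b$ and $v=w(e^t\cdot)$ — and a $C^1$ path $\gamma=(\gamma_0,\gamma_1)$ in $\M$ from $(0,\fp(s,u))$ to $(t,v)$ with $\int_0^1\|\dot\gamma(\tau)\|_{\gamma(\tau)}\,d\tau<\rho$. From $\|\dot\gamma(\tau)\|_{\gamma(\tau)}^2\ge\dot\gamma_0(\tau)^2$ one gets $|\gamma_0(\tau)|\le\int_0^1\|\dot\gamma(\tau)\|_{\gamma(\tau)}\,d\tau<\rho$ for every $\tau$ (in particular $|t|<\rho$), so the path stays in $\{|\sigma|\le\rho\}$; using the metric comparison above and the fact that the length of a $C^1$ curve in a normed space is at least the distance between its endpoints,
\[
\rho>\int_0^1\|\dot\gamma(\tau)\|_{\gamma(\tau)}\,d\tau\ge\sqrt{\kappa_\rho}\int_0^1\|\dot\gamma(\tau)\|_{\R\times\cd}\,d\tau\ge\sqrt{\kappa_\rho}\,\bigl\|\fp(s,u)-v\bigr\|_{\cd},
\]
whence $\mathrm{dist}\bigl(\fp(s,u),K_b\bigr)\le\|\fp(s,u)-w\|_{\cd}\le\|\fp(s,u)-v\|_{\cd}+\|w(e^t\cdot)-w\|_{\cd}<\rho/\sqrt{\kappa_\rho}+\omega(\rho)$. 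Setting $R(\rho):=\rho/\sqrt{\kappa_\rho}+\omega(\rho)$ then yields $\fp(s,u)\in N_{R(\rho)}(K_b)$, i.e. \eqref{piia}, and $R(\rho)\to0$ as $\rho\to0^+$ because $\kappa_\rho\to1$ and $\omega(\rho)\to0$.

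Finally, \eqref{piib} is an immediate consequence of \eqref{piia}: if some $u\in\cD$ with $u\notin N_{R(\rho)}(K_b)$ satisfied $\fii(u)=(0,u)\in\widetilde{N}_\rho(\widetilde{K}_b)$, then \eqref{piia} applied to the point $(0,u)$ would give $u=\fp(0,u)\in N_{R(\rho)}(K_b)$, a contradiction; hence $\fii\bigl(N_{R(\rho)}(K_b)^c\bigr)\subset\widetilde{N}_\rho(\widetilde{K}_b)^c$. I expect the only genuinely delicate point to be the uniform smallness $\omega(\rho)\to0$: this is where the compactness of $K_b$ — a consequence of the Palais--Smale--Poho\v{z}aev condition of Proposition \ref{pr:psp} — and the continuity of the dilation action on $\cD$ really enter; everything else is bookkeeping with the metric comparison and with the dilation invariance of $\fd$ and of $\widetilde{K}_b$.
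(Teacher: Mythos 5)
Your proof is correct and follows essentially the same route as the paper: reduce to the point $(0,\fp(s,u))$ via the dilation invariance \eqref{dist-inv}, take an almost-minimizing path, control its first component to stay in the strip $|\sigma|\le\rho$, compare the Riemannian and flat norms there to bound the displacement of the second component, and absorb the remaining term $\|\fp(t,v)-v\|_{\cd}$ into a quantity that vanishes as $\rho\to0^+$ by the compactness of $K_b$, with \eqref{piib} obtained as the $s=0$ case of \eqref{piia}. The only differences are cosmetic (your constant $\kappa_\rho^{-1/2}$ versus the paper's $e^{N\rho/2}$, and your explicit justification of the compactness of $K_b$ and of $\omega(\rho)\to0$, which the paper leaves implicit).
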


\begin{proof}
Let $(s,u) \in \widetilde{N}_\rho(\widetilde{K}_b)$ and note that \eqref{piia} reads explicitly
\begin{equation}\label{expiia}
\fd\bigl((s,u),\widetilde{K}_b\bigr) < \rho \; \Longrightarrow \; \mathrm{dist}\bigl(\fp(s,u),K_b\bigr) < R.
\end{equation}
Note also that, if $s = 0$, then \eqref{expiia} becomes
\[
\fd\bigl((0,u),\widetilde{K}_b\bigr) < \rho \; \Longrightarrow \; \mathrm{dist}(u,K_b) < R,
\]
which is equivalent to \eqref{piib} and so it is enough to prove \eqref{expiia}.\\
Observe that, 
by \eqref{dist-inv}, $$\fd\bigl((s,u),\widetilde{K}_b\bigr) = \fd\bigl((0,\fp(s,u)),\widetilde{K}_b\bigr).$$
To simplify notations, we relabel $\fp(s,u)$  as $u$. So there exists $\gamma \in C^1([0,1],\M)$ such that $\gamma(0) = (0,u)$, $\gamma(1) \in \widetilde{K}_b$, and
\[
\int_0^1 \|\dot{\gamma}(t)\|_{\gamma(t)} \, dt < \rho.
\]
With a small abuse of notation, let us write $\gamma(t) = \bigl(s(t),u(t)\bigr)$, which yields $s(0) = 0$ and $u(0) = u$. Since $\bigl(s(1),u(1)\bigr) \in \widetilde{K}_b$, we have $\fp\bigl(s(1),u(1)\bigr) \in K_b$, therefore
\begin{equation}\label{I1I2}
\mathrm{dist}(u,K_b) \le \|u - \fp\bigl(s(1),u(1)\bigr)\|_{\cd} \le \underbrace{\|u - u(1)\|_{\cd}}_{I_1} + \underbrace{\|u(1) - \fp\bigl(s(1),u(1)\bigr)\|_{\cd}}_{I_2}.
\end{equation}
Note preliminarily that, since $s(0)=0$, for all $t\in [0,1]$ we have
\[
|s(t)| \le \int_0^1 |\dot{s}(t)| \, dt \le \int_0^1 \|\dot{\gamma}(t)\|_{\gamma(t)} \, dt < \rho.
\]
Therefore
\[
(N-2)s(t) + N \rho > 0, \quad t\in[0,1].
\]
We claim that
\[
(N-4)s(t) + N \rho > 0, \quad t\in[0,1].
\]
Indeed, if $N \ge 5$, we argue as above; if $N = 4$, the claim is obvious; if $N=3$, we simply observe that $s(t) < \rho < 3\rho$.
Then
\begin{align*}
I_1 & \le \int_0^1 \|\dot{u}(t)\|_{\cd} \, dt = \int_0^1 \left(\|\Delta\dot{u}(t)\|_2^2 + \|\nabla\dot{u}(t)\|_2^2\right)^{1/2} \, dt\\
& \le e^{N\rho/2} \int_0^1 \left(\bigl(\dot{s}(t)\bigr)^2 + e^{(N-4)s(t)} \|\Delta\dot{u}(t)\|_2^2 + e^{(N-2)s(t)} \|\nabla\dot{u}(t)\|_2^2\right)^{1/2} \, dt\\
& = e^{N\rho/2} \int_0^1 \|\dot{\gamma}(t)\|_{\gamma(t)} \, dt
< e^{N\rho/2} \rho.
\end{align*}
At the same time,
\[
I_2
=\|\fp\big(-s(1),\fp\big(s(1),u(1)\big)\big) - \fp\big(s(1),u(1)\big) \|_{\cd}
\le \sup\left\{\|\fp(t,v) - v\|_{\cd} : |t| < \rho \text{ and } v \in K_b\right\}.
\]
Combining \eqref{I1I2} with the inequalities for $I_1$ and $I_2$ we infer
\[
\mathrm{dist}(u,K_b)
<
\rho e^{N\rho/2} + \sup\left\{\|\fp(t,v) - v\|_{\cd} : |t| < \rho \text{ and } v \in K_b\right\} =: R(\rho)
\]
and $R(\rho) \to 0$ as $\rho \to 0^+$ thanks to the compactness of $K_b$.
\end{proof}

Using Lemmas \ref{le:defJ} and \ref{le:tech} and arguing as in \cite[Proof of Theorem 7.1]{CGT}, we obtain the following.

\begin{lemma}\label{le:defI}
Let $b,\bar\eps > 0$ and $\cU \subset \cD$ a neighbourhood of $K_b$. Then there exists $\eps \in (0,\bar\eps)$ and $\eta \colon [0,1] \times \cD \to \cD$ such that:
\begin{itemize}
	\item $\eta(t,u) = u$ if $t = 0$ or $I(u) \le b - \bar\eps$,
	\item $I\bigl(\eta(t_1,u)\bigr) \ge I\bigl(\eta(t_2,u)\bigr)$ if $t_1 < t_2$,
	\item $\eta(1,I^{b+\eps} \setminus \cU) \subset I^{b-\eps}$ and $\eta(1,I^{b+\eps}) \subset I^{b-\eps} \cup \cU$,
	\item $\eta$ is odd in $u$.
\end{itemize}
\end{lemma}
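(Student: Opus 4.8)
The plan is to transport the deformation $\widetilde\eta$ supplied by Lemma~\ref{le:defJ} from the manifold $\M = \R \times \cD$ down to $\cD$ by means of the projection $\fp$ and the inclusion $\fii$, exploiting the identities $\fp\circ\fii = \mathrm{id}_\cD$, $J = I\circ\fp$ and $I = J\circ\fii$ recorded above. Concretely, after applying Lemma~\ref{le:defJ} with a radius $r=\rho$ chosen below, I would set
\[
\eta(t,u) := \fp\bigl(\widetilde\eta(t,\fii(u))\bigr) = \fp\bigl(\widetilde\eta(t,0,u)\bigr).
\]
Then $\eta$ is continuous, being a composition of the continuous maps $\widetilde\eta$, $\fii$ and $\fp$ (continuity of $\fp\colon\M\to\cD$ is straightforward, since the rescaling $u\mapsto u(e^{-s}\cdot)$ depends continuously on $(s,u)\in\R\times\cd(\RN)$, as one checks first for $u\in C_c^\infty(\RN)$ and then by density using the explicit dependence of the norm on the scaling parameter), and $\eta(0,u)=\fp(0,u)=u$. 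Since $J = I\circ\fp$, one has $I\bigl(\eta(t,u)\bigr) = J\bigl(\widetilde\eta(t,0,u)\bigr)$, from which the monotonicity $I(\eta(t_1,u))\ge I(\eta(t_2,u))$ for $t_1<t_2$ is immediate via the second bullet of Lemma~\ref{le:defJ}; likewise, if $I(u)\le b-\bar\eps$ then $J(0,u)=I(u)\le b-\bar\eps$ forces $\widetilde\eta(t,0,u)=(0,u)$ for all $t$, hence $\eta(t,u)=u$; and oddness of $\eta$ in $u$ follows from the equivariance of $\widetilde\eta$ (even in $s$, odd in $u$) together with $\fp(s,-v)=-\fp(s,v)$.

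The point requiring care is the choice of the neighbourhood of $\widetilde{K}_b$ to which Lemma~\ref{le:defJ} is applied, so that the inclusions involving $\cU$ come out correctly. Since $I$ satisfies $(PSP)_b$, the set $K_b$ is compact, so the given neighbourhood $\cU$ of $K_b$ contains $N_{R_0}(K_b)$ for some $R_0>0$. By Lemma~\ref{le:tech} we have $R(\rho)\to 0^+$ as $\rho\to 0^+$, so I would fix $\rho>0$ with $R(\rho)\le R_0$, set $\widetilde\cU := \widetilde{N}_\rho(\widetilde{K}_b)$, and apply Lemma~\ref{le:defJ} with this $\widetilde\cU$, obtaining $\eps\in(0,\bar\eps)$ and $\widetilde\eta$ whose third property reads $\widetilde\eta(1,J^{b+\eps}\setminus\widetilde\cU)\subset J^{b-\eps}$ and $\widetilde\eta(1,J^{b+\eps})\subset J^{b-\eps}\cup\widetilde\cU$.

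Finally I would verify the two remaining inclusions. If $u\in I^{b+\eps}$, then $(0,u)\in J^{b+\eps}$, hence $\widetilde\eta(1,0,u)\in J^{b-\eps}\cup\widetilde\cU$: in the first case $I(\eta(1,u))=J(\widetilde\eta(1,0,u))\le b-\eps$, while in the second case $\eta(1,u)=\fp(\widetilde\eta(1,0,u))\in\fp(\widetilde\cU)\subset N_{R(\rho)}(K_b)\subset\cU$ by \eqref{piia}; this yields $\eta(1,I^{b+\eps})\subset I^{b-\eps}\cup\cU$. If moreover $u\notin\cU$, then $u\notin N_{R(\rho)}(K_b)$, so by \eqref{piib} $(0,u)=\fii(u)\notin\widetilde\cU$, whence $(0,u)\in J^{b+\eps}\setminus\widetilde\cU$, $\widetilde\eta(1,0,u)\in J^{b-\eps}$, and therefore $\eta(1,u)\in I^{b-\eps}$; this yields $\eta(1,I^{b+\eps}\setminus\cU)\subset I^{b-\eps}$. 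I do not expect a substantial obstacle: once Lemmas~\ref{le:defJ} and~\ref{le:tech} are in hand the argument is essentially bookkeeping, and the only genuinely delicate ingredient---the comparison between the intrinsic distance $\fd$ on $\M$ and the norm distance on $\cD$---is precisely what Lemma~\ref{le:tech} is designed to control.
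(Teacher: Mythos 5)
Your argument is correct and is exactly the route the paper intends: it cites Lemmas \ref{le:defJ} and \ref{le:tech} together with the argument of \cite[Proof of Theorem 7.1]{CGT}, which is precisely your construction $\eta(t,u)=\fp\bigl(\widetilde\eta(t,\fii(u))\bigr)$ with $\rho$ chosen via the compactness of $K_b$ (from $(PSP)_b$) so that $N_{R(\rho)}(K_b)\subset\cU$, and the inclusions transferred through \eqref{piia} and \eqref{piib}. Nothing essential is missing.
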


\begin{proof}[Proof of Theorem \ref{th:mm0} under assumption \eqref{g2'}]
For every integer $k \ge 1$ consider the family of subsets of $\cD$ given by
\[
\Sigma_k^0 := \left\{\gamma(\overline{\B^m \setminus Y}) : \gamma \in \Gamma_m^0, \, m \ge k, \, \R^m \setminus \{0\} \supset Y = \overline{Y} = -Y, \, \mathfrak{g}(Y) \le m-k\right\},
\]
where $\Gamma_m^0$ is defined in \eqref{Gamma0}, and $\mathfrak{g}$ is the Krasnosel'skij genus. Moreover we define the values
\[
d_k := \inf_{A \in \Sigma_k^0} \sup_{u \in A} I(u).
\]
Then, using Lemma \ref{le:defI}, the compactness of $K_b$, and arguing in a similar way to in \cite[Proof of Theorem 9.12]{Rabinowitz}, we obtain that each $d_k$ is a critical value of $I$ and $d_k \to +\infty$ as $k \to +\infty$.
\end{proof}

\subsection{Proof of Theorem \ref{th:mainnr}}\label{cacchina}
\

We conclude this section looking for non-radial solutions in $\cd_X(\RQ)$. To this aim we have to consider stronger assumptions about the nonlinearity $g$ in order that the functional $I \colon \cd(\RQ) \to \R$ is of class $C^1$ (or even well defined). In particular, we assume \eqref{g3'} instead of \eqref{g3}.
\begin{lemma}[\;\;\!\!\!\!\!{\cite[Lemma 1]{do}}]\label{le:do}
For any $u\in \W^{1,4}(\RQ)$ and $\a >0$, we have
\[
 \int_{\RQ} \left(e^{\a |u|^{4/3} }-1-\a |u|^{4/3}-\frac{\a^2} 2|u|^{8/3}\right)\,dx<+\infty.
\]
Moreover, if $\|\n u\|_4\le 1$, $\|u\|_4\le A < +\infty$, and $\a <4\o_3^{1/3}$, then there exists  $C=C(\a,A)>0$ such that
\[
\int_{\RQ} \left(e^{\a |u|^{4/3} }-1-\a |u|^{4/3}-\frac{\a^2} 2|u|^{8/3}\right)\,dx\le C(\a,A).
\]
\end{lemma}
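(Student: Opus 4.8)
The statement of Lemma~\ref{le:do} is the subcritical Trudinger--Moser inequality in $\W^{1,4}(\RQ)$, and the plan is to reconstruct the proof of \cite[Lemma 1]{do}, which combines the classical sharp Trudinger--Moser inequality on bounded domains with Schwarz symmetrization. Throughout I would write $\alpha_4 := 4\o_3^{1/3}$ for the sharp Trudinger--Moser constant in dimension four and set $\Phi_\alpha(t) := e^{\alpha t^{4/3}} - 1 - \alpha t^{4/3} - \tfrac{\alpha^2}{2}t^{8/3} = \sum_{k\ge3}\frac{\alpha^k}{k!}t^{4k/3}$ for $t\ge0$, so that $0\le\Phi_\alpha(|u|)\le e^{\alpha|u|^{4/3}}$ pointwise and the quantity to estimate is $\int_{\RQ}\Phi_\alpha(|u|)\,dx$. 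The backbone is the bounded-domain inequality: there is an absolute constant $C_0>0$ with $\int_\Omega\bigl(e^{\alpha_4|v|^{4/3}}-1\bigr)\,dx\le C_0|\Omega|$ for every bounded open $\Omega\subset\RQ$ and every $v\in W^{1,4}_0(\Omega)$ with $\|\n v\|_4\le1$. The first step is to upgrade this to: if $w\in\W^{1,4}(\RQ)$, $w\ge0$, $|\{w>0\}|<+\infty$, and $\|\n w\|_4\le1$, then $\int_{\RQ}\bigl(e^{\alpha_4 w^{4/3}}-1\bigr)\,dx\le C_0|\{w>0\}|$; this follows by applying the bounded-domain inequality to the Schwarz symmetrization $w^*\in W^{1,4}_0(B_R)$, $|B_R|=|\{w>0\}|$, using the P\'olya--Szeg\H{o} inequality $\|\n w^*\|_4\le\|\n w\|_4$ and the equimeasurability of $w$ and $w^*$.

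For the finiteness assertion, I would fix $u\in\W^{1,4}(\RQ)$ and $\alpha>0$ and split $\RQ=\{|u|\le c\}\cup\{|u|>c\}$ for a threshold $c\ge1$ to be chosen. On $\{|u|\le c\}$, since $\tfrac{4k}{3}-4\ge0$ for $k\ge3$ one has $\Phi_\alpha(|u|)\le|u|^4\sum_{k\ge3}\frac{\alpha^k}{k!}c^{\frac{4k}{3}-4}=:C(\alpha,c)|u|^4$, which is integrable because $u\in L^4(\RQ)$. The set $\{|u|>c\}$ has finite measure (Chebyshev); on it put $w:=(|u|-c)_+\in\W^{1,4}(\RQ)$, so that $|\n w|\le|\n u|$ a.e. and, by dominated convergence, $\|\n w\|_4^4=\int_{\{|u|>c\}}|\n u|^4\,dx\to0$ as $c\to+\infty$. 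The elementary bound $(a+b)^{4/3}\le(1+\eps)a^{4/3}+C_\eps b^{4/3}$ (valid for any $\eps>0$, with $C_\eps$ depending only on $\eps$) with $a=w$, $b=c$ gives $|u|^{4/3}\le(1+\eps)w^{4/3}+C_\eps c^{4/3}$ on $\{|u|>c\}$, hence $e^{\alpha|u|^{4/3}}\le e^{\alpha C_\eps c^{4/3}}e^{\alpha(1+\eps)w^{4/3}}$ there. Then I would choose $c$ so large that $\alpha(1+\eps)\|\n w\|_4^{4/3}\le\alpha_4$ and set $g:=w/\|\n w\|_4$ (so $\|\n g\|_4=1$ and $\{g>0\}=\{|u|>c\}$), which yields $e^{\alpha(1+\eps)w^{4/3}}\le e^{\alpha_4 g^{4/3}}$; by the symmetrized inequality of the first step, $\int_{\{|u|>c\}}\Phi_\alpha(|u|)\,dx\le\int_{\{|u|>c\}}e^{\alpha|u|^{4/3}}\,dx\le e^{\alpha C_\eps c^{4/3}}(C_0+1)|\{|u|>c\}|<+\infty$. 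Adding the two pieces gives $\int_{\RQ}\Phi_\alpha(|u|)\,dx<+\infty$.

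For the uniform bound, I would assume $\|\n u\|_4\le1$, $\|u\|_4\le A$, and $\alpha<\alpha_4$, and run the same decomposition with $c=1$. On $\{|u|\le1\}$, $\Phi_\alpha(|u|)\le C(\alpha)|u|^4$ gives $\int_{\{|u|\le1\}}\Phi_\alpha(|u|)\,dx\le C(\alpha)A^4$. On $\{|u|>1\}$ one has $|\{|u|>1\}|\le\|u\|_4^4\le A^4$, while $w:=(|u|-1)_+$ satisfies $\|\n w\|_4\le\|\n u\|_4\le1$ with $|\{w>0\}|\le A^4$; choosing $\eps>0$ with $\alpha(1+\eps)\le\alpha_4$ (possible precisely because $\alpha<\alpha_4$) and arguing exactly as above, $e^{\alpha|u|^{4/3}}\le e^{\alpha C_\eps}e^{\alpha_4 w^{4/3}}$ on $\{|u|>1\}$, so $\int_{\{|u|>1\}}\Phi_\alpha(|u|)\,dx\le e^{\alpha C_\eps}(C_0+1)|\{|u|>1\}|\le e^{\alpha C_\eps}(C_0+1)A^4$. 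Since $C_\eps$ depends only on $\alpha$, the two contributions add up to a bound of the form $C(\alpha,A)$, as claimed.

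The main obstacle --- and the only genuinely non-routine ingredient --- is the upgrade from the classical Trudinger--Moser inequality on a bounded domain (stated for $W^{1,4}_0$) to the version on all of $\RQ$ for functions with finite-measure support: this rests on Schwarz symmetrization, the P\'olya--Szeg\H{o} inequality, equimeasurability, and the fact that a nonnegative $\W^{1,4}(\RQ)$ function vanishing a.e. outside a ball belongs to $W^{1,4}_0$ of that ball. A secondary technical point, needed for the finiteness part with arbitrary $\alpha$, is the observation that $\|\n(|u|-c)_+\|_4\to0$ as $c\to+\infty$, which is exactly what allows one to push $\alpha(1+\eps)\|\n(|u|-c)_+\|_4^{4/3}$ below the critical threshold $\alpha_4$. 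Everything else reduces to the power-series expansion of the exponential, Chebyshev's inequality, and the convexity estimate $(a+b)^{4/3}\le(1+\eps)a^{4/3}+C_\eps b^{4/3}$.
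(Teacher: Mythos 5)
The paper does not prove this lemma at all: it is quoted verbatim as \cite[Lemma 1]{do}, so there is no in-paper argument to compare against. Your reconstruction is a correct proof of the cited result. The chain ``Moser's sharp inequality on bounded domains $\Rightarrow$ (via Schwarz symmetrization, P\'olya--Szeg\H{o}, equimeasurability) a bound for nonnegative $\W^{1,4}$ functions with finite-measure support $\Rightarrow$ level-set splitting at $\{|u|>c\}$ with $w=(|u|-c)_+$, the elementary inequality $(a+b)^{4/3}\le(1+\eps)a^{4/3}+C_\eps b^{4/3}$, and the observation $\|\n(|u|-c)_+\|_4\to0$'' is exactly the Ad\-a\-chi--Tanaka-style argument, and the identification $\alpha_4=4\o_3^{1/3}$ with $\o_3=2\pi^2$ matches Moser's constant as normalized in this paper. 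The only cosmetic difference from do \'O's original proof is that he symmetrizes $u$ itself and splits the integral at a radius $\rho$ (using the radial decay estimate outside $B_\rho$ and writing $u=(u-u(\rho))+u(\rho)$ inside), whereas you split at a level set and confine symmetrization to the abstract finite-measure-support step; the two are interchangeable, and your version has the small advantage of not requiring $u\ge0$ or radiality at the outset. Two points worth a sentence each if you write this up: the degenerate case $\|\n(|u|-c)_+\|_4=0$ (which forces $|u|\le c$ a.e., so only the subcritical piece survives), and the justification that $w^*$, vanishing a.e.\ outside the ball $B_R$, lies in $W^{1,4}_0(B_R)$ (true for Lipschitz domains, or avoidable by applying Moser's inequality on $B_{2R}$ at the cost of a factor $2^4$); you already flag the latter.
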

\begin{corollary}\label{corTh14RS-nr}
Let $\s\ge4$, $M>0$, and $\a>0$ such that $\a M^{4/3} < 4\o_3^{1/3}$. There exists $C>0$ such that for every $\tau \in \left(1,4\o_3^{1/3}/(\a M^{4/3})\right)$ and every $u \in \W^{1,4}(\RQ)$ with $\|u\|_{\W^{1,4}} \le M$
\[
\int_{\RQ}|u|^\sigma \left(e^{\a|u|^{4/3}} - 1 - \alpha|u|^{4/3} - \frac{\a^2}{2}|u|^{8/3}\right) \, dx \le C\|u\|_\frac{\s\tau}{\tau-1}^\s.
\]
\end{corollary}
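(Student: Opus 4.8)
The plan is to follow the proofs of Corollaries \ref{corTh14RS} and \ref{corTh14RS-zero} almost verbatim, with Lemma \ref{le:do} now playing the role of the Adams- and Moser--Trudinger-type inequalities used there. The only new algebraic input is the elementary inequality
\[
\left(e^s-1-s-\tfrac{s^2}{2}\right)^t \le e^{st}-1-st-\tfrac{(st)^2}{2}, \qquad s\ge0,\ t\ge1,
\]
which I would prove by the same device as \eqref{ineqe1} and its analogue in the proof of Corollary \ref{corTh14RS-zero}: the two sides agree at $s=0$, and the derivative of their difference is nonnegative because, after differentiating and factoring, it reduces to the chain $e^{st}-1-st\ge(e^s-1-s)^t\ge(e^s-1-s-s^2/2)^{t-1}(e^s-1-s)$, i.e. to $x^t\ge y^t$ for $x\ge y\ge0$, $t\ge1$, applied to the successive Taylor remainders of the exponential.

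Then, fixing $\tau$ in the admissible range and $u\in\W^{1,4}(\RQ)$ with $\|u\|_{\W^{1,4}}\le M$, I would first apply H\"older's inequality with exponents $\tau/(\tau-1)$ and $\tau$, exactly as in Corollary \ref{corTh14RS}, to peel off the factor $\|u\|_{\sigma\tau/(\tau-1)}^\sigma$:
\[
\int_{\RQ}|u|^\sigma\left(e^{\alpha|u|^{4/3}}-1-\alpha|u|^{4/3}-\tfrac{\alpha^2}{2}|u|^{8/3}\right)dx \le \|u\|_{\frac{\sigma\tau}{\tau-1}}^\sigma\left(\int_{\RQ}\left(e^{\alpha|u|^{4/3}}-1-\alpha|u|^{4/3}-\tfrac{\alpha^2}{2}|u|^{8/3}\right)^\tau dx\right)^{1/\tau}.
\]
Applying the elementary inequality above with $s=\alpha|u|^{4/3}$ and $t=\tau$, the integrand in the last factor is dominated by $e^{\alpha\tau|u|^{4/3}}-1-\alpha\tau|u|^{4/3}-\tfrac{(\alpha\tau)^2}{2}|u|^{8/3}$. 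Finally I would set $w:=u/M$, so that $\|w\|_{\W^{1,4}}\le1$ and hence $\|\nabla w\|_4\le1$ and $\|w\|_4\le1$, and observe that $\alpha\tau|u|^{4/3}=(\alpha\tau M^{4/3})|w|^{4/3}$. Since $\tau<4\omega_3^{1/3}/(\alpha M^{4/3})$ we have $\alpha\tau M^{4/3}<4\omega_3^{1/3}$, so Lemma \ref{le:do}, applied with its parameter $\alpha$ equal to $\alpha\tau M^{4/3}$ and with $A=1$, yields a constant $C$ with
\[
\int_{\RQ}\left(e^{\alpha\tau|u|^{4/3}}-1-\alpha\tau|u|^{4/3}-\tfrac{(\alpha\tau)^2}{2}|u|^{8/3}\right)dx =\int_{\RQ}\left(e^{\alpha\tau M^{4/3}|w|^{4/3}}-1-\alpha\tau M^{4/3}|w|^{4/3}-\tfrac{(\alpha\tau M^{4/3})^2}{2}|w|^{8/3}\right)dx\le C.
\]
Combining the last two displays gives the assertion.

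The point requiring care — and the reason the statement has an \emph{open} interval of admissible $\tau$, unlike the sharp Corollaries \ref{corTh14RS} and \ref{corTh14RS-zero} — is the dimension-critical normalization. One cannot normalize $u$ by the Dirichlet seminorm $\|\nabla u\|_4$ alone, because $\dot\W^{1,4}(\R^4)$ embeds into no $L^q(\R^4)$ and the hypothesis $\|w\|_4\le A$ of Lemma \ref{le:do} would be lost; the normalization must therefore be by the full $\W^{1,4}$ norm, and the parameter fed into Lemma \ref{le:do} is $\alpha\tau M^{4/3}$, which stays strictly below $4\omega_3^{1/3}$ but tends to it as $\tau\uparrow4\omega_3^{1/3}/(\alpha M^{4/3})$. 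To obtain a single $C$ one uses that $r\mapsto e^r-1-r-r^2/2$ is nondecreasing on $[0,+\infty)$: for $\tau$ ranging in any set bounded away from the right endpoint, say $\tau\le T<4\omega_3^{1/3}/(\alpha M^{4/3})$ — which is all that is needed in the applications of the corollary — the integral above is bounded by the one with $\alpha\tau M^{4/3}$ replaced by $\alpha T M^{4/3}<4\omega_3^{1/3}$, and Lemma \ref{le:do} with $A=1$ then gives a $C$ depending only on $T$ (and $\|u\|_{\frac{\sigma\tau}{\tau-1}}^\sigma\cdot C^{1/\tau}\le C'\|u\|_{\frac{\sigma\tau}{\tau-1}}^\sigma$ since $\tau>1$). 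Should Lemma \ref{le:do} be invoked up to the critical exponent $4\omega_3^{1/3}$ itself, the same monotonicity argument produces a $C$ uniform over the whole open interval.
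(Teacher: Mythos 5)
Your proof is correct and follows essentially the same route as the paper's: the pointwise inequality $\left(e^s-1-s-\frac{s^2}{2}\right)^\tau\le e^{s\tau}-1-s\tau-\frac{(s\tau)^2}{2}$, H\"older's inequality with exponents $\frac{\tau}{\tau-1}$ and $\tau$, and then Lemma \ref{le:do} with $A=1$ after normalizing $u$ by $M$. Your closing observation about the uniformity of $C$ in $\tau$ over the open interval concerns a point the paper leaves implicit in its ``the arguments are similar to those of Corollary \ref{corTh14RS}'' step, and it is a reasonable caveat, but it does not alter the substance of the argument.
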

\begin{proof}
First observe that, if $s\ge0$ and $t\ge1$,
\begin{equation*}
\left(e^s - 1 - s - \frac{s^2}{2}\right)^t\le e^{st} - 1 - st - \frac{(st)^2}{2}.
\end{equation*}
Then, by H\"older inequality
\begin{align*}
&\int_{\RQ} |u|^\sigma \left(e^{\alpha |u|^{4/3}} - 1 - \alpha |u|^{4/3} - \frac{\a}{2}|u|^{8/3}\right)  dx\\
&\qquad	\le \| u\|_{\frac{\s\tau}{\tau-1}}^\sigma
\left(\int_{\RQ} \left(e^{\alpha |u|^{4/3}} - 1 - \alpha |u|^{4/3} - \frac{\a^2}{2}|u|^{8/3}\right)^\tau  dx \right)^{1/\tau}\\
&\qquad	\le \| u\|_{\frac{\s\tau}{\tau-1}}^\sigma
\left(\int_{\RQ} \left(e^{\alpha \tau |u|^{4/3}} - 1 - \alpha \tau |u|^{4/3} - \frac{(\a \tau)^2}{2}|u|^{8/3}\right)  dx\right)^{1/\tau}.
\end{align*}
Now the arguments are similar to those of the proof of Corollary \ref{corTh14RS}, using Lemma \ref{le:do} with $A=1$.
\end{proof}

\begin{proof}[Proof of Theorem \ref{th:mainnr}]
Using Lemma \ref{le:do} and Corollary \ref{corTh14RS-nr}, the analogous of the compactness results in Subsection \ref{cacca} holds for $\cd_X(\RQ)$ under the stronger assumption at infinity coming from \eqref{g3'}.
Therefore, the arguments are similar to those in Subsections \ref{p+} and \ref{p0}  or Subsection \ref{p00}  adapted to $\cd_X(\RQ)$.
\end{proof}

\section{Open problems and related remarks}\label{4}

\textbf{Question:} can we extend Lemma \ref{le:radiallemma} to \textit{all} the functions in $\cd(\RQ)$? That is, does $C>0$ exist such that
\begin{equation*}
\int_{\R^4}\left(e^{32\pi^2 u^2}-1-32\pi^2 u^2\right)dx\le C
\end{equation*}
for every $u\in \cd(\RQ)$ with $\|u\|_{\cd}\le 1$?

A version of Lemma \ref{le:radiallemma} valid in all of $\cd(\RQ)$ but where the constant $32\pi^2$ is replaced with $1$, which is still sufficient for $I$ to be of class $C^1$ in $\cd(\RQ)$, is satisfied provided the following condition holds:
\begin{enumerate}[label=(q\arabic{*}),ref=q\arabic{*}]
	\item \label{q1} 
	$\displaystyle \sum_{k=2}^{+\infty} \frac{C_{2k}^{2k}}{k!} < +\infty$,
\end{enumerate}
where, for $p\ge4$, $C_p>0$ is the best constant in the inequality of Gagliardo--Nirenberg-type
\[
\|u\|_p \le C_p \|\Delta u\|_2^{1-4/p} \|u\|_4^{4/p} \quad \text{for every } u \in \cd(\RQ).
\]
Note that we \textit{do not} need the formula above to hold for every $u \in \{v \in L^4(\RQ) : \Delta v \in L^2(\RQ)\} \supset \cd(\RQ)$ as it would be for the classical Gagliardo--Nirenberg inequality.

\begin{theorem}\label{th:q1}
If \eqref{q1} holds, then there exists $C>0$ such that for every $M>0$ and every $u \in \cd(\RQ)$ with $\max\{\|u\|_4,\|\Delta u\|_2\} \le M$
\[
\int_{\RQ} \left(e^{u^2/M^2} - 1 - \frac{u^2}{M^2}\right) \, dx \le C.
\]
\end{theorem}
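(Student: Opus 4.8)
The plan is to expand the exponential in its power series and integrate term by term. Writing $t := u^2/M^2 \ge 0$, we have $e^t - 1 - t = \sum_{k=2}^{+\infty} t^k/k!$ with all summands nonnegative, so the monotone convergence theorem (applied to the increasing sequence of partial sums) yields
\[
\int_{\RQ}\left(e^{u^2/M^2} - 1 - \frac{u^2}{M^2}\right)dx = \sum_{k=2}^{+\infty}\frac{1}{k!\,M^{2k}}\,\|u\|_{2k}^{2k}.
\]
Each term on the right is finite because $\cd(\RQ)\hookrightarrow L^{2k}(\RQ)$ for every $k\ge2$ by Corollary \ref{coemb}, so no integrability issue arises.

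Next, for each fixed $k\ge2$ one applies the Gagliardo--Nirenberg-type inequality with exponent $p = 2k \ge 4$, which after raising both sides to the power $2k$ reads
\[
\|u\|_{2k}^{2k}\le C_{2k}^{2k}\,\|\Delta u\|_2^{2k-4}\,\|u\|_4^4.
\]
Using the hypothesis $\max\{\|u\|_4,\|\Delta u\|_2\}\le M$ and the fact that $2k-4\ge0$, the right-hand side is bounded above by $C_{2k}^{2k}\,M^{2k-4}\,M^4 = C_{2k}^{2k}\,M^{2k}$; hence $\|u\|_{2k}^{2k}/M^{2k}\le C_{2k}^{2k}$, with the powers of $M$ cancelling exactly.

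Substituting this bound into the series gives
\[
\int_{\RQ}\left(e^{u^2/M^2} - 1 - \frac{u^2}{M^2}\right)dx\le\sum_{k=2}^{+\infty}\frac{C_{2k}^{2k}}{k!},
\]
and the right-hand side is a finite constant, independent of both $u$ and $M$, precisely by assumption \eqref{q1}. Calling this constant $C$ concludes the proof.

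There is no real obstacle here: the only point requiring a word of justification is the exchange of sum and integral, which is immediate from Tonelli/monotone convergence since the integrand is the pointwise limit of the increasing nonnegative partial sums $\sum_{k=2}^{K}(u^2/M^2)^k/k!$, and the finiteness of $\|u\|_{2k}$ for each $k$ is exactly Corollary \ref{coemb}(2). Everything else is bookkeeping of exponents, the key structural feature being that the scaling $M^{2k-4}\cdot M^4 = M^{2k}$ matches the normalization $M^{-2k}$ coming from the argument $u^2/M^2$ of the exponential.
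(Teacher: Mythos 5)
Your proof is correct and follows essentially the same route as the paper: expand the exponential in a power series, bound each term $\|u\|_{2k}^{2k}$ via the Gagliardo--Nirenberg-type inequality, and sum using \eqref{q1}. The only difference is organizational: the paper first normalizes by $\|\Delta u\|_2^2$ and then compares $M$ with $\|\Delta u\|_2$ through a pointwise inequality, whereas you absorb both bounds $\|\Delta u\|_2\le M$ and $\|u\|_4\le M$ directly term by term, which slightly streamlines the argument (and avoids the $u\ne0$ caveat) without changing its substance.
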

Notice that taking $M=1$ we obtain the version of Lemma \ref{le:radiallemma} mentioned before.
\begin{proof}
We can assume $u\ne0$. Observe preliminarily that
\[\begin{split}
\int_{\RQ} \left(e^{u^2/\|\Delta u\|_2^2} - 1 - \frac{u^2}{\|\Delta u\|_2^2}\right) \, dx & = \int_{\RQ} \sum_{k=2}^{+\infty} \frac1{k!} \frac{u^{2k}}{\|\Delta u\|_2^{2k}} \, dx
= \sum_{k=2}^{+\infty} \frac1{k!} \frac{\|u\|_{2k}^{2k}}{\|\Delta u\|_{2}^{2k}}
\\
&  \le \sum_{k=2}^{+\infty} \frac{C_{2k}^{2k}}{k!} \frac{\|u\|_4^4}{\|\Delta u\|_2^4} = C \frac{\|u\|_4^4}{\|\Delta u\|_2^4},
\end{split}\]
where $C := \sum_{k=2}^{+\infty} C_{2k}^{2k}/k! < \infty$. Next, since $\|\Delta u\|_2 \le M$,
\[\begin{split}
e^{u^2/M^2} - 1 - \frac{u^2}{M^2}
& = \sum_{k=2}^{+\infty} \frac{1}{k!} \frac{u^{2k}}{M^{2k}}
= \frac{1}{M^4} \sum_{k=2}^{+\infty} \frac{1}{k!}\frac{u^{2k}}{M^{2k-4}}
\le \frac{1}{M^4} \sum_{k=2}^{+\infty} \frac{1}{k!} \frac{u^{2k}}{\|\Delta u\|_2^{2k-4}}\\
& = \frac{\|\Delta u\|_2^4}{M^4} \sum_{k=2}^{+\infty} \frac{1}{k!} \frac{u^{2k}}{\|\Delta u\|_2^{2k}}
= \frac{\|\Delta u\|_2^4}{M^4} \left(e^\frac{u^2}{\|\Delta u\|_2^2} - 1 - \frac{u^2}{\|\Delta u\|_2^2}\right),
\end{split}\]
thus, since $\|u\|_4 \le M$,
\[\begin{split}
\int_{\RQ} \left(e^{u^2/M^2} - 1 - \frac{u^2}{M^2}\right) \, dx \le \frac{\|\Delta u\|_2^4}{M^4} \int_{\RQ} \left(e^{u^2/\|\Delta u\|_2^2} - 1 - \frac{u^2}{\|\Delta u\|_2^2}\right) \, dx \le C \frac{\|u\|_4^4}{M^4} \le C. \qedhere
\end{split}\]
\end{proof}

Arguing as in Corollary \ref{corTh14RS-zero}, but using Theorem \ref{th:q1} instead of Lemma  \ref{le:radiallemma}, we obtain the following result, which suffices to use the machinery of the previous sections.

\begin{corollary}
If \eqref{q1} holds, then for every $\s\ge4$, $\a>0$, and $M>0$ such that $\a M^2 < 1$ there exists $C>0$ such that for every $\tau \in \bigl(1,1/(\a M^2)\bigr]$ and every $u \in \cd(\RQ)$ with $\max\{\|u\|_4,\|\Delta u\|_2\} \le M$
\[
\int_{\RQ} |u|^\sigma \left(e^{\alpha u^2}-1-\a u^2\right)\,dx\le C\|u\|_\frac{\sigma\tau}{\tau-1}^\sigma.
\]
\end{corollary}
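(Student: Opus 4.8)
The plan is to mimic, essentially verbatim, the proof of Corollary \ref{corTh14RS-zero}, replacing the radial Adams-type estimate (Lemma \ref{le:radiallemma}) by the full-space version supplied by Theorem \ref{th:q1}. First I would record the elementary pointwise inequality already used in Corollaries \ref{corTh14RS-zero} and \ref{corTh14RS-nr}: for $s\ge0$ and $t\ge1$,
\[
\left(e^s-1-s\right)^t\le e^{st}-1-st.
\]
Then, for $u\in\cd(\RQ)$ with $\max\{\|u\|_4,\|\Delta u\|_2\}\le M$ and $\tau\in\bigl(1,1/(\alpha M^2)\bigr]$, I would apply H\"older's inequality with conjugate exponents $\tau/(\tau-1)$ and $\tau$ to the product $|u|^\sigma\cdot(e^{\alpha u^2}-1-\alpha u^2)$, obtaining
\[
\int_{\RQ}|u|^\sigma\left(e^{\alpha u^2}-1-\alpha u^2\right)dx\le\|u\|_{\frac{\sigma\tau}{\tau-1}}^\sigma\left(\int_{\RQ}\left(e^{\alpha u^2}-1-\alpha u^2\right)^\tau dx\right)^{1/\tau}\le\|u\|_{\frac{\sigma\tau}{\tau-1}}^\sigma\left(\int_{\RQ}\left(e^{\alpha\tau u^2}-1-\alpha\tau u^2\right)dx\right)^{1/\tau},
\]
where the last step uses the pointwise inequality above.

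It then remains to bound $\int_{\RQ}(e^{\alpha\tau u^2}-1-\alpha\tau u^2)\,dx$ by a constant independent of $u$. Since $\alpha\tau\le 1/M^2$ and $x\mapsto e^x-1-x$ is nondecreasing on $[0,+\infty)$, we have $e^{\alpha\tau u^2}-1-\alpha\tau u^2\le e^{u^2/M^2}-1-u^2/M^2$ pointwise, and the hypothesis $\max\{\|u\|_4,\|\Delta u\|_2\}\le M$ is exactly what is needed to invoke Theorem \ref{th:q1}, which (under \eqref{q1}) gives $\int_{\RQ}(e^{u^2/M^2}-1-u^2/M^2)\,dx\le C$ with $C$ depending only on the convergent series in \eqref{q1}, hence not on $u$, $M$, $\alpha$, or $\tau$. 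Combining the two displays yields the asserted estimate.

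Given the close analogy with Corollary \ref{corTh14RS-zero}, I do not expect a genuine obstacle here; the only points needing (minor) care are the monotonicity step that reduces the exponent $\alpha\tau$ to the threshold $1/M^2$ present in Theorem \ref{th:q1} — which is precisely why the range of $\tau$ is taken to be $\bigl(1,1/(\alpha M^2)\bigr]$ — and the fact that Theorem \ref{th:q1} requires simultaneous control of $\|u\|_4$ and $\|\Delta u\|_2$, which is exactly the content of the hypothesis $\max\{\|u\|_4,\|\Delta u\|_2\}\le M$.
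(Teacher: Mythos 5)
Your proof is correct and follows essentially the same route as the paper, which simply says to argue as in Corollary \ref{corTh14RS-zero} (pointwise inequality plus H\"older) with Theorem \ref{th:q1} replacing Lemma \ref{le:radiallemma}. The only cosmetic difference is that you absorb the condition $\alpha\tau\le 1/M^2$ via monotonicity of $e^x-1-x$ rather than normalizing $u$ as in the earlier corollaries, which is equally valid since the constant in Theorem \ref{th:q1} is uniform in $M$.
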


\section*{Acknowledgements}

\noindent The authors are members of GNAMPA (INdAM).\\
Pietro d'Avenia and Alessio Pomponio are partially supported by PRIN 2017JPCAPN {\em Qualitative and quantitative aspects of nonlinear PDEs} and by GNAMPA {\em Modelli EDP nello studio problemi della fisica moderna}.
Jacopo Schino is partially supported by the National Science Centre, Poland (Grant No. 2020/37/N/ST1/00795).

\end{document}